\renewcommand{\tocsection}[3]{%
  \indentlabel{\ignorespaces#1 \makebox[\widthof{00.}][r]{#2\@ifnotempty{#2}{.}}\quad}#3}
\renewcommand{\tocsubsection}[3]{%
  \indentlabel{\@ifnotempty{#2}{\ignorespaces#1 \makebox[\widthof{00.0.}][r]{\small{#2.}}\,\,  }}\small{#3}}
\theoremstyle{plain}
\newtheorem{thm}{Theorem}[section]
\newtheorem{lem}[thm]{Lemma}
\newtheorem{prop}[thm]{Proposition}
\newtheorem{cor}[thm]{Corollary}
\theoremstyle{definition}
\newtheorem{defn}[thm]{Definition}
\newtheorem{rem}[thm]{Remark}
\newtheorem{exa}[thm]{Example}
\theoremstyle{remark}
\newcommand{\bB}{\mathbb B}
\newcommand{\bC}{\mathbb C}
\newcommand{\bD}{\mathbb D}
\newcommand{\bN}{\mathbb N}
\newcommand{\bR}{\mathbb R}
\newcommand{\bT}{\mathbb T}
\newcommand{\cA}{\mathcal A}
\newcommand{\cD}{\mathcal D}
\newcommand{\cH}{\mathcal H}
\newcommand{\cI}{\mathcal I}
\newcommand{\cJ}{\mathcal J}
\newcommand{\cK}{\mathcal K}
\newcommand{\cL}{\mathcal L}
\newcommand{\cT}{\mathcal T}
\newcommand{\cU}{\mathcal U}
\newcommand{\cX}{\mathcal X}
\newcommand{\cZ}{\mathcal Z}
\newcommand{\kW}{\mathfrak W}
\DeclareMathOperator{\ran}{ran}
\DeclareMathOperator{\dist}{dist}
\DeclareMathOperator{\Mult}{Mult}
\DeclareMathOperator{\TSH}{TS(\Mult(\cH))}
\DeclareMathOperator{\HENH}{Hen(\Mult(\cH))}
\DeclareMathOperator{\HEN}{Hen}
\DeclareMathOperator{\TS}{TS}
\DeclareMathOperator{\CB}{CB}
\renewcommand{\Re}{\operatorname{Re}}
\newcommand{\tfa}{\text{ for all }}
\newcommand{\qand}{\quad\text{and}\quad}
\newcommand{\ol}[1]{\overline{#1}}
\newcommand{\ep}{\varepsilon}
\newcommand{\dlim}{\lim\limits}
\author{Kenneth R. Davidson}
\address{Pure Mathematics Department, 
University of Waterloo,
Waterloo, ON\ N2L 3G1, 
Canada}
\email{krdavids@uwaterloo.ca}
\author{Michael Hartz}
\address{	Fachrichtung Mathematik\\
Universit\"at des Saarlandes\\
66123
Saarbr\"ucken, 
Germany}
\email{hartz@math.uni-sb.de}
\title[Interpolation and duality]{Interpolation and duality\\ in algebras of multipliers on the ball}
\subjclass[2010]{Primary: 46E22; Secondary: 47L30, 47L50, 46J15}
\keywords{reproducing kernel Hilbert spaces, multiplier algebra, Henkin measure, totally null sets, peak interpolation, Pick-peak interpolation, zero sets}
\begin{document}

\begin{abstract}
We study the multiplier algebras $A(\mathcal{H})$ obtained as the closure of the polynomials on certain reproducing kernel Hilbert spaces $\mathcal{H}$ on the ball $\mathbb{B}_d$ of $\mathbb{C}^d$.
Our results apply, in particular, to the Drury--Arveson space, the Dirichlet space and the Hardy space on the ball.
We first obtain a complete description of the dual and second dual spaces of $A(\mathcal H)$ in terms of the complementary bands of Henkin and totally singular measures for $\operatorname{Mult}(\mathcal{H})$.
This is applied to obtain several definitive results in interpolation.
In particular, we establish a sharp peak interpolation result for compact $\operatorname{Mult}(\mathcal{H})$-totally null sets
as well as a Pick and peak interpolation theorem. Conversely, we show that a mere interpolation set
is $\operatorname{Mult}(\mathcal{H})$-totally null.
\end{abstract}

\maketitle


\section{Introduction}

Classical peak interpolation is concerned with finding a disc algebra function that solves an interpolation
problem on the boundary of the unit disc $\mathbb{D}$ in the complex plane. In this setting, the Rudin--Carleson theorem
\cite{Carleson57,Rudin56} says that given any compact set $E \subset \partial \bD$ of
linear Lebesgue measure $0$ and any continuous function $h \in C(E)$ with $\|h\|_\infty \le 1$,
there exists $f \in A(\bD)$ with $f \big|_{E} = h$ and $\|f\|_\infty \le \|h\|_\infty$.
Moreover, if $h$ is not identically zero, one may achieve that $|f(z)| < \|h\|_\infty$ for $z \in \ol{\bD} \setminus E$, explaining
the term peak interpolation. For a discussion of the Rudin--Carleson theorem,
we  refer the reader to \cite[Chapter II]{Gamelin69}.
For the ball algebra $A(\bB_d)$ in dimension $d\ge2$, the notion of Lebesgue measure zero is replaced by the smallness property 
of being a null set with respect to every representing measure for the origin, known as a totally null set.
A general result of Bishop \cite{Bishop62} (see \cite[Chapter 10]{Rudin08}) shows that one can obtain 
peak interpolation on any compact totally null set.

A different interpolation problem for the disc algebra is the classical Pick interpolation problem.
It is about finding a function in the disc algebra that solves an 
interpolation problem with interpolation nodes in the open unit disc. 
The solution is given by Pick's theorem \cite{Pick15}, which can be stated as follows.
Given interpolation nodes $z_1,\ldots,z_n \in \bD$ and targets $\lambda_1,\ldots,\lambda_n \in \bC$,
there exists a function $f$ belonging to the disc algebra $A(\bD)$ with 
 $f(z_i) = \lambda_i$ for $1 \le i \le n$ and $\|f\|_{\infty} \le 1$ if and only if the Pick matrix
\begin{equation}
  \label{eqn:Pick_matrix}
  \Big[ \frac{1 - \lambda_i \overline{\lambda_j}}{1 - z_i \overline{z_j}} \Big]
\end{equation}
is positive semidefinite. 
Pick's theorem, and a subsequent reformulation due to Sarason \cite{Sarason67}, have had a profound influence on operator theory.
This result has been extended to a large class of reproducing kernel Hilbert spaces;
see \cite{AM02}.

Recently, Izzo \cite{Izzo18} studied the problem of simultaneous Pick and peak interpolation in the context of uniform algebras;
in particular, this includes the disc algebra.
Given
Pick and peak interpolation data that are solvable individually,
it is not always possible to find one function of norm $1$ that solves
both problems simultaneously.
Indeed, if the Pick matrix \eqref{eqn:Pick_matrix} is positive and singular, then the Pick interpolation problem
has a unique solution; and generally this will differ from the boundary datum.
Nevertheless, the problem can be solved with an arbitrarily small increase in norm.

We will establish sharp analogues of both the peak interpolation results and the simultaneous Pick and peak interpolation result
for certain algebras of multipliers of a large class of reproducing kernel Hilbert spaces  on the ball.
There are added complications due to the fact that the multiplier norm is larger than the supremum norm.
Duality methods are the key to controlling the multiplier norm of the interpolating functions.
The duality approach to interpolation theorems is classical; see for instance \cite[Theorem 5.9]{Rudin91}.
Roughly speaking, the idea is that an interpolation theorem asserts that a particular restriction mapping is surjective (or even a quotient mapping). 
By duality, this is equivalent to saying that the adjoint mapping is bounded below (or even an isometry).

In \cite{CD16b}, Clou\^atre and the first author established a functional calculus for absolutely continuous row contractions.
The arguments were based on a duality theory for Drury-Arveson space developed in \cite{CD16}.
In \cite{BHM17}, Bickel, M\textsuperscript{c}Carthy and the second author established the analogous functional calculus for a broad range of reproducing kernel Hilbert spaces on the ball.
Their methods avoided the use of duality for these spaces. However, in the course of our work on the present paper, we began by considering  the Pick and peak interpolation problem proposed in \cite{CD18}.
The solution for Drury-Arveson space again required duality. We decided to develop the duality theory for the larger class of spaces studied in \cite{BHM17}.

A reproducing kernel Hilbert space on the Euclidean open unit ball $\bB_d \subset \mathbb{C}^d$ is said to be a \emph{regular unitarily invariant space} 
if its reproducing kernel is of the form
\begin{equation*}
  K(z,w) = \sum_{n=0}^\infty a_n \langle z,w \rangle^n,
\end{equation*}
where $a_0 =1$, $a_n > 0$ for all $n \in \bN$ and $\dlim_{n \to \infty} \frac{a_n}{a_{n+1}} = 1$.
Throughout this paper, we will assume that $d < \infty$.
Examples are the Hardy space on the disc $H^2(\bD)$ and on the ball $H^2(\bB_d)$,
the Drury--Arveson space $H^2_d$ and the Dirichlet space.
More discussion and examples can be found in Subsection \ref{ss:RKHS}.
If, in addition, $\cH$ satisfies a version of Pick's interpolation theorem, then we call it a regular unitarily invariant Pick space. 
This class includes the Hardy space $H^2(\bD)$, the Dirichlet space and the Drury--Arveson space $H^2_d$,
but not the Hardy space $H^2(\bB_d)$ for $d \ge 2$. A precise definition will also be given in Subsection \ref{ss:RKHS}.
Given a regular unitarily invariant space $\cH$, the polynomials are multipliers of $\cH$, so we may define
\begin{equation*}
  A(\cH) = \overline{\bC[z_1,\ldots,z_d]}^{\|\cdot\|} \subset \Mult(\cH).
\end{equation*}
If $\cH = H^2(\bD)$, then $A(\cH)$ is the disc algebra. More generally,
if $\cH$ is the Hardy space on $\bB_d$, then $A(\cH) = A(\bB_d)$, the ball algebra. We remark, however, that in
many cases of interest, such as the Dirichlet space and the Drury--Arveson space $H^2_d$ for $d \ge 2$,
$A(\cH)$ is not a uniform algebra, and the multiplier norm is not comparable to the supremum norm.
Since the multiplier norm dominates the supremum norm,
\begin{equation*}
  A(\cH) \subset A(\bB_d) \cap \Mult(\cH) \subset A(\bB_d) \cap \cH,
\end{equation*}
but these inclusions are often strict.

\begin{defn}
Let $\cH$ be a regular unitarily invariant space on $\bB_d$.
\begin{enumerate}
  \item 
A regular Borel measure $\mu$ on $\partial \bB_d$ is called \emph{$\Mult(\cH)$-Henkin} if the integration functional
\begin{equation*}
  \bC[z_1,\ldots,z_d] \to \bC, \quad p \mapsto \int_{\partial \bB_d} p \, d \mu,
\end{equation*}
extends to a weak-$*$ continuous functional on $\Mult(\cH)$.
\item
A Borel set $E \subset \partial \bB_d$ is called \emph{$\Mult(\cH)$-totally null} if $|\mu|(E) = 0$ for every $\Mult(\cH)$-Henkin measure $\mu$.
\item
A regular Borel measure $\mu$ on $\bB_d$ is called \emph{$\Mult(\cH)$-totally singular}
if $\mu$ is singular with respect to every $\Mult(\cH)$-Henkin measure. 
\end{enumerate}
\end{defn}

Let $\TSH \subset M(\partial \bB_d)$ be the space of all $\Mult(\cH)$-totally singular measures.
If $\cH = H^2(\bD)$, then the F.\ and M.\ Riesz theorem implies that a measure is $H^\infty(\bD)$-Henkin
if and only if it is absolutely continuous with respect to Lebesgue measure.
Hence a Borel set is $H^\infty(\bD)$-totally null if and only if it is a Lebesgue null set.
The totally singular measures are just the measures singular to Lebesgue measure.

If $\cH$ is the Hardy space on $\bB_d$, then the corresponding Henkin measures are classical Henkin measures, and theorems
of Henkin and Cole--Range imply that a measure is $H^\infty(\bB_d)$-Henkin if and only if
it is absolutely continuous with respect to some representing measure of the origin \cite[Chapter 9]{Rudin08}.
The measures singular to all representing measures are the classical totally singular measures.
The Glicksberg--K\"onig--Seever theorem and the theorems of Henkin and Cole--Range show that the dual of the ball algebra is given by
\begin{equation*}
  A(\bB_d)^* = H^\infty(\bB_d)_* \oplus_1 \TS(H^\infty(\mathbb{B}_d)),
\end{equation*}
where $H^\infty(\bB_d)_*$ is the standard predual of $H^\infty(\bB_d)$
and $\oplus_1$ denotes the direct sum of two normed spaces, equipped with the sum of the norms;
see also Subsection \ref{ss:os} for background on direct sums.
This material can be found in \cite[Chapter 9]{Rudin08}.

Henkin measures and totally null sets for the Drury--Arveson space were studied by Clou\^atre and the first author in \cite{CD16},
also in the context of peak interpolation.
They showed that for the Drury--Arveson space,
\begin{equation*}
  A(H^2_d)^* = \Mult(H^2_d)_* \oplus_1 \TS(\Mult(H^2_d)).
\end{equation*}
This theorem was proved using results from the theory of free semigroup algebras.

Our description of the dual space of $A(\cH)$ contains these three results as special cases.

\begin{thm}
  \label{thm:A_H_decomp_intro}
  Let $\cH$ be a regular unitarily invariant space on $\bB_d$. Then
  \begin{equation*}
    A(\cH)^* = \Mult(\cH)_* \oplus_1 \TSH.
  \end{equation*}
\end{thm}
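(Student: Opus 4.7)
I plan to construct the natural candidate map
\[
\Phi : \Mult(\cH)_* \oplus_1 \TSH \longrightarrow A(\cH)^*,
\qquad
\Phi(\psi,\mu)(f) = \psi(f) + \int_{\partial \bB_d} f \, d\mu,
\]
and to show it is an isometric bijection. Contractivity of $\Phi$ is immediate, since for $f \in A(\cH)$ the multiplier norm dominates the supremum norm on $\overline{\bB_d}$, so $|\Phi(\psi,\mu)(f)| \le (\|\psi\| + \|\mu\|)\|f\|_{\Mult(\cH)}$.

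For injectivity, suppose $\Phi(\psi,\mu) = 0$. Then on polynomials $p$ one has $\int p \, d\mu = -\psi(p)$, which exhibits $\mu$ as $\Mult(\cH)$-Henkin. Since $\mu \in \TSH$ is singular to every Henkin measure, including itself, $\mu = 0$. Then $\psi$ vanishes on polynomials, and by the weak-$*$ density of polynomials in $\Mult(\cH)$ (a standard feature of regular unitarily invariant spaces), $\psi = 0$.

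For surjectivity combined with the isometry, let $\phi \in A(\cH)^*$. I will apply Hahn--Banach to extend $\phi$ to a norm-preserving functional on $C(\overline{\bB_d})$, and by Riesz representation obtain some $\nu \in M(\overline{\bB_d})$ with $\|\nu\| = \|\phi\|$ realising the extension by integration. Split $\nu = \nu_i + \nu_b$ according to the disjoint supports $\bB_d$ and $\partial \bB_d$, so $\|\nu\| = \|\nu_i\| + \|\nu_b\|$, and then invoke the band decomposition $M(\partial \bB_d) = \HEN(\Mult(\cH)) \oplus_1 \TSH$ to write $\nu_b = \nu_H + \nu_s$ with $\|\nu_b\| = \|\nu_H\| + \|\nu_s\|$. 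The interior piece $\nu_i$ defines a weak-$*$ continuous functional $\widetilde\nu_i$ on $\Mult(\cH)$ by integration: weak-$*$ convergence of a bounded net in $\Mult(\cH)$ implies bounded pointwise convergence on $\bB_d$ (via weak-$*$ continuity of kernel evaluations), so dominated convergence applies, yielding $\|\widetilde\nu_i\| \le \|\nu_i\|$. The Henkin part $\nu_H$ produces, by definition, a weak-$*$ continuous extension $\widetilde\nu_H$ with $\|\widetilde\nu_H\| \le \|\nu_H\|$, using weak-$*$ density of polynomials and $|\int p \, d\nu_H| \le \|p\|_\infty \|\nu_H\| \le \|p\|_{\Mult}\|\nu_H\|$. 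Set $\psi := \widetilde\nu_i + \widetilde\nu_H \in \Mult(\cH)_*$ and $\mu := \nu_s \in \TSH$. Then $\Phi(\psi,\mu)$ and $\phi$ agree on polynomials, hence on all of $A(\cH)$ by density. The crucial norm estimate $\|\psi\| + \|\mu\| \le \|\nu_i\| + \|\nu_H\| + \|\nu_s\| = \|\nu\| = \|\phi\|$, combined with the contractivity $\|\phi\| = \|\Phi(\psi,\mu)\| \le \|\psi\| + \|\mu\|$, forces equality. This delivers surjectivity and the isometry simultaneously.

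The main technical input, which I expect is developed in a preliminary section, is the band decomposition $M(\partial \bB_d) = \HEN(\Mult(\cH)) \oplus_1 \TSH$, i.e.\ that $\HEN(\Mult(\cH))$ is a projection band in the Banach lattice $M(\partial \bB_d)$. One must show that being Henkin is preserved under absolute continuity, which is more subtle than the classical F.\ and M.\ Riesz arguments because the multiplier norm may strictly dominate the supremum norm. Granted this band structure and the weak-$*$ density of polynomials in $\Mult(\cH)$, the duality decomposition reduces to the Hahn--Banach and measure-theoretic argument outlined above.
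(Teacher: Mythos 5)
Your construction of the candidate map, the contractivity estimate, and the injectivity argument all match what is needed (and the injectivity argument is essentially the paper's). The gap is in the surjectivity/isometry step: you extend an arbitrary $\phi \in A(\cH)^*$ by Hahn--Banach to a functional on $C(\ol{\bB_d})$ and represent it by a measure $\nu$ with $\|\nu\| = \|\phi\|$. This requires $\phi$ to be bounded with respect to the \emph{supremum} norm on $A(\cH)$, whereas $\phi$ is only assumed bounded in the multiplier norm. In the cases this theorem is really about (Drury--Arveson space, the Dirichlet space, all $\cH_s(\bB_d)$ with $s\in[-1,0)$), $A(\cH)$ is not a uniform algebra and the multiplier norm is not comparable to the supremum norm, so a generic $\phi \in A(\cH)^*$ (for instance a vector functional $f \mapsto \langle M_f x, y\rangle$ coming from $\Mult(\cH)_*$) is simply not dominated by any multiple of $\|\cdot\|_\infty$ and admits no representing measure on $\ol{\bB_d}$ at all. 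Even when $\phi$ happens to be sup-norm bounded, the Hahn--Banach extension preserves the sup-norm dual norm, which is $\ge \|\phi\|_{A(\cH)^*}$ (since the multiplier-norm unit ball sits inside the sup-norm unit ball) and typically strictly larger; so the claimed identity $\|\nu\| = \|\phi\|$, on which your entire norm bookkeeping rests, fails, and you lose both surjectivity and the isometry. Your argument is essentially the classical proof for the ball algebra, where the two norms coincide; it does not survive the passage to general multiplier algebras, which is precisely the difficulty the theorem addresses.

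The paper circumvents this by staying inside operator algebra territory rather than measure theory on $\ol{\bB_d}$: a functional (in fact a matrix of functionals) of norm one is written as $f \mapsto W^*\pi(f)V$ for a $*$-representation $\pi$ of the Toeplitz algebra $C^*(A(\cH))$ via the Haagerup--Paulsen--Wittstock dilation theorem; the short exact sequence $0 \to K(\cH) \to C^*(A(\cH)) \to C(\partial\bB_d) \to 0$ splits $\pi$ into a multiple of the identity representation (producing the $\Mult(\cH)_*$ summand) and a representation of $C(\partial\bB_d)$, which is then decomposed into $\HEN(\Mult(\cH))$-absolutely continuous and $\TSH$-absolutely continuous parts by a band decomposition for representations. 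The band decomposition $M(\partial\bB_d) = \HEN(\Mult(\cH)) \oplus_1 \TSH$ that you flag as the ``main technical input'' is indeed proved in the paper, but it enters only at the level of representations of $C(\partial\bB_d)$; it cannot by itself substitute for the missing representation of $\phi$ by a measure. (A minor further point: your dominated-convergence argument for weak-$*$ continuity of the interior piece is stated for nets, where dominated convergence does not apply directly; this is fixable via Krein--Smulian and metrizability of the weak-$*$ topology on bounded sets, but it is moot given the main gap.)
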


A refinement of this result will be proved in Theorem \ref{thm:A_H_decomp}.
Our proof is ultimately dilation theoretic and related to the methods in \cite{BHM17}.

This leads to a nice description of the second dual as well:

\begin{thm}
  \label{thm:A_H_double_dual_intro}
  Let $\cH$ be a regular unitarily invariant space on $\bB_d$. Then there is an abelian von Neumann algebra $\kW_s$ so that
  \begin{equation*}
    A(\cH)^{**} = \Mult(\cH) \oplus_\infty \kW_s.
  \end{equation*}
\end{thm}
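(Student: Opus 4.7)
The plan is to deduce Theorem 1.3 essentially formally from Theorem 1.1. Dualizing the decomposition $A(\cH)^* = \Mult(\cH)_* \oplus_1 \TSH$, and using that the dual of an $\ell^1$-direct sum is the $\ell^\infty$-direct sum of the duals of the summands, one obtains
\begin{equation*}
  A(\cH)^{**} = \bigl(\Mult(\cH)_*\bigr)^* \oplus_\infty \bigl(\TSH\bigr)^*.
\end{equation*}
The first summand is canonically $\Mult(\cH)$ because $\Mult(\cH)$ is already a dual space whose predual is $\Mult(\cH)_*$; under the natural isometric embedding $A(\cH)\hookrightarrow A(\cH)^{**}$, this summand is the weak-$*$ closure of the polynomials in $A(\cH)^{**}$. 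Everything therefore reduces to proving that $\kW_s := (\TSH)^*$ is an abelian von Neumann algebra.

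Since $\TSH$ is a band in the AL-space $M(\partial \bB_d)$ of regular Borel measures on $\partial \bB_d$, it inherits an AL-space structure: the norm on its positive cone is additive. By Kakutani's representation theorem for AL-spaces — or, more concretely, by fixing via Zorn's lemma a maximal family $\{\mu_i\}_{i \in I}$ of pairwise singular positive totally singular measures — one obtains an isometric lattice isomorphism
\begin{equation*}
  \TSH \;\cong\; \bigoplus\nolimits^{\ell^1}_{i \in I} L^1(\mu_i).
\end{equation*}
Dualizing yields
\begin{equation*}
  \kW_s \;\cong\; \bigoplus\nolimits^{\ell^\infty}_{i \in I} L^\infty(\mu_i),
\end{equation*}
which is manifestly an abelian von Neumann algebra, being an $\ell^\infty$-direct product of the commutative von Neumann algebras $L^\infty(\mu_i)$.

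The only step requiring genuine care is the pairwise-singular decomposition of the band $\TSH$: one must verify that every element of $\TSH$ really does decompose as a (countable) sum $\sum_i f_i\,d\mu_i$ with $\sum_i \|f_i\|_{L^1(\mu_i)} = \|\mu\|$, and that this decomposition is unique. This is a standard Banach-lattice fact about bands in $M(X)$, but it is the one non-formal input. Once it is in hand, the identification of $\kW_s$ is immediate, and Theorem 1.3 follows with no further analytic input beyond Theorem 1.1.
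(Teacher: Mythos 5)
Your proposal is correct, and its first step coincides with the paper's: dualize $A(\cH)^* = \Mult(\cH)_* \oplus_1 \TSH$ and use $(\Mult(\cH)_*)^* = \Mult(\cH)$ to reduce everything to identifying $\TSH^*$. Where you diverge is in that identification. The paper realizes $\TSH^*$ operator-algebraically: it decomposes the universal enveloping von Neumann algebra $\kW = C(\partial\bB_d)^{**}$ into its Henkin and totally singular summands via Lemma \ref{lem:band_decom}, and shows (Goldstine plus the annihilator relations $\TSH^* = \HENH^\perp$, $\HENH^* = \TSH^\perp$) that $\TSH^*$ is exactly the singular summand $\kW_s$ of $\kW$. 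You instead use the Banach-lattice structure: the band $\TSH$ in the AL-space $M(\partial\bB_d)$ splits, via a maximal pairwise singular family $\{\mu_i\}$, as an $\ell^1$-sum of spaces $L^1(\mu_i)$, whence its dual is the $\ell^\infty$-product of the $L^\infty(\mu_i)$, an abelian von Neumann algebra. The band-decomposition fact you flag is indeed standard (it is the F.\ Riesz decomposition used repeatedly in the paper, applied to the complements of the $\mu_i$ within the band, exactly as in the proof of Lemma \ref{lem:band_decom}), so your argument is complete and arguably more elementary, avoiding universal representations. What the paper's route buys is a canonical, choice-free description of $\kW_s$ as a summand of $C(\partial\bB_d)^{**}$, the completely isometric version of the statement (used in Theorem \ref{T:second_dual}), and compatibility with the multiplicative structure of $A(\cH)^{**}$, which is exploited later (e.g.\ the element $0 \oplus I$ in the proof of Proposition \ref{P:FMRiesz}); your model of $\kW_s$ depends on the chosen maximal family and is only identified isometrically, which suffices for the statement as phrased.

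One small inaccuracy in an aside: the first summand $\Mult(\cH)$ is \emph{not} the weak-$*$ closure of the polynomials in $A(\cH)^{**}$. Since the polynomials are norm dense in $A(\cH)$, Goldstine's theorem shows that their canonical image is weak-$*$ dense in all of $A(\cH)^{**}$; indeed the canonical image of $f \in A(\cH)$ has a nonzero component in $\TSH^*$ in general. This remark is not used in your argument, so nothing else is affected.
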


Here, $\oplus_\infty$ denotes the direct sum of two normed spaces, equipped with the maximum
of the two norms; see also Subsection \ref{ss:os}.
A more refined version of this result, including a precise description of $\kW_s$, is found in Theorem~\ref{T:second_dual}.

We now turn to our interpolation results.

In the theory of uniform algebras, the notion of a peak interpolation set is stronger than being an interpolation set.
That is, given a function algebra $A \subset C(X)$, a closed subset $E \subset X$ is an interpolation set if for
every $h\in C(E)$, there is an element $f\in A$ so that $f|_E = h$. 
When this happens, the open mapping theorem yields some norm control. 
It is a peak interpolation set if in addition, one can arrange that $|f(x)| < \|h\|_\infty$ for all $x \in X \setminus E$,
provided that $h$ is not identically zero.
It is a peak set if there is a function $g\in A$ so that $g|_K=1$ and $|g(x)| < 1$ for all $x \in X \setminus E$.
It is routine in the uniform algebra context to show that a set which is both a peak set and an interpolation set is a peak interpolation set \cite[Lemma 20.1]{Stout71}.

In the special case of the ball algebra, Rudin \cite[Chapter 10]{Rudin08} explains that these three notions coincide
and are also equivalent to being totally null and to being the zero set of a function in $A(\bB_d)$.
In the case of Drury-Arveson space, it was shown  with considerable effort in \cite[Theorem 9.5]{CD16} that a
closed $\Mult(H^2_d)$-totally null set $E$ admits peak interpolation in weaker sense.
This provided the classical pointwise inequality but required $\|f\|_{\Mult(H^2_d)} \le (1+\ep) \|g\|_\infty$ for some $\ep>0$.
We obtain a sharper version of Bishop's theorem that compact $\Mult(\cH)$-totally null sets are peak interpolation sets in our setting.
The sharp norm control is obtained by using the theory of $M$-ideals.

\begin{thm}  \label{thm:Bishop_intro}
Let $\cH$ be a regular unitarily invariant space on $\bB_d$ and let $E \subset \partial \bB_d$ be compact and $\Mult(\cH)$-totally null. 
Let $g \in C(E)$ be not identically zero. 
Then there exists an $f \in A(\cH)$ with
 \begin{enumerate}[label=\normalfont{(\arabic*)}]
  \item $f \big|_E = g$,
  \item $|f(z)| < \|g\|_\infty$ for every $z \in \ol{\bB_d} \setminus E$, and
  \item $\|f\|_{\Mult(\cH)} =  \|g\|_\infty$.
 \end{enumerate}
\end{thm}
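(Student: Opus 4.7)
The plan is to identify $J_E := \{f \in A(\cH) : f|_E = 0\}$ as an M-ideal of $A(\cH)$ by exploiting Theorem~\ref{thm:A_H_decomp_intro}, use the Alfsen--Effros proximinality theorem to produce a norm-preserving interpolant, and then multiply by a peak function for $E$ to gain the strict inequality off $E$. The main obstacle will be the M-ideal identification.

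To set up the M-ideal, I refine Theorem~\ref{thm:A_H_decomp_intro} using the fact that $E$ is $\Mult(\cH)$-totally null, which gives $M(E) \subset \TSH$. Since $\TSH$ is a band in $M(\partial \bB_d)$, restriction to $E$ and to its complement yields an isometric splitting $\TSH = M(E) \oplus_1 \TSH_{E^c}$, where $\TSH_{E^c} = \{\mu \in \TSH : |\mu|(E) = 0\}$. Combined with Theorem~\ref{thm:A_H_decomp_intro}, this upgrades the decomposition to the threefold L-direct sum
\[
  A(\cH)^* = \Mult(\cH)_* \oplus_1 M(E) \oplus_1 \TSH_{E^c}.
\]
The projection onto the middle summand is an L-projection whose pre-annihilator in $A(\cH)$ is exactly $J_E$ (since $\int_E f\,d\mu = 0$ for all $\mu \in M(E)$ iff $f|_E = 0$), and whose range $M(E)$ is weak-$*$ closed in $A(\cH)^*$ by Krein--Smulian applied to the adjoint of restriction. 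Hence $J_E^\perp = M(E)$, so $J_E$ is an M-ideal in $A(\cH)$, and via the canonical identifications $(A(\cH)/J_E)^* = M(E) = C(E)^*$ the restriction map $A(\cH)/J_E \to C(E)$ is an isometric isomorphism.

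The Alfsen--Effros theorem now supplies proximinality of the M-ideal $J_E$: for any $g \in C(E)$ there exists $f_0 \in A(\cH)$ with $f_0|_E = g$ and $\|f_0\|_{\Mult(\cH)} = \|g\|_{C(E)}$. Applied to $g \equiv 1$ this provides a first-attempt peak function, which I refine via a Rudin-type construction (cf.~\cite[Chapter~10]{Rudin08})---interpolating on totally null enlargements $E \cup \{z\}$, passing to a countable subcover of $\overline{\bB_d} \setminus E$ by Lindel\"of, and summing with weights $2^{-n}$---to obtain a genuine peak function $h \in A(\cH)$ with $h|_E = 1$, $\|h\|_{\Mult(\cH)} = 1$, and $|h(z)| < 1$ for every $z \in \overline{\bB_d} \setminus E$. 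Setting $f := f_0 \cdot h \in A(\cH)$ then gives conclusion (1) from $f|_E = g$; conclusion (3) from the bounds $\|f\|_{\Mult(\cH)} \le \|f_0\|\|h\| = \|g\|_\infty$ and $\|f\|_{\Mult(\cH)} \ge \|f|_E\|_\infty = \|g\|_\infty$; and conclusion (2) on $\overline{\bB_d} \setminus E$ from $|f(z)| = |f_0(z)||h(z)| < |f_0(z)| \le \|g\|_\infty$ whenever $f_0(z) \ne 0$, and from $|f(z)| = 0 < \|g\|_\infty$ (using $g \not\equiv 0$) whenever $f_0(z) = 0$.
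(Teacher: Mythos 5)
Your proposal is correct and follows essentially the same route as the paper's proof of Theorem \ref{thm:Bishop}: the refined $\oplus_1$ decomposition of $A(\cH)^*$ (splitting $\TSH$ into measures on $E$ and on its complement) exhibiting $\cI(E)$ as a (complete) $M$-ideal, proximinality of $M$-ideals to obtain a norm-exact interpolant, and a peak function built by summing interpolants on $E \cup \{z\}$ with weights $2^{-k}$. The only slips are cosmetic: the countable cover should be taken in $\partial \bB_d \setminus E$, with the strict inequality on $\ol{\bB_d} \setminus E$ then following from the maximum modulus principle, and one should note explicitly that $E \cup \{z\}$ is totally null because $E \neq \emptyset$ together with unitary invariance makes every singleton of $\partial \bB_d$ totally null.
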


A slight improvement of this result, which also applies to matrices of multipliers,
will be proved in Theorem \ref{thm:Bishop}. In Theorem \ref{thm:Bishop_linear}, we will
show that there even exists a linear operator of peak interpolation, meaning $f$ can be chosen to depend linearly on $g$.

We mention that a somewhat different boundary interpolation result for Besov--Sobolev spaces on the ball was previously
obtained by Cohn and Verbitsky; see \cite[Theorem 3]{CV95} and also the references therein. They consider interpolation
in the larger space $\mathcal{H} \cap A(\mathbb{B}_d)$, but impose a capacitary condition on the interpolation set.
Peak interpolation theorems in a non-commutative setting were proved by Blecher, Hay and Read. 
See for instance \cite{Blecher13} for an overview. While these results are in a similar spirit to Theorem \ref{thm:Bishop_intro},
it seems that we need to develop our full theory here before we can fit Theorem \ref{thm:Bishop_intro} into their framework.

Before obtaining the sharp peak interpolation result, we first establish simultaneous Pick and peak interpolation.
The reason for doing this first is that for the special case of empty Pick component, one obtains important 
information that is a step towards the sharp peak interpolation result.

\begin{thm}
  \label{thm:pick_peak_intro}
    Let $\cH$ be a regular unitarily invariant Pick space on $\bB_d$ with kernel $K$.
    Let $F = \{z_1,\ldots,z_n \} \subset \bB_d$ and $\lambda_1,\ldots,\lambda_n \in \bC$ with
    \begin{equation*}
      \big[ K(z_i,z_j) ( 1 - \lambda_i \overline{\lambda_j}) \big] \ge 0.
    \end{equation*}
    Let $E \subset \partial \bB_d$ be compact and $\Mult(\cH)$-totally null, and let $h \in C(E)$ with $\|h\|_\infty \le 1$.
    Then for each $\varepsilon > 0$, there exists $f \in A(\cH)$ with
  \begin{enumerate}[label=\normalfont{(\arabic*)}]
    \item $f(z_i) = \lambda_i$ for $1 \le i \le n$,
    \item $f |_E = h$, and
    \item $\|f\|_{\Mult(\cH)} \le 1 + \varepsilon$.
  \end{enumerate}
\end{thm}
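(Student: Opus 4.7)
The proof will proceed by a clean duality argument, with Theorem \ref{thm:A_H_decomp_intro} as the principal input. By the Pick property of $\cH$, the matrix condition on $(\lambda_1,\ldots,\lambda_n)$ is equivalent to the existence of a multiplier $g \in \Mult(\cH)$ with $g(z_i) = \lambda_i$ and $\|g\|_{\Mult(\cH)} \le 1$. Reformulating, let $X$ be $\bC^n$ equipped with the quotient norm inherited from $\Mult(\cH)$ via the evaluation map $\pi(g) = (g(z_1),\ldots,g(z_n))$; then the Pick condition says exactly that $(\lambda_1,\ldots,\lambda_n)$ lies in the closed unit ball of $X$. Combined with $\|h\|_\infty \le 1$, the datum $((\lambda_i),h)$ sits in the closed unit ball of the Banach space $X \oplus_\infty C(E)$. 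The contractive restriction map
\[
  T\colon A(\cH) \to X \oplus_\infty C(E), \qquad T(f) = \bigl((f(z_i))_{i=1}^n,\, f|_E\bigr)
\]
is the central object, and the theorem amounts to saying that every element of the closed unit ball of the target admits a preimage under $T$ of norm at most $1+\varepsilon$.

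\textbf{Adjoint and isometry.} I will establish that $T^*$ is an isometric embedding. The dual of $X \oplus_\infty C(E)$ is $X^* \oplus_1 M(E)$. Since $X$ is finite-dimensional and $\pi$ is surjective, $X^*$ is identified with $\operatorname{span}\{\delta_{z_1},\ldots,\delta_{z_n}\}$ inside $\Mult(\cH)^*$, equipped with the dual of the Pick quotient norm. Evaluations at interior points are weak-$*$ continuous on $\Mult(\cH)$, so this span sits isometrically inside the predual $\Mult(\cH)_*$. The adjoint reads
\[
  T^*\Bigl(\sum_i c_i \delta_{z_i},\, \mu\Bigr) = \sum_i c_i \delta_{z_i} + L_\mu \qquad \text{in } A(\cH)^*,
\]
where $L_\mu(f) = \int f\,d\mu$. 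Now apply Theorem \ref{thm:A_H_decomp_intro}: the first summand belongs to the $\Mult(\cH)_*$ part of $A(\cH)^* = \Mult(\cH)_* \oplus_1 \TSH$, while the second, because $\mu$ is supported on the $\Mult(\cH)$-totally null set $E$, belongs to the $\TSH$ part. The $\ell^1$ direct sum structure gives
\[
  \Bigl\|T^*\Bigl(\sum_i c_i \delta_{z_i},\mu\Bigr)\Bigr\|_{A(\cH)^*}
  = \Bigl\|\sum_i c_i \delta_{z_i}\Bigr\|_{\Mult(\cH)_*} + \|\mu\|_{M(E)},
\]
which is precisely the norm on $X^* \oplus_1 M(E)$. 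Hence $T^*$ is isometric.

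\textbf{Lifting and main obstacle.} Isometry of $T^*$ implies, via the Hahn--Banach separation theorem, that $T(B_{A(\cH)})$ is norm-dense in the closed unit ball of $X \oplus_\infty C(E)$. A standard geometric-series iteration now converts density into an exact lifting: given a target element of norm at most $1$ and given $\varepsilon > 0$, successive approximation produces a convergent sum $f = \sum_{k} f_k \in A(\cH)$ with $T(f)$ equal to the prescribed datum and $\|f\|_{\Mult(\cH)} \le 1 + \varepsilon$. Applying this machinery to $((\lambda_i),h)$ yields the desired interpolant. The principal difficulty is thus entirely absorbed by Theorem \ref{thm:A_H_decomp_intro}: the sharp additivity of norms between the $\Mult(\cH)_*$- and $\TSH$-parts is what prevents any cancellation between the Pick data (supported inside $\bB_d$) and the peak data (supported on the totally null $E \subset \partial \bB_d$), allowing the two interpolation problems to decouple cleanly in the dual. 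Once this decoupling is in hand, the $\varepsilon$ is unavoidable but harmless, arising only from the gap between norm-density of $T(B_{A(\cH)})$ in $B_Y$ and exact containment, which is the familiar price of lifting through a non-surjective contraction.
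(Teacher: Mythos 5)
Your argument is correct and is essentially the paper's own proof: the paper likewise shows the restriction map $A(\cH)\to\Mult(\cH)\big|_F\oplus_\infty C(E)$ is a quotient map by verifying that its adjoint is isometric on $(\Mult(\cH)\big|_F)^*\oplus_1 M(E)$, using Theorem \ref{thm:A_H_decomp_intro} for the additivity between the Henkin part (the span of the $\delta_{z_i}$) and the totally singular part ($M(E)$), and then invokes the Pick property to realize the interior data by a contractive multiplier. The only differences are cosmetic: the paper works at the operator space level (complete quotient/complete isometry, citing \cite{BL04}) to cover matrix-valued targets, whereas you argue at the Banach space level and spell out the Hahn--Banach density plus successive-approximation lifting, which suffices for the scalar statement.
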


Our result shows that the restriction map to the set $F\cup E$ is a complete quotient map of $A(\cH)$ onto $\Mult(\cH)|_F \oplus_\infty C(E)$.
Theorem~\ref{thm:pick_peak_intro} will be obtained in Corollary \ref{cor:pick_peak}.
Taking $\cH = H^2(\bD)$, we recover Izzo's theorem in the case of the disc algebra. 
The case of the Drury--Arveson space and of a single point in $\bB_d$ (i.e.\ $n=1$) was established
by Clou\^atre and the first author in \cite[Corollary 3.8]{CD18}.
It turns out that the Pick property of $\cH$ is only needed to have a concise criterion for the solvability of the Pick problem. 
In Theorem \ref{thm:pick_peak_general}, we will provide a more general result that does not require the Pick property and for instance
also applies to the Hardy space on the ball.
Our proof is different from Izzo's proof, as $A(\cH)$ is typically not a uniform algebra in our setting.

In Section~\ref{S:ideals}, we establish a few results about ideals inspired by \cite{CD18}.
In particular, we provide an analogue of a theorem of Rudin and Carleson \cite{Carleson57, Rudin57} describing ideals of the disk algebra.
This was generalized in a somewhat less precise way for the ball algebra by Hedenmalm \cite{Hedenmalm89}
and by Clou\^atre and the first author \cite{CD18} for multipliers on Drury-Arveson space.
Our result is in the same spirit. We let $Z(\cJ)$ denote the set of common zeros of functions in $\cJ$.
If $E$ is a closed subset of $\overline{\mathbb{B}_d}$, let $\mathcal{I}(E) = \{f \in A(\mathcal{H}): f \big|_E = 0 \}$.

\begin{thm}\label{T:ideals_intro}
Let $\cH$ be a regular unitarily invariant space on $\bB_d$, and let $\cJ$ be a closed ideal in $A(\cH)$.
Let $E = Z(\cJ) \cap \partial\bB_d$, and let $\tilde\cJ$ be the weak-$*$ closure of $\cJ$ in $\Mult(\cH)$.
Then
\[ \cJ = \tilde\cJ \cap \cI(E) .\]
\end{thm}

After establishing our peak interpolation theorem, we investigate when there are non-empty $\Mult(\cH)$-totally null sets.
We show that either singleton sets on the boundary are not totally null, in which case all multipliers extend to be continuous on the closed ball,
or boundary points are totally null and there are interpolating sequences for $\Mult(\cH)$. 
When the kernel is bounded (i.e. $\sum_{n\ge0} a_n < \infty$), it is easy to see that the first case applies.
We construct an unbounded kernel with no totally null sets as well.

In Section~\ref{S:interpolation sets}, we establish a very strong converse to our various interpolation theorems.

\begin{thm}\label{T:interpolation set_intro}
Let $\cH$ be a regular unitarily invariant space and suppose that there exist non-empty $\Mult(\cH)$-totally null sets.
Let $E \subset \partial \bB_d$ be a compact set.
If the restriction map from $A(\cH)$ into $C(E)$ is surjective, then $E$ is $\Mult(\cH)$-totally null.
\end{thm}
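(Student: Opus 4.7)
The plan is to argue by contrapositive: assuming $R\colon A(\cH)\to C(E)$ is surjective, I would show that $|\mu|(E)=0$ for every $\Mult(\cH)$-Henkin measure $\mu$. The argument breaks into two steps: (1) produce a sharp peaking function for $E$ in $A(\cH)$, and (2) deduce totally nullness from it.

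\emph{Step 1 (a sharp Bishop-type lemma).} I would show that the surjectivity of $R$, combined with the hypothesis that non-empty totally null sets exist, forces the existence of $f\in A(\cH)$ with $f|_E=1$, $|f(z)|<1$ for every $z\in\ol{\bB_d}\setminus E$, and $\|f\|_{\Mult(\cH)}=1$. Two preliminary observations: first, $E$ is a proper subset of $\partial\bB_d$, since $A(\cH)\subset A(\bB_d)$ and $A(\bB_d)|_{\partial\bB_d}$ is a proper subspace of $C(\partial\bB_d)$; pick $z_0\in\partial\bB_d\setminus E$. Second, the assumption that non-empty $\Mult(\cH)$-totally null sets exist puts us in the regime where boundary singletons are themselves $\Mult(\cH)$-totally null (otherwise every point evaluation on $\partial\bB_d$ would be weak-$*$ continuous and no non-empty set could be totally null). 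In particular, Theorem~\ref{thm:Bishop_intro} furnishes multiplier-norm-one peak functions at each boundary point outside $E$. These single-point peaks combine, via an iteration in the spirit of Bishop's uniform-algebra lemma, with the lifts produced by surjectivity of $R$ to yield $f$ with multiplier norm equal to $1$.

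\emph{Step 2 (peak implies totally null).} Given such $f$, the powers $f^n$ form a bounded sequence in $\Mult(\cH)$ and converge pointwise to zero on $\bB_d$ (where $|f|<1$); being bounded and pointwise convergent on $\bB_d$, they converge weak-$*$ to $0$ in $\Mult(\cH)$. For any $\Mult(\cH)$-Henkin measure $\mu$, the integration functional is weak-$*$ continuous, so $\int f^n\,d\mu\to 0$. On the other hand,
\[
  \int f^n\,d\mu \;=\;\mu(E)\;+\;\int_{\partial\bB_d\setminus E} f^n\,d\mu,
\]
and the last integral tends to zero by dominated convergence, since $|f|<1$ strictly off $E$ and $|f^n|\le 1$ on $\partial\bB_d$. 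Hence $\mu(E)=0$; applying this to the Henkin measure $|\mu|$ (Henkin measures form a band) gives $|\mu|(E)=0$, as required.

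The main obstacle is Step 1. The classical Bishop lemma for uniform algebras would give a peaking function of \emph{supremum} norm one, which in our setting is in any case forced by the maximum modulus principle. The real difficulty is to control the \emph{multiplier} norm throughout the iteration, not merely the supremum norm; the $M$-ideal theory underlying the sharp peak interpolation of Theorem~\ref{thm:Bishop_intro} is what makes this control possible, but the combinatorics of the iteration, weaving together the single-point peaks outside $E$ with the lifts produced by $R$, is the technical heart of the argument.
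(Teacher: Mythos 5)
Your Step 2 is correct --- it is exactly the (P) $\Rightarrow$ (TN) argument appearing in Theorem \ref{T:equivalence} --- but Step 1 is a genuine gap, and it is where the entire content of the theorem lies. What you need there is $f \in A(\cH)$ with $f|_E = 1$, $|f| < 1$ off $E$, and $\|f\|_{\Mult(\cH)} = 1$; by Theorem \ref{T:equivalence}, the existence of such an $f$ (i.e.\ $E$ being a peak set) is \emph{equivalent} to $E$ being $\Mult(\cH)$-totally null, so Step 1 asserts the conclusion in disguise. The difficulty is the one you flag but do not resolve: surjectivity of $R$ only supplies lifts with multiplier norm bounded by the open-mapping constant $C$, and a Bishop-type iteration controls the supremum norm (which is in any case forced to equal $1$ by the maximum principle) but offers no mechanism for driving the \emph{multiplier} norm down to exactly $1$. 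The sharp norm control in Theorem \ref{thm:Bishop} comes from $M$-ideal theory, and the proof that $\cI(E)$ is an $M$-ideal rests on $\cI(E)^\perp = \TS(E)$, which already uses that $E$ is totally null --- so invoking that machinery here is circular. Note also that your Step 2 genuinely requires norm exactly $1$: if one only had $\|f\|_{\Mult(\cH)} \le C$ with $C > 1$, the powers $f^n$ could have multiplier norm as large as $C^n$ and need not tend to zero weak-$*$, so weakening Step 1 to a norm-$C$ peaking function would not rescue the argument.

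The paper's proof avoids constructing a peak function for $E$ altogether, adapting Varopoulos' argument for the ball algebra with matrices of multipliers replacing the roots-of-unity trick. Given a positive Henkin measure $\mu$ concentrated on $E$, $\varepsilon > 0$ and a compact $K \subset \bB_d$, one takes finitely many multiplier-norm-one peak functions $f_i$ at points $z_i \in E$ (available from Theorem \ref{thm:Bishop} because singletons are totally null), raised to high powers so that $|f_i| \le \varepsilon$ on $K$, together with disjoint compact sets $E_i \subset E \cap \{|1 - f_i| < \varepsilon\}$ covering $E$ up to $\mu$-measure $\varepsilon$. \emph{Complete} surjectivity of $R$ (Lemma \ref{lem:completely_surjective}) then yields a row $[g_1, \dots, g_n]$ and a column of $h_i$'s, each of norm at most $C$, with $g_i = h_i = \delta_{ij}$ on $E_j$, and $f = \sum_i g_i f_i h_i$ satisfies $\|f\|_{\Mult(\cH)} \le C^2$, $|f| \le \varepsilon C^2$ on $K$, and $|1 - f| < \varepsilon$ off a set of $\mu$-measure less than $\varepsilon$. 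Running over $\varepsilon_n \downarrow 0$ and $K = r_n \ol{\bB_d}$ produces a bounded sequence tending to $0$ weak-$*$ with $\int f_n \, d\mu \to \mu(E)$, whence $\mu(E) = 0$. The key point is that a uniform bound $C^2$ suffices because no powers are ever taken; this is precisely how the argument sidesteps the sharp norm control your route cannot obtain.
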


We also show that if there are no non-empty $\Mult(\mathcal{H})$ totally null sets, then there are no infinite
compact interpolation sets; see Proposition \ref{prop:no_interpolation}.

In Section~\ref{S:zero_sets}, we answer a question from \cite{CD18} about zero sets of $A(H^2_d)$.
It is shown there that every closed totally null subset of $\partial \bB_d$ is the zero set of a function in $A(H^2_d)$.
This is also the case in our setting. 
It was asked whether the converse was true. 
The second author showed in \cite{Hartz17} that there are $\Mult(H^2_d)$-Henkin measures which are not Henkin measures in the classical sense. 
This is used to demonstrate that there are zero sets of $A(H^2_d)$ which are not $\Mult(H^2_d)$-totally null.

Finally in the last section, we show that the following properties coincide on these spaces.
We say that $E \subset \partial \bB_d$ is a peak set if there is a function $f\in A(\cH)$ such that $f|_E = 1 = \|f\|_{\Mult(\mathcal{H})}$ and $|f(z)|<1$ on $\ol{\bB_d}\setminus E$.

\begin{thm} \label{T:equivalence_intro}
Let $\cH$ be a regular unitarily invariant space on $\bB_d$, and let $E \subset \partial \bB_d$  be compact. 
The following are equivalent:
  \begin{enumerate}
    \item[\normalfont{(TN)}] $E$ is $\Mult(\cH)$-totally null.
    \item[\normalfont{(PI)}] $E$ is a peak interpolation set.
    \item[\normalfont{(P)}] $E$ is a peak set.
    \item[\normalfont{(PPI)}] $E$ is a Pick-peak interpolation set.
  \end{enumerate}
Moreover these properties imply the corresponding complete versions of {\normalfont{(PI)}} and {\normalfont{(PPI)}} for matrix valued functions.
Furthermore, if there exist non-empty $\Mult(\cH)$-totally null sets, then this is also equivalent to
  \begin{enumerate}
    \item[\normalfont{(I)}] $E$ is an interpolation set.
  \end{enumerate}
\end{thm}

\pagebreak[3]
\section{Preliminaries}

\subsection{Reproducing kernel Hilbert spaces on \texorpdfstring{$\bB_d$}{Bd}}
\label{ss:RKHS}

We refer the reader to the books \cite{AM02} and \cite{PR16} for background on reproducing kernel
Hilbert spaces.
Let $\cH$ be a reproducing kernel Hilbert space on $\bB_d$, where $d \in \bN$, and
let $K$ denote the reproducing kernel of $\cH$.
We say that $\cH$ is unitarily invariant if $K$ is of the form
\begin{equation*}
  K(z,w) = \sum_{n=0}^\infty a_n \langle z,w \rangle^n,
\end{equation*}
where $a_0 =1$ and $a_n > 0$ for all $n \in \bN$.
If in addition, $\dlim_{n \to \infty} \frac{a_n}{a_{n+1}} =1$,
then we call $\cH$ a regular unitarily invariant space.
We think of this condition as a regularity condition because
of the following principle.
If $\cH$ is a reproducing kernel Hilbert space on $\bB_d$, it is natural
to assume that the radius of convergence of the power series
$\sum_{n=0}^\infty a_n t^n$ is $1$. In this case, if the limit $\dlim_{n \to \infty} \frac{a_n}{a_{n+1}}$
exists, then it equals $1$.

The class of regular unitarily invariant spaces is a frequently studied class of Hilbert function
spaces on the ball. It includes in particular the classical Hardy space $H^2(\mathbb{D})$, 
the Dirichlet space, the Bergman space, their counterparts on the ball, as well
as the Drury--Arveson space $H^2_d$, which plays a key role in multivariable operator theory \cite{Arveson98,Drury78}. More generally, for each
$a \in (0,\infty)$, the reproducing kernel Hilbert space $\mathcal{K}_a(\mathbb{B}_d)$ with kernel
\begin{equation*}
  \frac{1}{(1 - \langle z,w \rangle)^a}
\end{equation*}
belongs to this class. The Drury--Arveson space is obtained at $a=1$, the Hardy space at $a=d$
and the Bergman space at $a=d+1$. Closely related are the spaces $\cH_s(\bB_d)$ with reproducing kernel
\begin{equation*}
  K(z,w) = \sum_{n=0}^\infty (n+1)^s \langle z, w \rangle^n
\end{equation*}
for $s \in \bR$. Here, $s=0$ corresponds to the Drury--Arveson space and $s=-1$
yields the Dirichlet space. It is known that if $s = a - 1 > -1$, then $\mathcal{H}_s(\mathbb{B}_d) = \mathcal{K}_a(\mathbb{B}_d)$ with equivalence of norms; this can be seen by expanding the kernel of $\mathcal{K}_a(\mathbb{B}_d)$
into a power series.

Let $\cH$ be a regular unitarily invariant Hilbert space with multiplier algebra $\Mult(\cH)$.
Then the coordinate functions are multipliers of $\cH$. 
Let $A(\cH)$ denote the norm closure of the polynomials inside of $\Mult(\cH)$.
If $\cH$ is the Hardy space on the disc or on the ball, then $A(\cH)$ is the disc algebra
or the ball algebra, respectively. If $\cH$ is the Drury--Arveson space, then $A(\cH)$
is Arveson's algebra $\cA_d$, see \cite{Arveson98}.

The monomials $z^k = z_1^{k_1}\dots z_d^{k_d}$ for $k \in \bN_0^d$ form an orthogonal basis for $\cH$,
and thus one can show that elements of $\cH$ are holomorphic functions on $\bB_d$ \cite[Proposition~4.1]{GHX04}.
Since the multiplier norm dominates the supremum norm, $A(\cH)$
is contained in the ball algebra $A(\bB_d)$. In particular,
every function in $A(\cH)$ extends uniquely to a continuous function on $\ol{\bB_d}$.
Conversely, $A(\cH)$ contains every function that is holomorphic in a neighborhood of $\ol{\bB_d}$. For instance,
this can be seen from the fact that the Taylor spectrum of the tuple
$(M_{z_1},\ldots,M_{z_d})$ on $\cH$ is equal to $\ol{\bB_d}$ (see \cite[Theorem 4.5]{GHX04}) by an application
of the Taylor functional calculus; see \cite{Mueller07}.
Indeed, $(M_{z_1}, \ldots,M_{z_d})$ is an essentially normal $d$-variable weighted shift.
Moreover, we obtain an exact sequence \cite[Theorem~4.6]{GHX04}
\begin{equation}
  \label{eqn:short_exact}
   0 \to K(\cH) \to C^*(A(\cH)) \to C(\partial \bB_d) \to 0 .
\end{equation}
Here, $K(\mathcal{H})$ denotes the ideal of compact operators on $\mathcal{H}$, the first map is the inclusion
map, and the second map sends $M_f + K$ to $f \big|_{\partial \mathbb{B}_d}$ for $f \in A(\mathcal{H})$
and $K \in K(\mathcal{H})$.

We may identify an element of the multiplier algebra $\Mult(\cH)$ with its multiplication operator
on $\cH$ and thus regard $\Mult(\cH)$ as a subalgebra of $B(\cH)$. Then $\Mult(\cH)$ is WOT closed. 
Therefore by trace duality, 
\[ \Mult(\cH) = (\cT(\cH) / \Mult(\cH)_\bot)^* .\]
We write $\Mult(\cH)_* = \cT(\cH) / \Mult(\cH)_\bot$ and call this space the standard predual
of $\Mult(\cH)$. On bounded subsets of $\Mult(\cH)$, the corresponding weak-$*$ topology
coincides with the topology of pointwise convergence on $\bB_d$.
One direction follows because 
\[ f(w) = \langle M_f k_w, k_w \|k_w\|^{-2} \rangle \qquad\text{for } w \in \bB_d ,\]
where $k_w = K(\cdot,w)$ denotes the reproducing kernel at $w$.
The converse follows because the kernel functions span $\cH$.

A reproducing kernel Hilbert space $\cH$ on $\bB_d$ with kernel $K$ is said to be a Pick space if the analogue
of Pick's interpolation theorem holds; see \cite{AM02} for background.
More precisely,
we say that $\cH$ satisfies the $k$-point Pick property if whenever $z_1,\ldots,z_k \in \bB_d$ and $w_1,\ldots,w_k \in \bC$ so that
\begin{equation*}
  \big[ K(z_i,z_j) (1 - w_i \overline{w_j}) \big]_{i,j=1}^k \ge 0,
\end{equation*}
then there exists a multiplier $f \in \Mult(\cH)$ of norm at most one so that $f(z_i) = \lambda_i$
for $1 \le i \le k$. If $\cH$ satisfies the $k$-point Pick property for all $k \in \bN$,
we say that $\cH$ is a Pick space.

It is frequently useful to allow matrix valued targets. In this setting,
$\cH$ is said to satisfy the $M_n$-Pick property if whenever $k \in \bN$,
$z_1,\ldots,z_k \in \bB_d$ and $W_1,\ldots,W_k \in M_n(\bC)$ so that
\begin{equation*}
  \big[ K(z_i,z_j) (I_n - W_i W_j^*) \big]_{i,j=1}^k \ge 0,
\end{equation*}
then there exists $F \in M_n(\Mult(\cH))$ of multiplier norm at most $1$ so that $F(z_i) = W_i$
for $1 \le i \le k$. If $\cH$ satisfies the $M_n$-Pick property for all $n \in \bN$, then $\cH$
is said to be a complete Pick space. The prototypical example of a complete Pick space is $H^2(\bD)$.
Other examples include the Drury--Arveson space $H^2_d$, the classical Dirichlet space,
and more generally the spaces $\cH_s(\bB_d)$ for $s \le 0$.

\subsection{Operator space basics}
\label{ss:os}

By definition, $A(\cH)$ is a non-selfadjoint operator algebra,
hence it is natural to consider the sequence of matrix norms
on $A(\cH)$ and prove that identifications involving $A(\cH)$ are not just
isometric isomorphisms, but completely isometric isomorphisms.
For our concrete interpolation problem, this will translate into interpolating
matrix valued targets. Indeed, in the theory of Pick interpolation,
it is customary and sometimes necessary to consider matrix valued multipliers,
see \cite{AM02}.

We therefore recall the necessary basics from the theory of operator spaces. Our standard
reference will be \cite{BL04}. A concrete operator space is a subspace $X \subset B(\cH)$.
The identification $M_n(X) \subset M_n(B(\cH)) = B(\cH^n)$ endows each space $M_n(X)$
with a norm. An abstract operator space is a vector space $V$, together with
a sequence of norms on $M_n(V)$, satisfying certain axioms.
We will not require the axioms themselves and simply refer to \cite[1.2.12]{BL04}.

If $X$ and $Y$ are abstract operator spaces, then each linear map $\Phi: X \to Y$
induces linear maps $\Phi^{(n)}: M_n(X) \to M_n(Y)$ by applying $\Phi$ to each matrix entry.
One says that $\Phi$ is completely bounded if
\begin{equation*}
  \|\Phi\|_{cb} = \sup_{n \ge 1} \|\Phi^{(n)} \| < \infty,
\end{equation*}
and completely contractive if $\|\Phi\|_{cb} \le 1$.
We write $CB(X,Y)$ for the space of completely bounded maps from $X$ to $Y$,
endowed with $\|\cdot\|_{cb}$.
Moreover, $\Phi$ is completely isometric if each $\Phi^{(n)}$ is an isometry.
Similarly, $\Phi$ is a complete quotient map if each $\Phi^{(n)}$ is a quotient map, meaning
$\Phi^{(n)}$ maps the open unit ball of $M_n(X)$ onto the open unit ball of $M_n(Y)$.
More generally, $\Phi$ is completely surjective if there exists a constant $C > 0$ so that for all $n \in \bN$
and for all $y \in M_n(Y)$, there exists $x \in M_n(X)$ with $\Phi^{(n)}(x) = y$
and $\|x\| \le C \|y\|$.

If $X$ is an abstract operator space, then the dual space $X^*$ carries
a natural operator space structure, obtained by the identification
$M_n(X^*) = \CB(X,M_n)$, see \cite[1.2.20]{BL04}.
We will frequently use the fact that a linear map $\Phi: X \to Y$ is a complete isometry
if and only if the adjoint $\Phi^*: Y^* \to X^*$ is a complete quotient map.
Moreover, if $X$ and $Y$ are complete, then
$\Phi: X \to Y$ is a complete quotient map if and only if $\Phi^*: Y^* \to X^*$
is a complete isometry; see \cite[1.4.3]{BL04}.

If $X$ and $Y$ are operator spaces, $X \oplus_\infty Y$ denotes the direct
sum of $X$ and $Y$, equipped with the operator space structure defined
by
\begin{equation*}
  \| (x,y) \|_{M_n(X \oplus Y)} = \max ( \|x\|_{M_n(X)}, \|y\|_{M_n(Y)}),
\end{equation*}
see \cite[1.2.17]{BL04}. We also require the $1$-direct sum $X \oplus_1 Y$,
see \cite[1.4.13]{BL04} or \cite[Section 2.6]{Pisier03}.
The norm on $X \oplus_1 Y$ itself is simply the usual $1$-norm given by
$\|(x,y)\|_1 = \|x\| + \|y\|$.
The operator space structure on $X \oplus_1 Y$ is slightly more difficult to describe.
It is characterized by the following universal property: For any operator space $Z$
and any pair of complete contractions $\Phi: X \to Z$ and $\Psi: Y \to Z$, the map
\begin{equation*}
  X \oplus_1 Y \to Z, \quad (x,y) \mapsto \Phi(x) + \Psi(y),
\end{equation*}
is a complete contraction.
Moreover, the completely isometric identities 
\[
 (X \oplus_1 Y)^* = X^* \oplus_\infty Y^* \qand (X \oplus_\infty Y)^* = X^* \oplus_1 Y^* 
\]
hold.
It also follows from this duality that for any pair
of complete isometries $\Phi: X_1 \to X_2$ and $\Psi: Y_1 \to Y_2$,
the direct sum
\begin{equation*}
  \Phi \oplus \Psi: X_1 \oplus_1 Y_1 \to X_2 \oplus_1 Y_2
\end{equation*}
is also a complete isometry.

We will apply these considerations to $A(\cH)$ and $C(X)$, the space of continuous
functions on a compact metric space $X$. By definition, $A(\cH)$ carries an operator space
structure by identifying a function in $A(\cH)$ with the corresponding multiplication
operator on $\cH$. We will endow $A(\cH)^*$ with the dual operator space structure.
Similarly, $\Mult(\cH)^*$ carries the dual operator space structure,
which also gives $\Mult(\cH)_* \subset \Mult(\cH)^*$ the structure of an operator space.
We claim that with this definition, the operator space dual of $\Mult(\cH)_*$
is $\Mult(\cH)$. Indeed, since the inclusion of an operator
space into its second dual is a complete isometry (see \cite[Proposition 1.4.1]{BL04}),
it suffices to observe that there exists an operator space structure
on $\Mult(\cH)_*$ whose operator space dual is $\Mult(\cH)$.
But this follows from the concrete description $\Mult(\cH)_* = \cT(\cH) / \Mult(\cH)_\bot$,
which allows us to endow $\Mult(\cH)_*$ with the quotient
operator space structure of $\cT(\cH)$; see \cite[Lemma 1.4.6]{BL04}.

Moreover, if $X$ is a compact metric space, then $C(X)$ is endowed
with the operator space structure given by the identification $M_n(C(X)) = C(X,M_n)$.
Finally, by the Riesz representation theorem, $M(X) = C(X)^*$, which allows us to equip
$M(X)$ with the dual operator space structure.

\subsection{Henkin measures, totally singular measures and totally null sets}

Throughout, let $\cH$ be a regular unitarily invariant space on $\bB_d$.
Recall from Subsection \ref{ss:RKHS} that $\Mult(\cH)$ is the dual space of $\Mult(\cH)_* = \cT(\cH) / \Mult(\cH)_\bot$, and that on bounded subsets of $\Mult(\cH)$, the corresponding
weak-$*$ topology coincides with the topology of pointwise convergence on $\bB_d$.
A linear functional $\varphi \in A(\cH)^*$ is said to be a $\Mult(\cH)$-Henkin
functional if it extends to a weak-$*$ continuous functional on $\Mult(\cH)$.
The following characterization of Henkin functionals is a special case of \cite[Lemma 3.1]{BHM17}.

\begin{lem}
  \label{lem:Henkin_sequence}
  Let $\cH$ be a regular unitarily invariant space. Then the following assertions
  are equivalent for a functional $\varphi \in A(\cH)^*$.
  \begin{enumerate}[label=\normalfont{(\roman*)}]
    \item $\varphi$ is $\Mult(\cH)$-Henkin.
    \item Whenever $(p_n)$ is a sequence of polynomials so that \mbox{$\|p_n\|_{\Mult(\cH)} \le 1$}
      for all $n \in \bN$ and $\lim_{n \to \infty} p_n(z) = 0$ for all $z \in \bB_d$, then also
      $\lim_{n \to \infty} \varphi(p_n) = 0$.
  \end{enumerate}
\end{lem}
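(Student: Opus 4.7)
The plan is as follows. The direction (i)$\Rightarrow$(ii) is immediate from the fact, recalled in Subsection~\ref{ss:RKHS}, that on bounded subsets of $\Mult(\cH)$ the weak-$*$ topology agrees with pointwise convergence on $\bB_d$; a bounded polynomial sequence $(p_n)$ tending to $0$ pointwise therefore tends to $0$ weak-$*$, forcing $\varphi(p_n) = \tilde\varphi(p_n) \to 0$ for any weak-$*$ continuous extension $\tilde\varphi$.

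The substantive direction is (ii)$\Rightarrow$(i). I would first extend $\varphi$ from polynomials to $\Mult(\cH)$ using norm-nonincreasing polynomial approximants. Unitary invariance of the kernel provides an isometric scalar-torus action on $\Mult(\cH)$, and the associated Ces\`aro--Fej\'er means $\sigma_n(f)$ of the homogeneous expansion of any $f \in \Mult(\cH)$ are polynomials satisfying $\|\sigma_n(f)\|_{\Mult(\cH)} \le \|f\|_{\Mult(\cH)}$ and $\sigma_n(f) \to f$ pointwise on $\bB_d$. Set $\tilde\varphi(f) := \lim_n \varphi(\sigma_n(f))$. That this limit exists and is independent of the choice of bounded polynomial approximants is a direct consequence of (ii) applied to differences: if $(p_n),(q_n)$ are two such approximating sequences for $f$, then $(p_n-q_n)$ is a bounded polynomial sequence converging pointwise to $0$, so $\varphi(p_n-q_n) \to 0$. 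Linearity and the bound $|\tilde\varphi(f)| \le \|\varphi\|\,\|f\|_{\Mult(\cH)}$ are then routine, and $\tilde\varphi$ clearly agrees with $\varphi$ on polynomials and hence on all of $A(\cH)$ by norm density.

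The key step is verifying that $\tilde\varphi$ is weak-$*$ continuous on $\Mult(\cH)$. Since $\cH$ is separable, so is $\cT(\cH)$, and therefore so is $\Mult(\cH)_* = \cT(\cH)/\Mult(\cH)_\bot$; by the Krein--Smulian theorem it suffices to show sequential weak-$*$ continuity on the closed unit ball. Given a bounded sequence $(f_n)$ in $\Mult(\cH)$ converging pointwise to $0$ on $\bB_d$, I would fix a countable dense subset $\{z_i\} \subset \bB_d$ and perform a diagonal construction, picking $k_n$ large enough that
\[
  |\varphi(\sigma_{k_n}(f_n)) - \tilde\varphi(f_n)| < 1/n \qand |\sigma_{k_n}(f_n)(z_i) - f_n(z_i)| < 1/n \text{ for } i \le n.
\]
Then $(\sigma_{k_n}(f_n))$ is a bounded polynomial sequence converging to $0$ on the dense set $\{z_i\}$; since the multiplier norm dominates the supremum norm, the family is uniformly bounded on $\bB_d$, and Vitali's theorem for normal families of holomorphic functions lifts this to pointwise convergence on all of $\bB_d$. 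Hypothesis (ii) then gives $\varphi(\sigma_{k_n}(f_n)) \to 0$, and the first estimate forces $\tilde\varphi(f_n) \to 0$.

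The main obstacle is this last diagonal step: hypothesis (ii) is stated only for polynomial sequences, so the task is to replace a weak-$*$ null sequence of multipliers by a comparable polynomial sequence that still converges pointwise to $0$ on $\bB_d$. The availability of norm-nonincreasing polynomial approximants (from unitary invariance of the kernel), together with Vitali's theorem and the separability of both $\Mult(\cH)_*$ and $\bB_d$, is what makes the sequential reduction possible.
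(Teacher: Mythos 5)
Your proof is correct, but note that the paper does not actually prove this lemma: it is quoted as a special case of Lemma 3.1 of \cite{BHM17}, so there is no internal argument to compare against. Your self-contained argument runs on precisely the toolkit the paper deploys elsewhere: norm-contractive Fej\'er/Ces\`aro means coming from the scalar torus action (as in the proofs of Lemma \ref{lem:multiplier_rotations} and Lemma \ref{lem:predual_A(H)}), the Krein--Smulian reduction of weak-$*$ continuity to the closed unit ball, metrizability of the weak-$*$ topology on that ball via separability of $\Mult(\cH)_*$ (used again in Lemma \ref{lem:singular_fa}), and the coincidence of weak-$*$ and pointwise convergence on bounded sets; so it is very much in the spirit of the cited source. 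Two small points deserve an extra sentence, though neither is a genuine gap: hypothesis (ii) is stated for sequences of multiplier norm at most $1$, whereas you apply it to differences and to sequences bounded by $\sup_n\|f_n\|_{\Mult(\cH)}$, which is harmless after rescaling; and the existence of $\lim_n \varphi(\sigma_n(f))$ is not literally the same as independence of the approximating sequence --- it follows because the scalar sequence $(\varphi(\sigma_n(f)))$ is bounded and any two convergent subsequences must share their limit by your difference argument, but this should be said explicitly. With these glosses, $\tilde\varphi$ is a well-defined bounded extension agreeing with $\varphi$ on $A(\cH)$, and your diagonal argument combined with Montel/Vitali correctly upgrades convergence on a dense subset to pointwise convergence on $\bB_d$, so (ii) applies and yields the required sequential weak-$*$ continuity on the ball.
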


In particular, examples of Henkin functionals are evaluations at points in $\bB_d$.
We will use the following lemma in a few places. It is entirely routine,
but since we do not have an explicit reference for the exact statement, we provide the proof.

Let $\cU_d$ denote the group of $d \times d$-complex unitary matrices acting on the ball $\bB_d$ in the usual manner.
If $F: \bB_d \to M_n$, $0 \le r \le 1$ and $U \in \cU_d$, define $F_{r,U}(z) = F(r Uz)$.

\begin{lem}
  \label{lem:multiplier_rotations}
  Let $\cH$ be a regular unitarily invariant space.
  \begin{enumerate}[label=\normalfont{(\alph*)}]
    \item
      For each $U \in \cU_d$, the map
      \begin{equation*}
        \Mult(\cH) \to \Mult(\cH), \quad f \mapsto f_{r,U},
      \end{equation*}
      is a complete isometry if $r=1$ and a complete contraction when $0 \le r<1$.
      Moreover, it is weak-$*$--weak-$*$ continuous and maps $A(\cH)$ into $A(\cH)$.
    \item For each $f \in \Mult(\cH)$, the map
      \begin{equation*}
        [0,1] \times \cU_d  \to \Mult(\cH), \quad (r,U) \mapsto f_{r,U},
      \end{equation*}
      is weak-$*$ continuous.
  \end{enumerate}
\end{lem}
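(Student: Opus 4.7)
The plan for part (a) is to split into the cases $r=1$ and $0\le r<1$. When $r=1$, unitary invariance of the inner product on $\bC^d$ gives $K(Uz,Uw)=K(z,w)$, so the composition operator $C_U\colon \cH\to\cH$, $f\mapsto f\circ U$, is unitary, and a direct computation yields $C_U M_\varphi C_U^* = M_{\varphi_{1,U}}$. Hence $\varphi\mapsto\varphi_{1,U}$ is unitarily implemented on $B(\cH)$, which is automatically a complete isometry on $\Mult(\cH)$; its inverse is $\varphi\mapsto\varphi_{1,U^{-1}}$.

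For $0\le r<1$, the key tool is the slicewise Poisson integral: for any $z\in\bB_d$ the function $\lambda\mapsto\varphi(\lambda Uz)$ is holomorphic on the disk $|\lambda|<1/\|z\|$, so the classical Poisson formula on $\bD$ gives
\[
\varphi(rUz) \;=\; \int_0^{2\pi}\varphi(e^{i\theta}Uz)\,P_r(\theta)\,\frac{d\theta}{2\pi},
\]
where $P_r$ denotes the Poisson kernel. Read inside $\Mult(\cH)$, this presents $\varphi_{r,U}$ as a weak-$*$ integral, against the probability measure $P_r(\theta)\,\frac{d\theta}{2\pi}$, of the family $\{\varphi_{1,e^{i\theta}U}\}_\theta$, whose members all share multiplier norm $\|\varphi\|_{\Mult(\cH)}$ by the $r=1$ case. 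Weak-$*$ compactness and convexity of the closed unit ball of $\Mult(\cH)$ then yield $\|\varphi_{r,U}\|_{\Mult(\cH)}\le\|\varphi\|_{\Mult(\cH)}$, and applying the same averaging entrywise to matrix-valued multipliers gives complete contractivity. Weak-$*$--weak-$*$ continuity follows because $f\mapsto f_{r,U}$ is bounded and manifestly preserves pointwise convergence on $\bB_d$; on norm-bounded subsets of $\Mult(\cH)$ the weak-$*$ topology coincides with pointwise convergence on $\bB_d$, so the map is weak-$*$ continuous on the unit ball, and Krein--\v{S}mulian upgrades this to weak-$*$ continuity on all of $\Mult(\cH)$. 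Since the map sends polynomials to polynomials, norm continuity then sends $A(\cH)$ into $A(\cH)$.

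For part (b), fix $f\in\Mult(\cH)$. By (a), the family $\{f_{r,U}:(r,U)\in[0,1]\times\cU_d\}$ is norm-bounded in $\Mult(\cH)$, so weak-$*$ convergence inside it reduces to pointwise convergence on $\bB_d$. If $(r_n,U_n)\to(r,U)$, continuity of $f$ on $\bB_d$ gives $f_{r_n,U_n}(z)=f(r_nU_nz)\to f(rUz)=f_{r,U}(z)$ for each $z\in\bB_d$, which is precisely the required joint weak-$*$ continuity. The principal difficulty lies in the $r<1$ case of (a): securing a multiplier contraction without resorting to positive-definiteness of the kernel ratio $K(z,w)/K(rz,rw)$, which is not obviously true for a general regular unitarily invariant kernel. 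The Poisson integral trick sidesteps this by realizing the dilation as a convex average of unitary rotations, for which the $r=1$ complete isometry already supplies the norm bound.
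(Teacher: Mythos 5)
Your proposal is correct and follows essentially the same route as the paper: unitary implementation of the rotation $f\mapsto f_{1,U}$ for the isometry, a Poisson-kernel average over the circle rotations $\varphi_{1,e^{i\theta}U}$ to get the complete contraction for $r<1$, and Krein--\v{S}mulian together with the coincidence of the weak-$*$ and pointwise topologies on bounded sets for the continuity statements in (a) and (b). The only cosmetic difference is that the paper first reduces to $U=I_d$ and justifies the operator-valued integral via SOT-continuity of $U\mapsto f_{1,U}$, whereas you treat it as a weak-$*$ integral and invoke convexity of the ball; both are fine.
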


\begin{proof}
  (a) Since $\cH$ is unitarily invariant, there exists an SOT-continuous unitary representation
  \begin{equation*}
    \Gamma: \cU_d \to B(\cH), \quad \Gamma(U)(g) = g_{1,U}.
  \end{equation*}
  If $f \in \Mult(\cH)$, then $M_{f_{1,U}} = \Gamma(U) M_f \Gamma(U^*)$,
  which proves that the map $f \mapsto f_{1,U}$ is a complete isometry.
  
  To prove that the map is a complete contraction if $r < 1$, it suffices to show that for each $r \in [0,1)$, the map $f \mapsto f_{r,I_d}$
  is completely contractive on $\Mult(\cH)$.
  From the above argument, we see that the map
  \begin{equation*}
    \cU_d \to \Mult(\cH), \quad U \mapsto f_{1,U},
  \end{equation*}
  is SOT-continuous for each $f \in \Mult(\cH)$.
  If $P_r(\lambda) = \frac{1 - r^2}{|1 - r \lambda|^2}$ denotes the Poisson kernel,
  and if $F \in M_n(\Mult(\cH))$,
  then the integral
  \begin{equation*}
    \int_{\bT} F_{1, \lambda I_d} P_r(\lambda) \, dm (\lambda),
  \end{equation*}
  where $m$ denotes the normalized Lebesgue measure on $\mathbb{T}$,
  converges in the strong operator topology.
  Evaluating at $z \in \bB_d$, we find that the integral equals $F_{r,I_d}$.
  Thus, standard properties of the Poisson kernel yield that
  \begin{equation*}
    \|F_{r,I_d}\|_{M_n(\Mult(\cH))} \le \|F\|_{M_n(\Mult(\cH))}.
  \end{equation*}

  To establish weak-$*$--weak-$*$ continuity, an application of the Krein--Smulian
  theorem shows that it suffices to establish weak-$*$--weak-$*$ continuity
  on bounded subsets of $\Mult(\cH)$.
  But since the map $f \mapsto f_{r,U}$ is bounded, this follows
  from the fact that on bounded subsets of $\Mult(\cH)$, the weak-$*$ topology
  agrees with the topology of pointwise convergence on $\bB_d$.

  Finally, the map takes polynomials to polynomials, so by continuity, it maps $A(\cH)$
  into $A(\cH)$.

  (b) From (a), it follows that the set
  \begin{equation*}
    \{ f_{r,U} : 0 \le r \le 1,\ U \in \cU_d \}
  \end{equation*}
  is a bounded subset of $\Mult(\cH)$. Therefore the statement follows once again from the fact
  that on bounded subsets of $\Mult(\cH)$, the weak-$*$ topology
  agrees with the topology of pointwise convergence on $\bB_d$.
\end{proof}

The following lemma shows that the space of Henkin functionals on $A(\cH)$ can be identified with $\Mult(\cH)_*$.
Without the statement about complete isometry, it
is contained in Lemma 3.1 of \cite{BHM17} and its proof.
The proof carries over with minimal changes.

\begin{lem}
  \label{lem:predual_A(H)}
  Let $\cH$ be a regular unitarily invariant space. The map
  \begin{equation*}
    \Mult(\cH)_* \to A(\cH)^*, \quad \varphi \mapsto \varphi \big|_{A(\cH)},
  \end{equation*}
  is a complete isometry whose range is the space of $\Mult(\cH)$-Henkin functionals.
  In particular, the space of $\Mult(\cH)$-Henkin functionals is norm closed in $A(\cH)^*$.
\end{lem}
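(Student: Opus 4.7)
The plan is to prove the three assertions in sequence: that the range of the restriction map consists exactly of the $\Mult(\cH)$-Henkin functionals, that the map is a complete isometry, and that the range is norm-closed in $A(\cH)^*$. The third is a formal consequence of the second, since $\Mult(\cH)_* = \cT(\cH)/\Mult(\cH)_\bot$ is a Banach space and an isometric image of a Banach space is norm-closed. The description of the range is essentially tautological: a functional on $A(\cH)$ is Henkin precisely when it extends to a weak-$*$ continuous functional on $\Mult(\cH)$, and weak-$*$ continuous functionals on $\Mult(\cH)$ are exactly the elements of $\Mult(\cH)_*$; so the restriction of any $\varphi \in \Mult(\cH)_*$ to $A(\cH)$ is Henkin, and any Henkin functional is such a restriction.

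The heart of the proof is to show that the unit ball of $M_n(A(\cH))$ is weak-$*$ dense in the unit ball of $M_n(\Mult(\cH))$ for every $n$. For this, I would use Ces\`aro averaging of the homogeneous expansion. Given $F \in M_n(\Mult(\cH))$, write $F = \sum_{k \ge 0} F_k$ with each $F_k$ an $M_n$-valued homogeneous polynomial (which exists because $\cH$ is unitarily invariant, so multipliers are analytic on $\bB_d$). For the Fej\'er kernel $F_N \ge 0$ on $\bT$ with $\int F_N \, dm = 1$, define the Ces\`aro mean
\begin{equation*}
  \sigma_N(F)(z) = \int_\bT F_{1,\lambda I_d}(z)\, F_N(\lambda) \, dm(\lambda) = \sum_{k=0}^{N} \Bigl(1 - \tfrac{k}{N+1}\Bigr) F_k(z).
\end{equation*}
Each $\sigma_N(F)$ is an $M_n$-valued polynomial, so lies in $M_n(A(\cH))$. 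The integral converges in SOT (as in Lemma \ref{lem:multiplier_rotations}(a)), and since $\lambda \mapsto F_{1,\lambda I_d}$ is a completely isometric action on $\Mult(\cH)$, averaging against a positive probability measure yields $\|\sigma_N(F)\|_{M_n(\Mult(\cH))} \le \|F\|_{M_n(\Mult(\cH))}$. On the other hand, standard properties of the Fej\'er kernel give $\sigma_N(F)(z) \to F(z)$ entrywise for each $z \in \bB_d$, and since the weak-$*$ topology on bounded subsets of $\Mult(\cH)$ (and hence of $M_n(\Mult(\cH))$) coincides with pointwise convergence on $\bB_d$, we get $\sigma_N(F) \to F$ in the weak-$*$ topology.

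With this density in hand, the complete isometry claim is automatic. Under the identification $M_n(\Mult(\cH)_*) = \CB^{w^*}(\Mult(\cH), M_n)$ of weak-$*$ continuous completely bounded maps, take $\Phi \in M_n(\Mult(\cH)_*)$ and any $F \in M_k(\Mult(\cH))$ with $\|F\| \le 1$. Approximate $F$ weak-$*$ by $\sigma_N(F) \in M_k(A(\cH))$ of norm at most one; by weak-$*$ continuity, $\Phi^{(k)}(\sigma_N(F)) \to \Phi^{(k)}(F)$ in $M_{nk}$ (where weak-$*$ equals norm), giving $\|\Phi^{(k)}(F)\| \le \|\Phi|_{A(\cH)}\|_{cb}$. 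Hence $\|\Phi\|_{cb} \le \|\Phi|_{A(\cH)}\|_{cb}$, and the reverse inequality is trivial. Injectivity follows as a special case, since a functional vanishing on $A(\cH)$ must vanish on its weak-$*$ closure, which contains $\Mult(\cH)$.

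The main obstacle is the norm control in the matrix-valued Ces\`aro averaging, which I would handle by invoking Lemma \ref{lem:multiplier_rotations}(a) entrywise (equivalently, applied to $M_n$-valued multipliers, for which the rotation action remains a complete isometry since it is implemented by the same unitary $\Gamma(U)$ tensored with $I_n$). Once this is in place, everything else is formal duality and the characterization of weak-$*$ continuous functionals.
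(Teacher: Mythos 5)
Your proposal is correct and follows essentially the same route as the paper: the range description is immediate from the definitions, and the complete isometry is reduced to weak-$*$ density of the unit ball of $M_n(A(\cH))$ in that of $M_n(\Mult(\cH))$, established via Fej\'er/Ces\`aro means of the circle action $\lambda \mapsto F_{1,\lambda I_d}$ using Lemma \ref{lem:multiplier_rotations}. The only difference is that you spell out the averaging estimate and the dual formulation explicitly, which the paper leaves to "standard properties of the Fej\'er kernel."
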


\begin{proof}
It is clear from the definition of Henkin functionals that the restriction map takes $\Mult(\cH)_*$ onto the set of Henkin functionals.
  Since the inclusion $A(\cH) \subset \Mult(\cH)$ is a complete isometry,
  the restriction map
  \begin{equation*}
    \Mult(\cH)_* \subset \Mult(\cH)^* \to A(\cH)^*, \quad \varphi \mapsto \varphi \big|_{A(\cH)},
  \end{equation*}
  is a complete contraction. To see that it is completely isometric, it suffices to observe
  that for each $n \in \bN$, the unit ball of $M_n(A(\cH))$ is weak-$*$ dense
  in the unit ball of $M_n(\Mult(\cH))$.
  To see this, note that if $F \in M_n(\Mult(\cH))$,  then
  $\|F_{1, \lambda I_d}\|_{M_n(\Mult(\cH))} = \|F\|_{M_n(\Mult(\cH))}$ for all $\lambda \in \bT$,
  and the map $\lambda \mapsto F_{1, \lambda I_d}$ is continuous in the weak-$*$ topology
  by Lemma \ref{lem:multiplier_rotations}.
  In this setting, standard properties of the Fej\'er kernel (cf. \cite[Lemma I 2.5]{Katznelson04})
  imply that the Fej\'er means $(F_n)$ of $F$
  are matrices of polynomials, satisfy $\|F_n\|_{M_n(\Mult(\cH))} \le \|F\|_{M_n(\Mult(\cH))}$
  for all $n \in \bN$ and converge to $F$ in the weak-$*$ topology.
\end{proof}

In the sequel, we will therefore identify $\Mult(\cH)_*$ with a subset of $A(\cH)^*$.

As explained in the introduction,
a regular Borel measure $\mu$ on $\partial \bB_d$ is said to be $\Mult(\cH)$-Henkin
if the associated integration functional
\begin{equation*}
  \rho_\mu: A(\cH) \to \bC, \quad f \mapsto \int_{\partial \bB_d} f \, d \mu,
\end{equation*}
is $\Mult(\cH)$-Henkin. We write $\HENH$ for the space of all $\Mult(\cH)$-Henkin
measures on $\partial \bB_d$.
We say that $\mu$ is $\Mult(\cH)$-totally singular
if it is singular with respect to every $\Mult(\cH)$-Henkin measure.
The space of all $\Mult(\cH)$-totally singular measures is denoted by $\TSH$.

If $X$ is a compact metric space, a norm closed subspace $\Sigma \subset M(X)$
is called a \emph{band} if whenever $\mu \in \Sigma$ and $\nu \in M(X)$
is absolutely continuous with respect to $\mu$, then $\nu \in \Sigma$. In particular,
$\mu \in \Sigma$ if and only if $|\mu| \in \Sigma$.
The sets $\HENH$ and $\TSH$ form complementary bands in $M(\partial \bB_d)$.
In particular:

\begin{lem}
  \label{lem:hen_band}
  $\HENH$ and $\TSH$ are bands in $M(\partial \bB_d)$. Every $\mu \in M(\partial \bB_d)$
  has a unique decomposition $\mu = \mu_a + \mu_s$,
  where $\mu_a \in \HENH$ and $\mu_s \in \TSH$.
\end{lem}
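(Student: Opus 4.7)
My plan is to dispose of the easy pieces first and then focus on the band property of $\HENH$, which I expect to be the main work. First, $\TSH$ is evidently a band: for each fixed $\nu \in \HENH$, the set $\{\mu \in M(\partial\bB_d) : \mu \perp \nu\}$ is a band by elementary measure theory (absolute continuity preserves mutual singularity), and $\TSH$ is the intersection of these, hence itself a band. Once $\HENH$ is shown to be a band, the decomposition $M(\partial\bB_d) = \HENH \oplus_1 \TSH$ together with its uniqueness will follow from the standard Banach--lattice fact that two complementary bands in $M(X)$ give an $L^1$-direct sum decomposition.

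The real work is to show $\HENH$ is a band. Norm-closure is immediate from Lemma~\ref{lem:predual_A(H)}: the restriction map $R \colon M(\partial\bB_d) \to A(\cH)^*$, $\mu \mapsto \rho_\mu|_{A(\cH)}$, is contractive, the space $\Mult(\cH)_* \subset A(\cH)^*$ is norm-closed, and $\HENH = R^{-1}(\Mult(\cH)_*)$. For solidity, given $\mu \in \HENH$ and $\nu \ll |\mu|$, I write $\nu = f\mu$ with $f \in L^1(|\mu|)$. Using norm-closure of $\HENH$ together with $L^1(|\mu|)$-density of $C(\partial\bB_d)$ (Lusin's theorem), the uniform density of polynomials in $z_j, \bar z_j$ within $C(\partial\bB_d)$ (Stone--Weierstrass on the sphere), and linearity of $f \mapsto f\mu$, the problem reduces to showing $z^\alpha \bar z^\beta \mu \in \HENH$ for all multi-indices $\alpha, \beta$.

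The holomorphic case $\beta = 0$ is easy: the functional $p \mapsto \int p\, z^\alpha\, d\mu$ on $A(\cH)$ is the composition of left-multiplication by the multiplier $z^\alpha$ on $\Mult(\cH)$ (weak-$*$ continuous) with the weak-$*$-continuous Henkin extension $\Phi_\mu$, hence itself weak-$*$ continuous. The main obstacle will be the antiholomorphic case. By iteration it suffices to treat a single factor $\bar z_i$, and the natural route is via a dilation-theoretic representation of Henkin functionals in the spirit of \cite{BHM17}: $\Phi_\mu$ should arise as $\int p\,d\mu = \langle p(T)\xi,\eta\rangle$ for some commuting row contraction $(T_1,\dots,T_d)$ on a Hilbert space $\cK$ whose spherical piece supports a representation of $C(\partial\bB_d)$, together with vectors $\xi,\eta \in \cK$. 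Under such a dilation $\bar z_i$ on $\partial\bB_d$ corresponds to $T_i^*$, so $\int p\,\bar z_i\,d\mu = \langle p(T)T_i^*\xi,\eta\rangle = \langle p(T)\xi',\eta\rangle$ for $\xi' = T_i^*\xi$, which is again of dilation form and therefore Henkin. Setting up this dilation rigorously, so that boundary integration is correctly captured and the functional-calculus identification of $\bar z_i$ with $T_i^*$ is justified, will be the principal technical challenge.
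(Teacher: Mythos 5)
Your outer framework matches the paper's: norm-closedness of $\HENH$ via Lemma \ref{lem:predual_A(H)}, the elementary observation that $\TSH = \HENH^{\bot}$ is a band, and the F.~Riesz band decomposition to get existence and uniqueness of $\mu = \mu_a + \mu_s$. The difference lies in the solidity of $\HENH$ (if $\mu \in \HENH$ and $\nu \ll |\mu|$, then $\nu \in \HENH$): the paper does not prove this at all, but cites \cite[Lemma 3.3]{BHM17} (and \cite[Theorem 5.4]{CD16} for the Drury--Arveson case), whereas you attempt a proof from scratch. Your reduction, using norm-closedness together with $L^1(|\mu|)$- and uniform approximation, to showing $z^\alpha \bar z^\beta \mu \in \HENH$ is fine, and the holomorphic case $\beta = 0$ is correct, since multiplication by $z^\alpha$ is weak-$*$ continuous on $\Mult(\cH)$.

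The antiholomorphic step, however, is a genuine gap, and the sketch you give for it is essentially circular. The inference ``$\int p\, \bar z_i \, d\mu = \langle p(T) T_i^* \xi, \eta \rangle$, which is again of dilation form and therefore Henkin'' is not valid: being a vector functional of a commuting row contraction does not make a functional Henkin (take $T$ to be the spherical unitary given by the point $e_1$ acting on $\bC$; then $p \mapsto \langle p(T)1,1\rangle$ is integration against $\delta_{e_1}$, which is typically totally singular). Moreover, for the identity $\int p\,\bar z_i\,d\mu = \langle p(T)T_i^*\xi,\eta\rangle$ to hold at all, the part of $T$ carrying $\mu$ must implement a $*$-representation of $C(\partial \bB_d)$ whose vector state recovers $\int \cdot\, d\mu$ --- concretely, the multiplication representation on $L^2(|\mu|)$ --- and then the claim that its vector functionals, restricted to polynomials, are Henkin is precisely the claim that every measure absolutely continuous with respect to $|\mu|$ is Henkin, i.e., the band property you are trying to establish. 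So the ``principal technical challenge'' you defer is not a technicality but the entire content of the cited \cite[Lemma 3.3]{BHM17}, and as written your argument does not close it.
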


\begin{proof}
  Since $A(\cH)$ is contractively contained in $C( \partial \bB_d)$, the map
  \begin{equation*}
    M( \partial \bB_d) \to A(\cH)^*, \quad \mu \mapsto \rho_{\mu},
  \end{equation*}
  is contractive. Lemma \ref{lem:predual_A(H)} shows that the space of Henkin functionals is closed in $A(\cH)^*$,
  so it follows that $\HENH$ is closed in $M( \partial \bB_d)$.
  The band property of $\HENH$ was shown in \cite[Lemma 3.3]{BHM17} (see
  also \cite[Theorem 5.4]{CD16} for the case of the Drury--Arveson space).
  It is a general fact that for every non-empty subset $A$ of measures
  on a compact metric space, the set
  \begin{equation*}
    A^\bot = \{ \nu \in M(X): \nu \bot \mu \tfa \mu \in A \}
  \end{equation*}
  is a norm closed band; see \cite[Remark II 2.3]{BK77}. In particular,
  $\TSH = \HENH^\bot$ is a band. The statement about the decomposition
  then follows from the F.\ Riesz decomposition theorem for bands;
  see \cite{KS69} or \cite[Section II.2]{BK77}.
\end{proof}

In the theory of classical Henkin measures on the ball (see \cite[Chapter 9]{Rudin08}),
one obtains an a priori stronger notion of singularity in the decomposition
by applying the Glicksberg--K\"onig--Seever decomposition theorem.
This theorem is applied to the weak-$*$ compact convex set of representing measures of the origin.
The key points are theorems of Henkin and Cole--Range,
which show that a measure is a classical Henkin measure
if and only it is absolutely continuous with respect to some representing measure
of the origin.

For more general reproducing kernel Hilbert spaces $\cH$, we do not know if there exists a similar characterization
of $\Mult(\cH)$-Henkin measures. Nevertheless, we will show in Proposition
\ref{prop:measure_concentrated} as a consequence of Theorem \ref{thm:A_H_decomp_intro}
that $\Mult(\cH)$-totally singular measures satisfy a similar strong singularity property.

A Borel subset $E \subset \partial \bB_d$ is said to be $\Mult(\cH)$-totally null
if $|\mu|(E) = 0$ for all $\Mult(\cH)$-Henkin measures $\mu$.
The following equivalent characterizations follow from
the fact that $\Mult(\cH)$-Henkin measures form a band.

\begin{lem}
  \label{lem:TN_elementary}
  Let $E \subset \partial \bB_d$ be a Borel set. The following assertions are equivalent:
  \begin{enumerate}[label=\normalfont{(\roman*)}]
    \item $E$ is $\Mult(\cH)$-totally null, that is, $|\mu|(E) = 0$ for all $\Mult(\cH)$-Henkin
      measures $\mu$.
    \item If $\nu \in M(\partial \bB_d)$ is concentrated on $E$, then $\nu \in \TSH$.
    \item If $\mu$ is a positive $\Mult(\cH)$-Henkin measure
      that is concentrated on $E$, then $\mu =0$.
  \end{enumerate}
\end{lem}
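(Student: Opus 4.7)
The plan is to prove a cyclic chain of implications \textup{(i)} $\Rightarrow$ \textup{(ii)} $\Rightarrow$ \textup{(iii)} $\Rightarrow$ \textup{(i)}, using only the band properties of $\HENH$ and $\TSH$ established in Lemma \ref{lem:hen_band}. None of the three steps should require any additional ideas beyond elementary measure theory.

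For \textup{(i)} $\Rightarrow$ \textup{(ii)}, I would take $\nu$ concentrated on $E$ and an arbitrary $\Mult(\cH)$-Henkin measure $\mu$, and show $\nu \perp \mu$. By (i), $|\mu|(E) = 0$, so $|\mu|$ is concentrated on $\partial \bB_d \setminus E$ while $\nu$ is concentrated on $E$; thus $\nu \perp \mu$. Since $\mu$ was arbitrary, $\nu \in \TSH$.

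For \textup{(ii)} $\Rightarrow$ \textup{(iii)}, let $\mu$ be a positive $\Mult(\cH)$-Henkin measure concentrated on $E$. By (ii), $\mu \in \TSH$, so $\mu$ is singular with respect to every Henkin measure, in particular with respect to itself. Hence $\mu = 0$. (Alternatively, $\HENH \cap \TSH = \{0\}$ since they are complementary bands.)

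For \textup{(iii)} $\Rightarrow$ \textup{(i)}, let $\mu$ be any $\Mult(\cH)$-Henkin measure. Since $\HENH$ is a band, $|\mu|$ is also Henkin. Define $\nu$ by $\nu(A) = |\mu|(A \cap E)$ for Borel sets $A$; then $\nu$ is a positive measure, $\nu \ll |\mu|$, so by the band property $\nu \in \HENH$, and $\nu$ is concentrated on $E$. By (iii) we conclude $\nu = 0$, i.e.\ $|\mu|(E) = 0$.

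No step here is a serious obstacle: the whole argument is a formal consequence of the two facts that $\HENH$ is a band and that $\HENH$ and $\TSH$ are mutually singular. The only minor point to be careful about is the definition of ``concentrated on $E$'' so that one may freely pass between restrictions and absolute continuity; this is a standard measure-theoretic bookkeeping matter rather than a genuine difficulty.
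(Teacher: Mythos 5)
Your proposal is correct and follows essentially the same route as the paper: (i) $\Rightarrow$ (ii) via mutual singularity, (ii) $\Rightarrow$ (iii) via $\mu \perp \mu$, and (iii) $\Rightarrow$ (i) by restricting $|\mu|$ to $E$ and using the band property of $\HENH$ twice. No gaps.
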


\begin{proof}
  (i) $\Rightarrow$ (ii) If $\nu$ is concentrated on $E$ and $\mu$ is $\Mult(\cH)$-Henkin,
  then $\nu \bot \mu$ by (i), so $\nu \in \TSH$.
  
  (ii) $\Rightarrow$ (iii) Let $\mu$ be a $\Mult(\cH)$-Henkin measure that is concentrated on $E$.
  Then $\mu \in \TSH$ by (ii), so $\mu \bot \mu = 0$ and hence $\mu  = 0$.

  (iii) $\Rightarrow$ (i) Let $\mu$ be $\Mult(\cH)$-Henkin. We wish
  to show that $|\mu|(E) = 0$. Since $\Mult(\cH)$-Henkin
  measures form a band by Lemma \ref{lem:hen_band}, the measure $|\mu|$ is $\Mult(\cH)$-Henkin
  as well, so we may assume that $\mu$ is a positive measure. Define $\mu|_E \in M(\partial \bB_d)$
  by $\mu|_E(A) = \mu(A \cap E)$ for Borel sets $A \subset \partial \mathbb{B}_d$. Then $\mu|_E$ is absolutely continuous with respect to $\mu$,
  so using again that $\Mult(\cH)$-Henkin measures form a band, we see that $\mu|_E$
  is $\Mult(\cH)$-Henkin. By (iii), we obtain that  $\mu(E) = \mu|_E(E) = 0$.
\end{proof}

In concrete examples of spaces $\cH$, there may be other notions of smallness on $\partial \bB_d$.
For instance, in the Dirichlet space $\cD$, a classical notion
of smallness is that of logarithmic capacity zero, see \cite[Chapter 2]{EKM+14}.
A positive measure $\mu \in M(\bT)$ is said to have finite energy
if the functional
\begin{equation*}
  \bC[z] \to \bC, \quad p \mapsto \int_{\bT} p \, d \mu,
\end{equation*}
is continuous with respect to the norm of $\cD$. (This is not the usual potential
theoretic definition, but it is an equivalent one, see \cite[Theorem 2.4.4]{EKM+14}.)
A compact subset $E \subset \bT$ has logarithmic capacity zero if and only if it does not
support a non-zero positive measure of finite energy.
Thus, we obtain the following implication between the two notions of smallness.

\begin{prop}
  \label{prop:TN_cap0}
  Let $E \subset \bT$ be compact. If $E$ is $\Mult(\cD)$-totally null, then $E$ has logarithmic
  capacity zero.
\end{prop}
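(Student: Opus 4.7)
My plan is to prove the contrapositive: assuming $E$ has positive logarithmic capacity, I will show $E$ is not $\Mult(\cD)$-totally null. By the characterization of logarithmic capacity recalled just before the proposition, if $E$ has positive capacity then $E$ supports a nonzero positive measure $\mu$ of finite energy, meaning the functional $p \mapsto \int_{\bT} p \, d\mu$ is continuous with respect to the $\cD$-norm on polynomials. By Lemma \ref{lem:TN_elementary}(iii), to conclude I only need to show that any such $\mu$ is $\Mult(\cD)$-Henkin.

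The first key step is to note that the multiplier norm dominates the Dirichlet norm on polynomials. Indeed, since $1 \in \cD$ with $\|1\|_\cD = 1$, we have $\|p\|_\cD = \|M_p 1\|_\cD \le \|p\|_{\Mult(\cD)}$ for every polynomial $p$. Consequently, the $\cD$-continuous functional associated to $\mu$ extends to a bounded functional on $A(\cD)$, so $\mu$ defines an element $\rho_\mu \in A(\cD)^*$.

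The second key step is to verify the sequential criterion for Henkin functionals from Lemma \ref{lem:Henkin_sequence}. Suppose $(p_n)$ is a sequence of polynomials with $\|p_n\|_{\Mult(\cD)} \le 1$ and $p_n(z) \to 0$ pointwise on $\bD$. By the domination above, $(p_n)$ is bounded in $\cD$. A standard fact about reproducing kernel Hilbert spaces (the kernel functions span a dense subspace, and evaluation at $w$ is $\langle \cdot, k_w \rangle_\cD$) implies that a norm-bounded sequence converging pointwise on $\bD$ converges weakly in $\cD$ to the same pointwise limit, hence $p_n \to 0$ weakly in $\cD$. Representing the $\cD$-continuous extension of $\mu$ via some $g \in \cD$, we obtain $\int p_n \, d\mu = \langle p_n, g \rangle_\cD \to 0$, so $\mu$ is $\Mult(\cD)$-Henkin.

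Thus $\mu$ is a nonzero positive $\Mult(\cD)$-Henkin measure concentrated on $E$, contradicting total nullity of $E$. The main technical point to get right is the passage from pointwise convergence in $\bD$ to weak convergence in $\cD$; everything else is a direct application of the definitions and the domination $\|\cdot\|_\cD \le \|\cdot\|_{\Mult(\cD)}$.
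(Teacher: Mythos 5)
Your proposal is correct and follows essentially the same route as the paper: both argue the contrapositive, producing a nonzero positive finite-energy measure on $E$ and observing it is $\Mult(\cD)$-Henkin, hence $E$ cannot be totally null. The only difference is that the paper treats the Henkin property of a finite-energy measure as immediate (the functional is $f \mapsto \langle M_f 1, g\rangle_\cD$ for some $g \in \cD$, hence weak-$*$ continuous on $\Mult(\cD)$), whereas you verify it carefully via the sequential criterion of Lemma \ref{lem:Henkin_sequence} and the domination $\|\cdot\|_\cD \le \|\cdot\|_{\Mult(\cD)}$ — a valid and complete filling-in of that step.
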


\begin{proof}
  If $E$ has non-zero logarithmic capacity, then it supports a non-zero positive measure $\mu$ of finite energy.
  Clearly, $\mu$ is in particular $\Mult(\cD)$-Henkin, and $\mu(E) = \mu(\bT) \neq 0$, so $E$ is not $\Mult(\cD)$-totally null.
\end{proof}

While the present paper was being refereed, it was shown by Chalmoukis and the second author
that in certain Dirichlet type spaces on the ball, the totally null condition
is equivalent to a suitable capacity zero condition \cite{CH20}.
In particular, this provides the converse of Proposition \ref{prop:TN_cap0}.

\section{The dual space of \texorpdfstring{$A(\cH)$}{A(H)}}

We require a version of the F.\ Riesz decomposition theorem for representations
of $C(X)$.
Let $X$ be a compact metric space, let $\Sigma \subset M(X)$ be a norm closed
band of measures and let $\Sigma^\bot$ be the complementary band. Thus,
$\mu \in \Sigma^\bot$ if and only if $\mu$ is singular with respect to every measure in $\Sigma$.
We say that a unital $*$-representation $\pi: C(X) \to B(\cH)$ is $\Sigma$-absolutely continuous
if for all $x,y \in \cH$, the representing measure of the functional
$f \mapsto  \langle \pi(f)x,y \rangle$ belongs to $\Sigma$.

\begin{lem}
  \label{lem:band_decom}
  Let $X$ be a compact metric space, let $\Sigma \subset M(X)$ be a norm closed
  band of measures and let $\pi: C(X) \to B(\cH)$ be a unital $*$-representation.
  Then $\pi$ is unitarily equivalent to a direct sum of representations $\pi_a \oplus \pi_s$,
  where $\pi_a$ is $\Sigma$-absolutely continuous and $\pi_s$ is $\Sigma^\bot$-absolutely
  continuous.
\end{lem}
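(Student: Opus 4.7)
The plan is to reduce to the cyclic case and then apply the F.~Riesz band decomposition to the underlying spectral measure. First I would invoke the standard fact that any $*$-representation of a commutative C*-algebra splits into an orthogonal sum of cyclic subrepresentations: by Zorn's lemma, pick a maximal family $\{x_i\}_{i \in I}$ of nonzero vectors whose cyclic subspaces $\cH_i = \ol{\pi(C(X)) x_i}$ are pairwise orthogonal. Then $\cH = \bigoplus_i \cH_i$, each $\cH_i$ is reducing for $\pi$, and $\pi_i := \pi|_{\cH_i}$ is cyclic with cyclic vector $x_i$.

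For each cyclic piece let $\mu_i$ denote the positive Radon measure on $X$ determined by $\int f \, d\mu_i = \langle \pi(f) x_i, x_i \rangle$, so that $\pi_i$ is unitarily equivalent to multiplication by $f$ on $L^2(\mu_i)$ via the map $\pi(f) x_i \mapsto f$. Applying the Riesz band decomposition, write $\mu_i = \mu_i^a + \mu_i^s$ with $\mu_i^a \in \Sigma$ and $\mu_i^s \in \Sigma^\bot$. Since the two summands are mutually singular, there is a Borel set $E_i \subset X$ on which $\mu_i^a$ is concentrated and whose complement carries $\mu_i^s$. This produces the multiplication-invariant orthogonal decomposition $L^2(\mu_i) = L^2(\mu_i^a) \oplus L^2(\mu_i^s)$, which in turn splits $\pi_i$ as $\pi_i^a \oplus \pi_i^s$.

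It remains to check that each half sits in the correct band. For $g, h \in L^2(\mu_i^a)$, the representing measure of the functional $f \mapsto \langle \pi_i^a(f) g, h \rangle$ on $C(X)$ has density $g \ol{h}$ with respect to $\mu_i^a \in \Sigma$, hence belongs to $\Sigma$ by the band property; the symmetric statement for $\pi_i^s$ uses that $\Sigma^\bot$ is also a band, as recalled in Lemma \ref{lem:hen_band}. Matrix coefficients between vectors in distinct cyclic summands vanish identically since each $\cH_i$ reduces $\pi$, so $\pi_a := \bigoplus_i \pi_i^a$ and $\pi_s := \bigoplus_i \pi_i^s$ together give the desired decomposition. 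I do not anticipate any serious obstacle; the only slightly delicate point is the verification that $g \ol{h}\, d\mu_i^a \in \Sigma$ for $g,h \in L^2(\mu_i^a)$, which is precisely where the assumption that $\Sigma$ is a band (closed under absolute continuity) is used.
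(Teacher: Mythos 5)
Your route is the same as the paper's: split $\pi$ into cyclic pieces, realize each one as multiplication on $L^2(\mu_i)$, apply the F.~Riesz band decomposition $\mu_i=\mu_i^a+\mu_i^s$, and regroup the absolutely continuous and singular halves. The one place where the argument as written is incomplete is the final aggregation step. You verify $\Sigma$-absolute continuity only for vectors $g,h$ lying in a single summand $L^2(\mu_i^a)$, and then assert that $\pi_a=\bigoplus_i\pi_i^a$ is $\Sigma$-absolutely continuous because matrix coefficients between distinct cyclic summands vanish. But the definition requires the representing measure of $f\mapsto\langle\pi_a(f)x,y\rangle$ to lie in $\Sigma$ for \emph{arbitrary} $x=(g_i)$, $y=(h_i)$ in the direct sum, and (after discarding the vanishing cross terms) that measure is the infinite series $\sum_i g_i\ol{h_i}\,d\mu_i^a$. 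Each term lies in $\Sigma$ by the band property, but to conclude that the sum does you need two further observations: by Cauchy--Schwarz, $\|g_i\ol{h_i}\,d\mu_i^a\|\le\|g_i\|_{L^2(\mu_i^a)}\|h_i\|_{L^2(\mu_i^a)}$, so the series converges absolutely in the Banach space $M(X)$; and $\Sigma$ is norm closed, so the limit measure again belongs to $\Sigma$. This is precisely the point at which the hypothesis that $\Sigma$ is norm closed enters, and your write-up never invokes it (the "delicate point" you flag, namely $g\ol{h}\,d\mu_i^a\in\Sigma$, only uses the band property). The same remark applies to $\pi_s$ and $\Sigma^\bot$.

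Once this series argument is added, the proof is complete and coincides with the paper's. A minor further point: the fact that $\Sigma^\bot$ is a band is a general property of complementary bands (it holds for the annihilator of any set of measures), so it should not be attributed to Lemma~\ref{lem:hen_band}, which is the special instance concerning Henkin and totally singular measures; citing the general fact keeps the lemma applicable to an arbitrary norm closed band $\Sigma$, as the statement requires.
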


\begin{proof}
  There exists a set $S$ of regular Borel probability measures on $X$
  so that $\pi$ is unitarily equivalent to $\bigoplus_{\mu \in S} \pi_\mu$,
  where
  \begin{equation*}
    \pi_\mu: C(X) \to B(L^2(\mu)), \quad f \mapsto M_f^{\mu},
  \end{equation*}
  and $M_f^{\mu}$ denotes the operator of multiplication by $f$ on $L^2(\mu)$;
  see \cite[Section II.1]{Davidson96}.
  For each $\mu \in S$, we apply the decomposition theorem
  of F. Riesz (see \cite{KS69} or \cite[Section II.2]{BK77}) to write
  $\mu = \mu_a + \mu_s$, where $\mu_a \in \Sigma$ and $\mu_s \in \Sigma^\bot$.
  Since $\mu_a \bot \mu_s$, the representation $\pi_\mu$ decomposes
  as a direct sum $\pi_{\mu_a} \oplus \pi_{\mu_s}$. Let
  $\pi_a = \bigoplus_{\mu \in S} \pi_{\mu_a}$ and $\pi_s = \bigoplus_{\mu \in S} \pi_{\mu_s}$.
  Then $\pi$ is unitarily equivalent to $\pi_a \oplus \pi_s$.

  To see that $\pi_a$ is $\Sigma$-absolutely continuous, note that if $g = (g_\mu)$
  and $h = (h_\mu)$ belong to $\bigoplus_{\mu \in S} L^2( \mu_a)$, then
  \begin{equation*}
    \langle \pi_a(f) g,h \rangle = \sum_{\mu \in S} \int_X f g_\mu \ol{h_\mu} \, d \mu_a
    \quad \text{ for all } f \in C(X).
  \end{equation*}
  For each $\mu \in S$, the measure $g_\mu \overline{h_\mu} d \mu_a$ belongs
  to $\Sigma$ since $\Sigma$ is a band. Moreover, the Cauchy--Schwarz inequality implies that the
  series $\sum_{\mu \in S} g_\mu \overline{h_\mu} d \mu_a$ converges absolutely
  to a measure $\nu$ in the Banach space $M(\partial \bB_d)$. Since $\Sigma$
  is norm closed, $\nu \in \Sigma$, and
  \begin{equation*}
    \langle \pi_a(f) g , h \rangle = \int_X f \, d \nu ; 
  \end{equation*}
  whence $\pi_a$ is $\Sigma$-absolutely continuous.
  The same argument shows that $\pi_s$ is $\Sigma^\bot$-absolutely continuous.
\end{proof}

We now prove a more precise version of Theorem \ref{thm:A_H_decomp_intro}.
Recall from Subsection \ref{ss:os} that $\Mult(\cH)_*,A(\cH)^*$ and $\TSH \subset M(\partial \bB_d)$
are equipped with their respective dual operator space structures.

\begin{thm}
  \label{thm:A_H_decomp}
  Let $\cH$ be a regular unitarily invariant space on $\bB_d$. Then
  the natural map
  \begin{equation*}
    \Mult(\cH)_* \oplus_1 \TSH \to A(\cH)^*, \quad (\varphi, \nu) \mapsto \varphi \big|_{A(\cH)}  + \rho_\nu ,
  \end{equation*}
  is a completely isometric isomorphism.
\end{thm}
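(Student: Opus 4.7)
My plan is as follows. First, I would observe that the map is a complete contraction by the universal property of the $\oplus_1$ direct sum recalled in Subsection~\ref{ss:os}: both $\Mult(\cH)_* \hookrightarrow A(\cH)^*$ (Lemma~\ref{lem:predual_A(H)}) and $\nu \mapsto \rho_\nu|_{A(\cH)}$ from $\TSH$ into $A(\cH)^*$ are complete contractions. Injectivity is then immediate from the band property of Lemma~\ref{lem:hen_band}: if $\varphi|_{A(\cH)} + \rho_\nu|_{A(\cH)} = 0$, then $\rho_\nu|_{A(\cH)}$ is Henkin, forcing $\nu$ to be simultaneously Henkin and totally singular so $\nu = 0$, and hence $\varphi = 0$.

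The substantive task is complete surjectivity with the matching $\oplus_1$ norm, which I would tackle dilation-theoretically in the spirit of \cite{BHM17}. Given $L \in M_n(A(\cH)^*)$ with $\|L\|_{cb} = c$, viewed as a cb map $A(\cH) \to M_n$, I would first extend $L$ to a cb map $\widehat L: B(\cH) \to M_n$ of the same cb norm via Wittstock's theorem, using the complete isometric embedding $A(\cH) \subset \Mult(\cH) \subset B(\cH)$. The Haagerup--Wittstock representation then writes $\widehat L(T) = V^* \pi(T) W$ for a $*$-representation $\pi: B(\cH) \to B(\cK)$ and operators $V, W: \bC^n \to \cK$ with $\|V\|\|W\| = c$. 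Restricting $\pi$ to $C^*(A(\cH))$, I would decompose it in two stages: first, using the exact sequence \eqref{eqn:short_exact}, split $\pi = \pi_{\mathrm{id}} \oplus \pi_\partial$, with $\pi_{\mathrm{id}}$ a multiple of the identity representation (coming from the ideal $K(\cH)$) and $\pi_\partial$ factoring through $C(\partial \bB_d)$; then apply Lemma~\ref{lem:band_decom} to $\pi_\partial$ with the band $\HENH$ to obtain $\pi_\partial = \pi_{\partial, a} \oplus \pi_{\partial, s}$. Splitting $V, W$ along the three resulting orthogonal summands of $\cK$ yields $L = L_{\mathrm{id}} + L_{\partial, a} + L_{\partial, s}$. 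Weak-$*$--weak-$*$ continuity of the identity representation on $\Mult(\cH)$, combined with $\HENH$-absolute continuity of $\pi_{\partial, a}$, puts $\varphi := L_{\mathrm{id}} + L_{\partial, a}$ in $M_n(\Mult(\cH)_*)$, while $\TSH$-absolute continuity of $\pi_{\partial, s}$ identifies $L_{\partial, s}$ with a matrix of totally singular measures $\nu \in M_n(\TSH)$.

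The crucial step is then the norm identity $\|\varphi\|_{M_n(\Mult(\cH)_*)} + \|\nu\|_{M_n(\TSH)} = c$. The $\ge$ direction is automatic from the complete contraction property. For the $\le$ direction, the scalar case ($n=1$) follows cleanly from a Cauchy--Schwarz estimate exploiting the orthogonal vector decompositions $\|v\|^2 = \sum \|v_i\|^2$ and $\|w\|^2 = \sum \|w_i\|^2$ together with $\|v\|\|w\| = c$. The matrix case is the main obstacle, since the analogous operator-norm equality fails and the naive sum of norms from the dilation need not be bounded by $\|V\|\|W\|$. To bridge this gap, I would exploit the completely isometric operator space decomposition $M(\partial \bB_d) = \HENH \oplus_1 \TSH$, which arises from a central projection in the commutative von Neumann algebra $C(\partial \bB_d)^{**}$, to control the boundary contribution, and handle the three-way splitting carefully in the operator space $\oplus_1$ matrix norms, likely by reducing to the scalar case via trace duality or a Paulsen-type CP reduction of $L$. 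With this norm identity in hand, the map is a complete quotient, and together with injectivity it is a completely isometric isomorphism.
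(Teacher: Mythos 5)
Your first two steps (complete contractivity via the universal property of $\oplus_1$ plus Lemma \ref{lem:predual_A(H)}, injectivity via the band property, and the dilation-theoretic three-way splitting of the representation using the exact sequence \eqref{eqn:short_exact} and Lemma \ref{lem:band_decom}) follow the paper's argument essentially verbatim. The gap is exactly at the point you flag as ``the main obstacle'': you never actually produce the matrix-level norm estimate, and the target you set yourself, namely $\|\varphi\|_{M_n(\Mult(\cH)_*)} + \|\nu\|_{M_n(\TSH)} = c$, is the wrong quantity. For $n>1$ the $M_n$-norm of the operator space $1$-direct sum is \emph{not} the sum of the two matrix norms (the sum only dominates it by the triangle inequality), so the sum-of-norms identity is stronger than what is needed, and --- as you correctly observe --- it cannot be extracted from the dilation by a Cauchy--Schwarz argument, since $\|V\|^2 \neq \|V_1\|^2+\|V_2\|^2+\|V_3\|^2$ for the column blocks. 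The proposed remedies (``trace duality'' or ``a Paulsen-type CP reduction'') are not developed and it is not clear either one would work; reducing matrix $\oplus_1$ norms to scalar data is precisely what fails here.

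The missing idea, which makes the matrix case no harder than the scalar case (and in fact renders the Cauchy--Schwarz step unnecessary even for $n=1$), is to compute the $M_n(\oplus_1)$-norm by duality against an $\oplus_\infty$ sum rather than as a sum of norms. One has the isometric inclusion
\begin{equation*}
  M_n\big( \Mult(\cH)_* \oplus_1 \TSH \big) \subset M_n\big( A(\cH)^* \oplus_1 C(\partial \bB_d)^* \big) = \CB\big( A(\cH) \oplus_\infty C(\partial \bB_d), M_n \big),
\end{equation*}
using that $\Mult(\cH)_* \to A(\cH)^*$ and $\TSH \to M(\partial \bB_d)$ are complete isometries and that a $1$-direct sum of complete isometries is a complete isometry. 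Thus it suffices to show that the single map $(f,g) \mapsto \varphi_a(f) + \varphi_s(g)$ on $A(\cH) \oplus_\infty C(\partial \bB_d)$ is completely contractive (or has cb norm at most $c$), and this is immediate from the block factorization
\begin{equation*}
  \varphi_a(f) + \varphi_s(g) =
  \begin{bmatrix} W_1^* & W_2^* & W_3^* \end{bmatrix}
  \begin{bmatrix} M_f^{\kappa} & 0 & 0 \\ 0 & \pi_a(f) & 0 \\ 0 & 0 & \pi_s(g) \end{bmatrix}
  \begin{bmatrix} V_1 \\ V_2 \\ V_3 \end{bmatrix},
\end{equation*}
since the middle block is a completely contractive function of $(f,g)$ and $V,W$ are contractions (take $\|\varphi\|_{cb}=1$ in the Haagerup--Paulsen--Wittstock theorem). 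With this replacement your outline closes up and coincides with the paper's proof; without it, the crucial estimate remains unproved.
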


\begin{proof}
  Let $\Phi$ denote the linear map in the statement.
  Since the inclusion $A(\cH) \subset C(\partial \bB_d)$ is completely contractive,
  the map $\TSH \to A(\cH)^*, \nu \mapsto \rho_\nu$ is completely contractive.
  We already saw in Lemma \ref{lem:predual_A(H)} that the map
  \begin{equation*}
    \Mult(\cH)_* \subset \Mult(\cH)^* \to A(\cH)^*, \quad \varphi \mapsto \varphi \big|_{A(\cH)},
  \end{equation*}
  is a complete isometry. Thus,
  $\Phi$ is completely contractive by the universal property of the $1$-direct sum.
  To see that
  $\Phi$ is injective, let $\varphi \in \Mult(\cH)_*$ and $\nu \in \TSH$ so that
  $\varphi + \rho_\nu = 0 \in A(\cH)^*$. Then $\rho_\nu = - \varphi \in \Mult(\cH)_*$,
  so the measure $\nu$ is $\Mult(\cH)$-Henkin. By definition of $\TSH$, it follows
  that $\nu \bot \nu$, so that $\nu = 0$ and hence also $\varphi = 0$.

  It remains to show that if $\varphi \in M_n(A(\cH)^*)$ has norm $1$, then it has a preimage under $\Phi$
  of norm at most $1$. We first sketch the idea of the proof, assuming for simplicity that $n=1$.
  If $\varphi \in A(\mathcal{H})^*$, then by the Hahn--Banach theorem, $\varphi$ extends to a functional
  on the Toeplitz $C^*$-algebra $C^*(A(\mathcal{H})) \subset B(\mathcal{H})$.
  Since $\cH$ is regular, \cite[Theorem 4.6]{GHX04} shows that there exists a short exact
  sequence
  \begin{equation}
    \label{eqn:short_exact_proof}
    0 \to K(\cH) \to C^*(A(\cH)) \to C(\partial \bB_d) \to 0,
  \end{equation}
  where the first map is the inclusion and the second map sends $M_f + K$ to $f \big|_{\partial \bB_d}$ for $f \in A(\cH)$ and $K \in K(\cH)$.
  This short exact sequence will imply that $\varphi$ decomposes as $\varphi = \varphi_1 + \varphi_2$,
  where $\varphi_1$ extends to a weak-$*$ continuous functional on $B(\mathcal{H})$,
  and $\varphi_2$ is induced by a functional on $C(\partial \mathbb{B}_d)$,
  so $\varphi_2 = \rho_\mu$ is an integration functional for some measure $\mu \in M(\partial \mathbb{B}_d)$.
  Further decomposing the measure $\mu$ into its $\Mult(\mathcal{H})$-Henkin part
  $\mu_a$ and its $\Mult(\mathcal{H})$-totally singular part $\mu_s$,
  we find that $\varphi = (\varphi_1 + \rho_{\mu_a}) + \rho_{\mu_s}$
  is a sum of a functional in $\Mult(\mathcal{H})_*$ and a functional given by a $\Mult(\mathcal{H})$-totally
  singular measure.
  To adapt this idea to elements of $M_n(A(\mathcal{H}))^*$, we will use dilation theory.
  
  Let now $\varphi \in M_n(A(\cH)^*) = \CB(A(\cH),M_n) $ with $\|\varphi\|_{cb} = 1$.
  By the Haagerup--Paulsen--Wittstock
  dilation theorem (Theorems 8.2 and 8.4 in \cite{Paulsen02}), there exist a unital $*$-representation $\pi: C^*(A(\cH)) \to B(\cK)$ on a
  separable Hilbert space $\cK$ and contractions $V,W: \bC^n \to \cK$
  so that $\varphi(f) = W^* \pi(f) V$ for all $f \in A(\cH)$.
  From the short exact sequence \eqref{eqn:short_exact_proof} and
 from a well known result about representations of $C^*$-algebras,
 it follows that $\pi$ splits as an orthogonal direct sum $\pi = \pi_1 \oplus \pi_2$, where $\pi_1$
is unitarily equivalent to a multiple of the identity representation and $\pi_2|_{K(\cH)} = 0$. 
Thus $\pi_2$ factors through the quotient map onto $C(\partial \bB_d)$, and so it can be regarded as a representation of $C(\partial \bB_d)$;
see, for instance, the discussion preceding \cite[Theorem 1.3.4]{Arveson76}.
Lemma \ref{lem:hen_band} and Lemma \ref{lem:band_decom} show that
the representation $\pi_2$ of $C(\partial \bB_d)$ further splits as $\pi_a \oplus \pi_s$
acting on $\cL_a \oplus \cL_s$,
where $\pi_a$ is $\HENH$-absolutely continuous
and $\pi_s$ is $\TSH$-absolutely continuous.

Thus, there exist a countable cardinal $\kappa$ and contractions
\begin{equation*}
  \begin{bmatrix}
    V_1 \\ V_2 \\ V_3
  \end{bmatrix},
  \begin{bmatrix}
    W_1 \\ W_2 \\ W_3
  \end{bmatrix}: \bC^n \to \cH^\kappa \oplus \cL_a \oplus \cL_s
\end{equation*}
so that
\begin{equation*}
  \varphi(f) = W_1^* M_f^{\kappa} V_1 + W_2^* \pi_a(f) V_2 + W_3^* \pi_s(f) V_3 \quad
  (f \in A(\cH)).
\end{equation*}
Define
\begin{equation*}
  \varphi_a: A(\cH) \to M_n, \quad 
  f \mapsto W_1^* M_f^{\kappa} V_1 + W_2^* \pi_a(f) V_2 ,
\end{equation*}
and
\begin{equation*}
  \varphi_s: C(\partial \bB_d) \to M_n, \quad f \mapsto W_3^* \pi_s(f) V_3.
\end{equation*}
Then for all $x,y \in \bC^n$, the functional $f \mapsto \langle \varphi_a(f) x,y \rangle$
belongs to $\Mult(\cH)_*$ and the representing measure of $f \mapsto \langle \varphi_s(f) x,y \rangle$
belongs to $\TSH$. Hence $\varphi_a \in M_n(\Mult(\cH)_*)$ and $\varphi_s$ is
an $n \times n$ matrix of integration functionals given by elements of $\TSH$.

We finish the proof by showing that $(\varphi_a ,\varphi_s)$ has
norm at most $1$ in $M_n(\Mult(\cH)_* \oplus_1 \TSH)$. To see this,
observe that by Lemma \ref{lem:hen_band}, we have the isometric inclusion
\begin{align*}
  M_n( \Mult(\cH)_* \oplus_1 \TSH) &\subset M_n( A(\cH)^* \oplus_1 C(\partial \bB_d)^*) \\
  &= \CB( A(\cH) \oplus_\infty C(\partial \bB_d),M_n),
\end{align*}
so we have to show that the map
\begin{equation*}
  A(\cH) \oplus_{\infty} C(\partial \bB_d) \to M_n, \quad (f,g)
  \mapsto \varphi_a(f) + \varphi_s(g),
\end{equation*}
is completely contractive. But this is immediate from the description
\begin{equation*}
  \varphi_a(f) + \varphi_s(g) =
  \begin{bmatrix}
    W_1^* & W_2^* & W_3^*
  \end{bmatrix}
  \begin{bmatrix}
    M_f^{\kappa} & 0 & 0 \\
    0 & \pi_a(f) &  0 \\
    0 & 0 & \pi_s(g)
  \end{bmatrix}
  \begin{bmatrix}
    V_1 \\ V_2 \\ V_3
  \end{bmatrix}
\end{equation*}
for all $f\in A(\cH)$ and $g \in C(\partial \bB_d)$.
\end{proof}

As explained in the introduction, the preceding result applies in particular to the Hardy space $H^2(\bB_d)$,
yielding the decomposition of the dual of the ball algebra explained in \cite[Chapter 9]{Rudin08},
and to the Drury--Arveson space, thus providing another proof of Corollary 4.3 and Theorem 4.4 of \cite{CD16}.
But it also applies to the Dirichlet space, and more generally the spaces $\cH_s(\bB_d)$.

\begin{rem}
  \label{rem:everything_henkin}
If the coefficients $(a_n)$ in the reproducing kernel $K(z,w) = \sum_{n=0}^\infty a_n \langle z,w \rangle^n$
are summable, then $\cH$ is continuously contained in $C(\ol{\bB_d})$. Hence every measure in $M(\partial \bB_d)$ is $\Mult(\cH)$-Henkin
and $\TSH = \{0\}$. Thus, by Theorem \ref{thm:A_H_decomp}, $A(\cH)^* = \Mult(\cH)_*$, and every functional
on $A(\cH)$ extends to a weak-$*$ continuous functional on $\Mult(\cH)$ in this setting.

Such examples occur among the spaces $\cH_s(\mathbb{D})$, namely if $s < -1$.
In fact, in this case, we have that $\cH_s(\bD)
= \Mult(\cH_s(\bD))$ with equivalence of norms for $s < -1$ (see Proposition 31 and Example 1 on page 99 of \cite{Shields74}).
Hence also $A(\cH_s(\mathbb{D})) = \Mult(\cH_s(\mathbb{D}))$ and $\Mult(\cH_s(\mathbb{D}))$
is a reflexive Banach space. So in this case, weak-$*$ continuous functionals on $\Mult(\cH)$
and functionals on $A(\cH)$ agree, and in fact every functional on $\Mult(\mathcal{H})$ is weak-$*$ continuous.

There are cases where $\Mult(\cH)_* = A(\cH)^*$, but not every functional on $\Mult(\cH)$ is weak-$*$ continuous.
The Salas space $\cH$, which was constructed in \cite{AHM+17a}, is a regular unitarily invariant complete Pick space on $\bD$
for which the corona theorem fails. In this space, $\sum_{n=0}^\infty a_n < \infty$, so $\cH \subset C(\ol{\bD})$ and $A(\cH)^* = \Mult(\cH)_*$,
but there are functionals on $\Mult(\cH)$ that are not weak-$*$ continuous.
For instance, there is a character $\chi$ on $\Mult(\cH)$ such that $\chi(p) = p(1)$ for all polynomials $p$,
but $\chi$ is not given by evaluation at $1$. Indeed, the equality $A(\cH)^* = \Mult(\cH)_*$ merely
implies that for every functional $\varphi$ on $\Mult(\cH)$, there exists a weak-$*$ continuous functional
on $\Mult(\cH)$ that agrees with $\varphi$ on $A(\cH)$. In the case of the character $\chi$, that functional is the functional
of point evaluation at $1$.
\end{rem}

\section{Totally singular measures}

The decomposition of Theorem \ref{thm:A_H_decomp} implies the following functional analytic characterization of $\TSH$. 
This is a special case of a general principle that is contained in a forthcoming paper of the second author with R.\ Clou\^atre.

\begin{lem}
  \label{lem:singular_fa}
  Let $\varphi \in A(\cH)^*$. Then $\varphi = \rho_\nu$ for some $\nu \in \TSH$
  if and only if there exists a sequence $(f_n)$ in the unit ball of $A(\cH)$ that
  tends to zero in the weak-$*$ topology of $\Mult(\cH)$ such that $\dlim_{n \to \infty} \varphi(f_n)
  = \|\varphi\|$. \vspace{-.3ex}
\end{lem}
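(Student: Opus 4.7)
My plan is to obtain the ``if'' direction directly from the $\oplus_1$ decomposition of Theorem~\ref{thm:A_H_decomp}, and to obtain the ``only if'' direction by dualizing that decomposition and applying Goldstine's theorem.

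For the ``if'' direction, suppose the sequence $(f_n)$ exists. Use Theorem~\ref{thm:A_H_decomp} to write $\varphi = \psi + \rho_\nu$ with $\psi \in \Mult(\cH)_*$, $\nu \in \TSH$ and $\|\varphi\| = \|\psi\| + \|\nu\|$. Weak-$*$ continuity of $\psi$ on $\Mult(\cH)$ gives $\psi(f_n) \to 0$, while $|\rho_\nu(f_n)| \le \|\nu\| \|f_n\|_\infty \le \|\nu\|$ since the multiplier norm dominates the supremum norm. Hence $\|\varphi\| = \lim \varphi(f_n) = \lim \rho_\nu(f_n) \le \|\nu\|$, forcing $\psi = 0$ and $\varphi = \rho_\nu$ with $\nu \in \TSH$.

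For the ``only if'' direction, I will dualize Theorem~\ref{thm:A_H_decomp}, using that $(\Mult(\cH)_*)^* = \Mult(\cH)$, to obtain
\[
A(\cH)^{**} \;=\; \Mult(\cH) \oplus_\infty (\TSH)^*,
\]
in which the second summand is the annihilator of $\Mult(\cH)_* \subset A(\cH)^*$ inside $A(\cH)^{**}$. Since $\TSH$ embeds isometrically into $A(\cH)^*$, we have $\|\rho_\nu\|_{A(\cH)^*} = \|\nu\|$, and Hahn--Banach in $(\TSH)^*$ produces $\xi$ with $\|\xi\| \le 1$ and $\xi(\rho_\nu) = \|\nu\| = \|\varphi\|$. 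Read in $A(\cH)^{**}$ via the $\oplus_\infty$-decomposition, $\xi$ still has norm $\le 1$ and annihilates $\Mult(\cH)_*$. Goldstine then supplies a net $(f_\alpha)$ in the unit ball of $A(\cH)$ converging weak-$*$ to $\xi$; pairing against $\Mult(\cH)_*$ forces $f_\alpha \to 0$ in the weak-$*$ topology of $\Mult(\cH)$, while pairing against $\rho_\nu$ yields $\rho_\nu(f_\alpha) \to \|\varphi\|$.

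To convert this net into a sequence I will use separability: $\cH$ is separable (the monomials form an orthogonal basis), hence so is $\cT(\cH)$ in trace norm, and hence also the quotient $\Mult(\cH)_* = \cT(\cH)/\Mult(\cH)_\bot$. Fix a countable dense set $\{\psi_k\} \subset \Mult(\cH)_*$. For each $n$, Goldstine provides $f_n$ in the unit ball of $A(\cH)$ satisfying $|\psi_k(f_n)| < 1/n$ for $k \le n$ and $\bigl|\varphi(f_n) - \|\varphi\|\bigr| < 1/n$; together with the uniform bound $\|f_n\|_{\Mult(\cH)} \le 1$ and the density of $\{\psi_k\}$, this upgrades to $f_n \to 0$ weak-$*$ in $\Mult(\cH)$ and $\varphi(f_n) \to \|\varphi\|$, as required. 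The only delicate step is locating $\xi$ in the annihilator of $\Mult(\cH)_*$, which is exactly what the $\oplus_\infty$-dual of Theorem~\ref{thm:A_H_decomp} provides; the remainder is Hahn--Banach, Goldstine, and routine sequential extraction.
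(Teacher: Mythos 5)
Your proof is correct and follows essentially the same route as the paper: the ``if'' direction via the $\oplus_1$ decomposition of Theorem~\ref{thm:A_H_decomp}, and the ``only if'' direction by locating a norm-one functional in $(\Mult(\cH)_*)^\bot \cong \TSH^*$ inside $A(\cH)^{**}$, applying Goldstine, and using separability of $\Mult(\cH)_*$ to pass to a sequence. The only differences are cosmetic: you invoke Hahn--Banach inside $(\TSH)^*$ rather than directly in the annihilator, and you pick the $f_n$ straight from Goldstine against a countable dense subset of $\Mult(\cH)_*$ instead of extracting from a net via a metric on the unit ball.
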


\begin{proof}
  Suppose first that there exists a sequence $(f_n)$ in the unit ball of $A(\cH)$ that tends
  to zero in the weak-$*$ topology of $\Mult(\cH)$ with $\dlim_{n \to \infty} \varphi(f_n) = \|\varphi\|$.
  By Theorem \ref{thm:A_H_decomp}, there exist $\varphi_a \in \Mult(\cH)_*$ and $\nu \in \TSH$
  such that $\varphi = \varphi_a + \rho_\nu$ and $\|\varphi\| = \|\varphi_a\| + \|\rho_\nu\|$.
  Then $\lim_{n \to \infty} \varphi_a(f_n) = 0$, so
  \begin{equation*}
    \|\varphi_a\| + \|\rho_\nu\|
    = \|\varphi\| = \lim_{n \to \infty} \varphi(f_n)
    \le \limsup_{n \to \infty} |\rho_\nu(f_n)| \le \|\rho_\nu\|,
  \end{equation*}
  so $\varphi_a = 0$ and hence $\varphi = \rho_\nu$.

  Conversely, let $\varphi = \rho_\nu$ for some $\nu \in \TSH$. By Theorem \ref{thm:A_H_decomp},
  \begin{equation*}
    A(\cH)^{**} \cong \Mult(\cH) \oplus_{\infty} \TSH^*,
  \end{equation*}
  and with this identification, $\TSH^* = (\Mult(\cH)_*)^\bot$. Thus,
  by the Hahn--Banach theorem, there exists $\Lambda \in (\Mult(\cH)_*)^\bot \subset A(\cH)^{**}$
  of norm $1$ so that $\Lambda(\varphi) = \|\varphi\|$. Applying Goldstine's theorem (see \cite[Proposition V.4.1]{Conway90} or \cite[II.A.13]{Wojtaszczyk91}), we find a
  net $(g_\alpha)$ in the unit ball of $A(\cH)$ that converges to $\Lambda$ in the weak-$*$ topology
  of $A(\cH)^{**}$. In particular, $\|\varphi\| = \Lambda(\varphi) = \lim_\alpha \varphi(g_\alpha)$.
  Moreover, since $\Lambda \in (\Mult(\cH)_*)^\bot$, it follows that
  if $\psi \in \Mult(\cH)_*$, then $\lim_\alpha \psi(g_\alpha) = \Lambda(\psi) = 0$, so
  the net $(g_\alpha)$ converges to zero in the weak-$*$ topology of $\Mult(\cH)$.

  We finish the proof by using separability of $\cH$ to replace the net $(g_\alpha)$ with a sequence.
  To this end, note that since $\cH$ is separable, so is $\Mult(\cH)_*$, hence there
  exists a metric $d$ on the unit ball of $\Mult(\cH)$ that induces the weak-$*$ topology there.
  For each $n \in \bN$, the preceding paragraph shows that there
  exists an index $\alpha(n)$ so that $d(g_{\alpha(n)},0) < 1/n$ and
  $| \varphi(g_{\alpha(n)}) - \|\varphi\| | < 1 / n$. We set $f_n = g_{\alpha(n)}$. Then the sequence
  $(f_n)$ tends to zero in the weak-$*$ topology of $\Mult(\cH)$ and satisfies $\lim_{n \to \infty} \varphi(f_n) =
  \|\varphi\|$.
\end{proof}

\begin{rem}
We wish to point out a subtlety in the previous argument.
Note that Goldstine's theorem is applied to $A(\cH)^{**}$, but $A(\cH)^{*}$ is typically not separable, 
so the weak-* topology on the unit ball of $A(\cH)^{**}$ is not metrizable. 
Indeed, if there are $\Mult(\cH)$-totally null sets, then each point mass $\delta_z$ for 
$z \in \partial \bB_d$ provides an uncountable family of functionals which are distance $2$ apart.
The argument of the last paragraph only uses the separability of $\Mult(\cH)_*$.
\end{rem}
 
We record the following consequence.
\begin{cor}
 \label{C:singular_fa}
 Let $\nu \in \TSH$. Then there is a sequence $(f_n)$ in the unit ball of $A(\cH)$ that
 tends to zero in the weak-$*$ topology of $\Mult(\cH)$ and converges to $1$ $\nu$-almost everywhere.
\end{cor}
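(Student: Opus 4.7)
The plan is to apply Lemma \ref{lem:singular_fa} to the total variation measure $|\nu|$ and then upgrade $L^1$--convergence against that measure to pointwise a.e.\ convergence along a subsequence. First, since $\TSH$ is a band by Lemma \ref{lem:hen_band}, the assumption $\nu \in \TSH$ gives $\mu := |\nu| \in \TSH$. By the isometric identification in Theorem \ref{thm:A_H_decomp}, the embedding $\TSH \hookrightarrow A(\cH)^*$, $\mu \mapsto \rho_\mu$, is isometric, so
\[
  \|\rho_\mu\|_{A(\cH)^*} = \|\mu\|_{M(\partial \bB_d)} = \mu(\partial \bB_d).
\]

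Next, I would apply Lemma \ref{lem:singular_fa} with $\varphi = \rho_\mu$ to produce a sequence $(f_n)$ in the unit ball of $A(\cH)$ that tends to $0$ in the weak-$*$ topology of $\Mult(\cH)$ and satisfies $\lim_{n \to \infty} \int_{\partial \bB_d} f_n\, d\mu = \mu(\partial \bB_d)$. Since the multiplier norm dominates the supremum norm, $|f_n| \le 1$ on $\partial \bB_d$; in particular $\Re f_n \le 1$ pointwise. Consequently the non-negative integrands $(1 - \Re f_n)$ satisfy $\int (1 - \Re f_n)\, d\mu \to 0$, which means $\Re f_n \to 1$ in $L^1(\mu)$. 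Passing to a subsequence (which I relabel), $\Re f_n \to 1$ $\mu$-almost everywhere. Combined with the bound $|\Im f_n|^2 \le 1 - (\Re f_n)^2$, this forces $\Im f_n \to 0$ $\mu$-a.e., so $f_n \to 1$ $\mu$-a.e., and since $\mu = |\nu|$ this is the desired $\nu$-a.e.\ convergence. The weak-$*$ convergence to $0$ in $\Mult(\cH)$ is preserved by passing to a subsequence, so $(f_n)$ has all required properties.

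There is no real obstacle here once the dual decomposition of Theorem \ref{thm:A_H_decomp} is in hand: the only subtlety is the passage from convergence of integrals to $\mu$-a.e.\ convergence, which is immediate from the one-sided bound $\Re f_n \le 1$ together with $|f_n| \le 1$.
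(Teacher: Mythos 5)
Your proof is correct and follows essentially the same route as the paper: reduce to the positive measure $|\nu|$ using the band property, apply Lemma \ref{lem:singular_fa} together with the isometric embedding of $\TSH$ from Theorem \ref{thm:A_H_decomp}, and upgrade the convergence of integrals to $\nu$-a.e.\ convergence along a subsequence using $|f_n|\le 1$. The only (cosmetic) difference is that the paper deduces $f_n \to 1$ in $L^2(\nu)$ directly from the identity $\|f_n-1\|_{L^2(\nu)}^2 \le 2 - 2\Re\int f_n\,d\nu$, whereas you argue via $L^1$-convergence of $\Re f_n$ and then control $\Im f_n$; both are immediate consequences of the same one-sided bound.
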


\begin{proof}
  The definition of $\TSH$ implies that if $\nu \in \TSH$, then so is $|\nu|$,
  hence we may assume that $\nu$ is a positive probability measure.
  Theorem \ref{thm:A_H_decomp} implies that $\|\rho_\nu\| = 1$, so by Lemma \ref{lem:singular_fa},
  there exists a sequence $(f_n)$ in the unit ball of $A(\cH)$ that tends to zero in the weak-$*$
  topology of $\Mult(\cH)$ such that
  \begin{equation}
    \label{eqn:convergence}
    1 = \lim_{n \to \infty} \int_{\partial \bB_d} f_n \, d \nu.
  \end{equation}
  Since the multiplier norm dominates the supremum norm, $|f_n| \le 1$ on $\partial \bB_d$, so
  \eqref{eqn:convergence} implies that $(f_n)$ converges to $1$ in $L^2(\nu)$.
  Indeed,
  \begin{equation*}
    \|f_n - 1\|_{L^2(\nu)}^2 = 1 + \|f_n\|_{L^2(\nu)}^2 - 2 \Re \int_{\partial \bB_d} f_n \, d\nu
    \le 2 - 2 \Re \int_{\partial \bB_d} f_n \, d \nu.
  \end{equation*}
  Hence, by passing to a subsequence, we may achieve that $(f_n)$ converges to $1$ pointwise
  $\nu$-almost everywhere on $\partial \bB_d$.
\end{proof}

Classically, an $H^\infty(\bB_d)$-totally singular measure is not just singular
with respect to each Henkin measure, but it is in fact concentrated on an $F_\sigma$ set
that is totally null, i.e.\ a set that is null simultaneously for each Henkin measure, see \cite[Chapter 9]{Rudin08}.
We can now generalize this fact. The proof of the following result is modelled after the proof
of \cite[Proposition 5.7]{CD16}.

\begin{prop}
  \label{prop:measure_concentrated}
  Let $\nu \in \TSH$. Then $\nu$ is concentrated on an $F_\sigma$ set that is $\Mult(\cH)$-totally null.
\end{prop}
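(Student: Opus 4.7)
The plan is to start from the sequence $(f_n) \subset A(\cH)$ provided by Corollary \ref{C:singular_fa}, and to use Egorov's theorem to convert the $\nu$-almost everywhere convergence $f_n \to 1$ into uniform convergence on subsets of almost full $\nu$-measure. Since $A(\cH) \subset A(\bB_d)$, the functions $f_n$ are continuous on $\ol{\bB_d}$, so on each such subset we can enlarge slightly to a closed set where $f_n$ is still close to $1$ past a fixed index. Collecting these closed sets gives an $F_\sigma$ carrier for $\nu$; that this $F_\sigma$ set is totally null will then follow from the band property together with the weak-$*$ convergence $f_n \to 0$.

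In detail: because $|\nu| \in \TSH$ by Lemma \ref{lem:hen_band}, we may assume $\nu$ is a positive probability measure. Corollary \ref{C:singular_fa} supplies $(f_n) \subset A(\cH)$ with $\|f_n\|_{\Mult(\cH)} \le 1$, $f_n \to 0$ in the weak-$*$ topology of $\Mult(\cH)$, and $f_n \to 1$ $\nu$-a.e. For each integer $k \ge 2$, Egorov's theorem produces a Borel set $A_k \subset \partial \bB_d$ with $\nu(\partial \bB_d \setminus A_k) < 1/k$ such that $f_n \to 1$ uniformly on $A_k$, and we pick $N_k$ so that $|f_n - 1| \le 1/k$ on $A_k$ for every $n \ge N_k$. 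Setting
\[
F_k := \bigcap_{n \ge N_k} \{ z \in \partial \bB_d : |f_n(z) - 1| \le 1/k \},
\]
each $F_k$ is closed (by continuity of the $f_n$ on $\partial \bB_d$) and contains $A_k$, so
\[
F := \bigcup_{k \ge 2} F_k
\]
is an $F_\sigma$ set with $\nu(F) \ge \nu(F_k) > 1 - 1/k$ for every $k$, forcing $\nu(F) = 1$. Thus $\nu$ is concentrated on $F$.

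To show $F$ is $\Mult(\cH)$-totally null, it suffices by Lemma \ref{lem:TN_elementary} to prove $\mu(F_k) = 0$ for every positive $\Mult(\cH)$-Henkin measure $\mu$ and every $k \ge 2$. The restriction $\mu|_{F_k}$ is absolutely continuous with respect to $\mu$, so by the band property (Lemma \ref{lem:hen_band}) it is again Henkin; via Lemma \ref{lem:predual_A(H)} this gives $\int_{F_k} f_n \, d\mu \to 0$. Combining with the estimate
\[
\Bigl| \int_{F_k} f_n \, d\mu - \mu(F_k) \Bigr|
= \Bigl| \int_{F_k} (f_n - 1) \, d\mu \Bigr|
\le \frac{1}{k} \mu(F_k) \qquad (n \ge N_k),
\]
we obtain $\mu(F_k) \le \frac{1}{k} \mu(F_k)$, forcing $\mu(F_k) = 0$ for every $k \ge 2$. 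Hence $\mu(F) \le \sum_{k \ge 2} \mu(F_k) = 0$, as required.

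The only real obstacle is the $F_\sigma$ requirement. Pointwise a.e.\ convergence alone delivers only a $G_\delta$-type (or worse) description of the concentration set of $\nu$. Egorov's theorem is the decisive tool: by trading pointwise for uniform convergence on sets of almost full $\nu$-measure, it lets us replace the diagonal index used in any $\limsup$-style description by a single fixed index $N_k$, so that a countable intersection of open sets becomes a countable intersection of closed sets, and the union over $k$ is genuinely $F_\sigma$.
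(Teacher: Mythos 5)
Your argument is correct, and it runs on the same engine as the paper's proof: both start from the sequence $(f_n)$ of Corollary \ref{C:singular_fa} and exploit the tension between $f_n \to 1$ on the carrier of $\nu$ and $\int f_n \, d\mu \to 0$ for every Henkin measure $\mu$. Where you diverge is in how the $F_\sigma$ structure is secured. The paper simply takes $E = \{\zeta : \lim_n f_n(\zeta) = 1\}$, shows $E$ is totally null by the dominated convergence theorem applied to any positive Henkin measure concentrated on $E$ (via Lemma \ref{lem:TN_elementary}), and then invokes inner regularity of $\nu$ to extract an $F_\sigma$ subset $F \subset E$ of full $\nu$-measure, which is automatically totally null as a subset of a totally null set. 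You instead build the $F_\sigma$ set directly: Egorov converts a.e.\ convergence into uniform convergence on sets of almost full measure, the fixed index $N_k$ turns these into genuinely closed sets $F_k = \bigcap_{n \ge N_k}\{|f_n - 1| \le 1/k\}$, and you prove $\mu(F_k) = 0$ by a quantitative estimate ($\mu(F_k) \le \tfrac{1}{k}\mu(F_k)$ after passing to the Henkin measure $\mu|_{F_k}$ via the band property) rather than by dominated convergence. Your route costs Egorov and a little extra bookkeeping but avoids any appeal to regularity of $\nu$ and yields an explicit closed-set description of the carrier; the paper's route is shorter because regularity does the topological work for free. Both are sound, and the small imprecision in your appeal to Lemma \ref{lem:TN_elementary} (its item (iii) concerns measures concentrated on the set, whereas you verify the stronger statement $\mu(F_k)=0$ for all positive Henkin $\mu$) is harmless, since that stronger statement gives total nullity directly from the definition together with the band property.
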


\begin{proof}
  We may again assume that $\nu$ is a positive probability measure.
  By Corollary \ref{C:singular_fa}, there exists a sequence $(f_n)$ in the unit ball of $A(\cH)$
  that tends to zero in the weak-$*$ topology of $\Mult(\cH)$ and converges to $1$ $\nu$-almost everywhere.
  In other words, if we define
  \begin{equation*}
    E = \{ \zeta \in \partial \bB_d: \lim_{n \to \infty} f_n(\zeta) = 1 \},
  \end{equation*}
  then $E$ is Borel and $\nu(E) = 1$. We claim that $E$ is $\Mult(\cH)$-totally singular.
  To this end, let $\mu$ be a positive $\Mult(\cH)$-Henkin measure that is concentrated
  on $E$. Then by the dominated convergence theorem,
  \begin{equation*}
    \mu(E) =  \lim_{n \to \infty} \int_E f_n \, d \mu = \lim_{n \to \infty} \int_{\partial \bB_d} f_n \, d \mu
    = 0,
  \end{equation*}
  as $\mu$ is $\Mult(\cH)$-Henkin and $f_n$ tends to zero in the weak-$*$ topology of $\Mult(\cH)$.
  Consequently, Lemma \ref{lem:TN_elementary} implies that $E$ is $\Mult(\cH)$-totally null.
  Finally, regularity of $\nu$ implies that $E$ admits an $F_\sigma$ subset $F$ with $\nu(F) = 1$.
  Then $\nu$ is concentrated on the $\Mult(\cH)$-totally null set $F$, as desired.
\end{proof}

\section{The second dual}

We can now obtain a description of the second dual of $A(\cH)$. 
As explained in Subsection \ref{ss:os}, $(\Mult(\cH)_*)^* = \Mult(\cH)$ completely isometrically.
Thus, by Theorem \ref{thm:A_H_decomp}, we have that, completely isometrically,
\begin{align*}
  A(\cH)^{**} &\cong ( \Mult(\cH)_* \oplus_1 \TSH)^* \\&
  =  (\Mult(\cH)_*)^* \oplus_\infty \TSH^* \\&
  = \Mult(\cH) \oplus_\infty \TSH^*.
\end{align*}
It remains to get a better handle on $\TSH^*$.

By Lemma \ref{lem:hen_band}, every measure in $M(\partial \bB_d)$ decomposes uniquely as the $\ell^1$-sum of a 
$\HENH$ measure and a $\TSH$ measure.
Hence
\[ M(\partial \bB_d) = \HENH \oplus_1 \TSH \]
isometrically. Therefore, we obtain the isometric decomposition
\[ M(\partial \bB_d)^* = \HENH^* \oplus_\infty \TSH^* , \]
which is in fact completely isometric since the map from the left to the right is completely contractive and isometric, so it is completely isometric because $M(\partial \bB_d)^* = C(\partial \bB_d)^{**}$ is a commutative
von Neumann algebra; see \cite[1.2.6]{BL04}.
Moreover, we see that 
\begin{align*}
 \TSH^* &= \HENH^\perp \\
 \intertext{and}
  \HENH^* &= \TSH^\perp .
\end{align*}

The double dual of any C*-algebra $\cA$ is $*$-isomorphic and weak-$*$ homeomorphic to the universal enveloping von Neumann algebra $\kW$ of $\cA$, see  \cite[Theorem III.2.4]{TakesakiI02}.
The universal representation $\pi_u$ of $C(\partial \bB_d)$ is equivalent to the direct sum of the standard representations $\pi_\mu$ on $L^2(\mu)$ as $\mu$ runs over all Borel probability measures on $\partial \bB_d$.
By Lemma~\ref{lem:band_decom}, we obtain a decomposition $\pi_u \simeq \pi_{ua} \oplus \pi_{us}$ where $\pi_{ua}$ is $\HENH$ absolutely continuous
and $\pi_{us}$ is $\TSH$ absolutely continuous.
Indeed, $\pi_{ua}$ is equivalent to the direct sum of all $\pi_\mu$ as $\mu$ runs over all $\HENH$ probability measures;
and  $\pi_{us}$ is equivalent to the direct sum of all $\pi_\mu$ as $\mu$ runs over all $\TSH$ probability measures.
Moreover Lemma~\ref{lem:band_decom} shows  that $\pi_{ua}$ and $\pi_{us}$ are mutually orthogonal. This yields a decomposition
\begin{align*}
 \kW &= \ol{\pi_u(C(\partial \bB_d))}^{w^*} \\&
 = \ol{\pi_{ua}(C(\partial \bB_d))}^{w^*} \oplus_\infty \ol{\pi_{us}(C(\partial \bB_d))}^{w^*} \\&
 =: \kW_a \oplus_\infty \kW_s . 
\end{align*}

We claim that $\kW_a = \HENH^*$ and $\kW_s = \TSH^*$.
By Goldstine's theorem (see \cite[Proposition V.4.1]{Conway90} or \cite[II.A.13]{Wojtaszczyk91}), there is a net $(f_\alpha)$ in the unit ball of $C(\partial \bB_d)$ that converges to
\[ 0 \oplus I \in C(\partial \bB_d)^{**} = \HENH^* \oplus_\infty \TSH^* . \]
Since $\HENH^* \!=\! \TSH^\bot$ and $\TSH^* \!=\! \HENH^\bot$\!\!, we see that
$(f_\alpha)$ converges to $0$ weak-$*$ in $L^\infty(\mu)$ for every $\mu \in \HENH$
and converges to 1 weak-$*$ in $L^\infty(\nu)$ for all $\nu\in\TSH$.
It follows that in $\ol{\pi_u(C(\partial \bB_d))}^{w^*}$, we have $\pi_{ua}(f_\alpha) \to 0$ and $\pi_{us}(f_\alpha) \to I$ in the weak-$*$ topology.
Thus $\pi_u(f_\alpha) \to 0 \oplus I =:P$ in $\kW = \kW_a \oplus_\infty \kW_s$.
Now if $\mu\in\HENH$, then 
\[ \langle \mu,P \rangle = \lim_\alpha  \langle \mu,f_\alpha \rangle = 0 .\]
If $\mu \ge0$, then $\mu$ annihilates every $0 \le A \le P$ in $\kW$.
It follows that $\kW_s \subset \HENH^\perp$.
Similarly, $\kW_a \subset  \TSH^\perp$.
However as noted above 
\[ \HENH^\perp \cap \TSH^\perp \!=\! \TSH^* \cap \HENH^* \!=\! \{0\} .\]
Therefore
\[ \TSH^* = \HENH^\perp = \kW_s .\]

Putting everything together, we obtain a useful description of the second dual.

\begin{thm}\label{T:second_dual}
Let $\cH$ be a regular unitarily invariant space on $\bB_d$. 
Let $\kW = \kW_a \oplus \kW_s$ be the decomposition of the universal enveloping von Neumann algebra of $C(\partial \bB_d)$ into its Henkin and totally singular summands. 
Then there is a completely isometric isomorphism
\begin{equation*}
  A(\cH)^{**} \cong    \Mult(\cH) \oplus_\infty \kW_s .
\end{equation*}
\end{thm}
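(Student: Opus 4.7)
The plan is to obtain the description by dualizing the first-dual decomposition of Theorem~\ref{thm:A_H_decomp} and then identifying the ``totally singular'' summand on the dual side with the commutative von Neumann algebra $\kW_s$. Since $\cH$ is unitarily invariant and regular, the standard predual satisfies $(\Mult(\cH)_*)^{\!*} = \Mult(\cH)$ completely isometrically (as recalled in Subsection~\ref{ss:os}). Dualizing Theorem~\ref{thm:A_H_decomp} and using the completely isometric identity $(X \oplus_1 Y)^* = X^* \oplus_\infty Y^*$ will immediately give
\begin{equation*}
  A(\cH)^{**} \cong \Mult(\cH) \oplus_\infty \TSH^*
\end{equation*}
completely isometrically. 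So the whole task reduces to producing a completely isometric identification $\TSH^* \cong \kW_s$ that is compatible with the other structure.

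For that identification, my plan is to first exploit Lemma~\ref{lem:hen_band}, which gives the complementary band decomposition $M(\partial\bB_d) = \HENH \oplus_1 \TSH$. Dualizing once more yields
\begin{equation*}
  C(\partial\bB_d)^{**} = M(\partial\bB_d)^* = \HENH^* \oplus_\infty \TSH^*,
\end{equation*}
and this decomposition is completely isometric because $C(\partial\bB_d)^{**}$ is a commutative $C^*$-algebra. Next I would recall that $C(\partial\bB_d)^{**}$ is $*$-isomorphic and weak-$*$ homeomorphic to the universal enveloping von Neumann algebra $\kW$ of $C(\partial\bB_d)$. Applying Lemma~\ref{lem:band_decom} to the universal representation $\pi_u$ with respect to the band $\HENH$ splits $\pi_u = \pi_{ua} \oplus \pi_{us}$ into a $\HENH$-absolutely continuous part and a $\TSH$-absolutely continuous part, which in turn gives $\kW = \kW_a \oplus_\infty \kW_s$.

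It remains to match this algebraic decomposition with the functional analytic one, i.e.\ to show $\kW_s = \HENH^\perp = \TSH^*$ (and symmetrically for $\kW_a$). The plan here is to produce, via Goldstine's theorem, a bounded net $(f_\alpha)$ in the unit ball of $C(\partial\bB_d)$ converging weak-$*$ to $0 \oplus I \in \HENH^* \oplus_\infty \TSH^*$; then for every $\mu \in \HENH$ one has $\int f_\alpha\,d\mu \to 0$ while for every $\nu \in \TSH$ one has $\int f_\alpha\,d\nu \to 1$. Under $\pi_u$ these limits force $\pi_u(f_\alpha) \to 0 \oplus I =: P$ weak-$*$ in $\kW_a \oplus_\infty \kW_s$, so $P \in \kW_s$ annihilates every positive $\HENH$ measure; consequently $\kW_s \subseteq \HENH^\perp = \TSH^*$, and symmetrically $\kW_a \subseteq \TSH^\perp = \HENH^*$. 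Since the two ambient decompositions have trivial pairwise intersection between their summands, both inclusions must be equalities.

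The main obstacle I expect is the last identification: one has two a priori different direct-sum decompositions of the same commutative von Neumann algebra, one coming from the band structure on measures and the other coming from the decomposition of the universal representation, and the subtle point is to verify that the central projection separating $\kW_a$ from $\kW_s$ coincides with the Hahn--Banach projection separating $\HENH^*$ from $\TSH^*$. The Goldstine argument above, combined with the orthogonality in Lemma~\ref{lem:band_decom}, is designed to make these projections coincide. Once that is in place, stringing the identifications together gives the completely isometric isomorphism $A(\cH)^{**} \cong \Mult(\cH) \oplus_\infty \kW_s$.
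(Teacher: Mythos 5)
Your proposal is correct and follows essentially the same route as the paper's own proof: dualizing Theorem~\ref{thm:A_H_decomp}, using the band decomposition $M(\partial\bB_d) = \HENH \oplus_1 \TSH$ from Lemma~\ref{lem:hen_band}, decomposing the universal representation via Lemma~\ref{lem:band_decom}, and matching the two decompositions of $\kW$ through the Goldstine net converging to $0 \oplus I$ together with the triviality of the intersections $\HENH^\perp \cap \TSH^\perp$. No substantive differences to report.
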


\section{Pick and peak interpolation in \texorpdfstring{$A(\cH)$}{A(H)}} \label{S:interpolation}

Before we are able to prove our result regarding Pick and peak interpolation,
we require the following generalization of Tietze's extension theorem.
If $X$ is metrizable (which is sufficient for our purposes), this is a special case of Dugundji's extension of Tietze's theorem \cite[Theorem 4.1]{Dugundji51}.
The existence of the extension can also be deduced from a classical theorem of Borsuk that asserts the existence of a contractive linear operator of extension; 
see for instance \cite[Theorem 4.4.4]{AK06}.
Instead, we argue in a more elementary way.

\begin{lem}
  \label{lem:Tietze}
  Let $X$ be a compact Hausdorff space and let $K \subset X$ be a closed
  subset. Then the restriction map
  \begin{equation*}
    R: C(X) \to C(K), \quad f \mapsto f \big|_K,
  \end{equation*}
  is a complete quotient map. In fact, $R^{(n)}$ maps the closed
  unit ball of $M_n(C(X))$ onto the closed unit ball of $M_n(C(K))$ for all $n \in \mathbb{N}$.
\end{lem}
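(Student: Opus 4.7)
The plan is to reduce the matrix-valued statement to the scalar Tietze extension theorem via a continuous retraction of $M_n$ onto its closed unit ball. The natural identification
\[
  M_n(C(X)) = C(X,M_n), \qquad M_n(C(K)) = C(K,M_n),
\]
is isometric, where $M_n$ carries the operator norm and we take the supremum norm over $X$ (resp.\ $K$) of the operator norm. Under this identification, $R^{(n)}$ is simply restriction of an $M_n$-valued continuous function from $X$ to $K$. So I need to show that every $F \in C(K,M_n)$ with $\|F\|_\infty \le 1$ has a norm-preserving continuous extension to $X$.

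First I would apply the ordinary (scalar) Tietze extension theorem entrywise to $F$ to produce some $G \in C(X,M_n)$ with $G|_K = F$. This extension has no reason to respect the unit ball, so the second step is to truncate it. Define
\[
  \rho: M_n \to M_n, \qquad \rho(A) = \frac{A}{\max(1,\|A\|)},
\]
which is the radial retraction onto the closed unit ball of $M_n$. Since the operator norm $A \mapsto \|A\|$ is continuous on $M_n$ and $\max(1,\|A\|) \ge 1 > 0$, the map $\rho$ is continuous, satisfies $\|\rho(A)\| \le 1$ for all $A$, and fixes every $A$ with $\|A\| \le 1$. Set $\widetilde F = \rho \circ G$. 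Then $\widetilde F \in C(X,M_n)$ with $\|\widetilde F\|_\infty \le 1$, and for $k \in K$ we have $\|G(k)\| = \|F(k)\| \le 1$, hence $\widetilde F(k) = \rho(F(k)) = F(k)$. Thus $\widetilde F$ is the required extension.

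This shows that $R^{(n)}$ carries the closed unit ball of $M_n(C(X))$ onto the closed unit ball of $M_n(C(K))$. In particular $R^{(n)}$ is surjective, and a standard open mapping / scaling argument then yields surjectivity of $R^{(n)}$ from the open unit ball onto the open unit ball, so $R$ is a complete quotient map in the sense recalled in Subsection \ref{ss:os}. There is no real obstacle; the only point to notice is that the retraction $\rho$ is continuous in the operator norm, which lets us sidestep the need for Borsuk's or Dugundji's simultaneous extension theorems.
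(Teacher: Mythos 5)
Your proof is correct and follows essentially the same route as the paper: apply the scalar Tietze theorem entrywise and then compose with a continuous retraction of $M_n$ onto its closed unit ball that fixes the ball, so that the extension is unchanged on $K$ and the norm constraint holds. The only difference is the choice of retraction --- the paper uses the functional-calculus truncation $G(x)\,\alpha\bigl(G(x)^*G(x)\bigr)$ with $\alpha(t)=\min(1,t^{-1/2})$, whereas your radial retraction $A \mapsto A/\max(1,\|A\|)$ is equally valid and slightly more elementary, needing only continuity of the operator norm.
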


\begin{proof}
This result is just a complete version of Tietze's extension theorem.
Let $f: K \to M_n$ be continuous with $\|f(x)\| \le 1$ for all $x \in K$.
We show that there exists a continuous extension $F: X \to M_n$ of $f$ that satisfies $\|F(x)\| \le 1$ for all $x \in X$.
Applying Tietze's extension theorem to each matrix entry of $f$, we obtain a continuous extension $G: X \to M_n$ of $f$. 
We now modify $G$ so that it satisfies the norm constraint.
To this end, consider the continuous function
\begin{equation*}
 \alpha: [0,\infty) \to [0,1], \quad t \mapsto
 \begin{cases}
  1, & \text{ if } t \le 1, \\
  t^{-1/2}, & \text{ if } t > 1,
 \end{cases}
\end{equation*}
and define
\begin{equation*}
F: X \to M_n, \quad x \mapsto G(x) \alpha \big( G(x)^* G(x)\big).
\end{equation*}
Standard properties of the continuous functional calculus (see, for instance,
\cite[II.2.3.3]{Blackadar06}) show that $F$ is continuous.
If $x \in K$, then $G(x)^* G(x) \le I$, hence $F(x)= G(x) = f(x)$ for $x \in K$.
Finally, for any $x \in X$, we find that
\begin{equation*}
F(x)^* F(x) = \alpha \big(G(x)^* G(x) \big) G(x)^* G(x) \alpha \big(G(x)^* G(x) \big) = \beta \big(G(x)^* G(x)\big),
\end{equation*}
where $\beta(t) = t \alpha(t)^2$. 
Since $\beta(t) \le 1$ for all $t \ge 0$, we obtain  $F(x)^* F(x) \le I$; whence $\|F(x)\| \le 1$.
\end{proof}

Let $\cH$ be a regular unitarily invariant space on $\bB_d$.
If $F \subset \bB_d$ is a finite subset, we let
\begin{equation*}
  \Mult(\cH) \big|_F = \{ f \big|_F: f \in \Mult(\cH) \}.
\end{equation*}
Then $\Mult(\cH) \big|_F$ is the quotient of $\Mult(\cH)$ by the subspace of all multipliers vanishing on $F$. 
We endow $\Mult(\cH) \big|_F$ with the corresponding quotient norm,
and indeed with the corresponding quotient operator space structure.
Thus if $g \in M_n(\Mult(\cH) \big|_F)$, then
\begin{equation*}
  \|g\|_{M_n(\Mult(\cH)|_F)} = \inf \{ \|f\|_{M_n(\Mult(\cH))} :
  f \big|_F = g \}.
\end{equation*}

We are now ready to prove a result about Pick and peak interpolation
for spaces on the ball.

\begin{thm}
  \label{thm:pick_peak_general}
  Let $\cH$ be a regular unitarily invariant space on $\bB_d$. Let $E \subset \partial \bB_d$
  be compact and $\Mult(\cH)$-totally null and let $F \subset \bB_d$ be finite. Then
  \begin{equation*}
    \Phi: A(\cH) \to \Mult(\cH) \big|_F \oplus_\infty C(E),
    \quad f \mapsto (f \big|_F, f \big|_E),
  \end{equation*}
  is a complete quotient mapping.
\end{thm}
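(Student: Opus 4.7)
The natural approach is by duality. The map $\Phi$ is easily seen to be a complete contraction: the first component factors as $A(\cH) \hookrightarrow \Mult(\cH) \to \Mult(\cH)|_F$ through a complete isometric inclusion and the defining quotient map, and the second component factors as $A(\cH) \hookrightarrow C(\partial \bB_d) \to C(E)$ through a complete contraction (the multiplier norm dominates the supremum norm) and the restriction, which is a complete quotient map by Lemma \ref{lem:Tietze}. Hence by the basic operator space duality recalled in Subsection \ref{ss:os}, showing $\Phi$ is a complete quotient map reduces to showing its adjoint $\Phi^*$ is a complete isometry.

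For the adjoint, identify
\[
  \bigl(\Mult(\cH)|_F \oplus_\infty C(E)\bigr)^* \;=\; \bigl(\Mult(\cH)|_F\bigr)^* \oplus_1 M(E),
\]
and, using Theorem \ref{thm:A_H_decomp}, identify $A(\cH)^* = \Mult(\cH)_* \oplus_1 \TSH$. Then a direct computation shows that $\Phi^*(\psi,\mu)$ equals the restriction to $A(\cH)$ of $\tilde{\psi} + \rho_{\tilde{\mu}}$, where $\tilde{\psi}\in\Mult(\cH)^*$ is the canonical lift of $\psi$ through the quotient map $\Mult(\cH)\to\Mult(\cH)|_F$ and $\tilde{\mu}\in M(\partial\bB_d)$ is the extension of $\mu$ by zero. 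Thus, under the above identifications, $\Phi^*$ is precisely the direct sum of the inclusions
\[
  \iota_1 \colon \bigl(\Mult(\cH)|_F\bigr)^* \hookrightarrow \Mult(\cH)_*,
  \qquad
  \iota_2 \colon M(E) \hookrightarrow \TSH.
\]

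The remaining task is to verify each $\iota_j$ is a complete isometry; the $\oplus_1$-sum then automatically is, by the fact recalled in Subsection \ref{ss:os}. For $\iota_1$: by definition, the quotient map $\Mult(\cH)\to\Mult(\cH)|_F$ is a complete quotient map of operator spaces, so its adjoint is a complete isometry into $\Mult(\cH)^*$; since the image consists of finite linear combinations of point evaluations at $F\subset\bB_d$, which are weak-$*$ continuous, it lands in $\Mult(\cH)_*$, and $\Mult(\cH)_*$ inherits its operator space structure from $\Mult(\cH)^*$, so $\iota_1$ is completely isometric. For $\iota_2$: by Lemma \ref{lem:Tietze} the restriction $C(\partial\bB_d)\to C(E)$ is a complete quotient map, so by duality its adjoint $M(E)\to M(\partial\bB_d)$ is a complete isometry, and since $E$ is $\Mult(\cH)$-totally null, Lemma \ref{lem:TN_elementary} guarantees that the image lies in $\TSH$.

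The main conceptual point — and the only place the hypothesis on $E$ enters — is the observation that $M(E)$ maps into $\TSH$; everything else is formal manipulation of the $\oplus_1/\oplus_\infty$ duality together with Theorem \ref{thm:A_H_decomp}. The only mild technical care needed is in checking that the two preduals $\bigl(\Mult(\cH)|_F\bigr)^*$ and $M(E)$ really sit inside $\Mult(\cH)_*$ and $\TSH$ respectively with matching operator space structures, but this is exactly the content of the previous paragraph.
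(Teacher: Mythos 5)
Your proposal is correct and follows essentially the same route as the paper: dualize, identify $A(\cH)^*$ via Theorem \ref{thm:A_H_decomp}, and check that $\Phi^*$ splits as the adjoint of the quotient map $\Mult(\cH)\to\Mult(\cH)|_F$ (landing in $\Mult(\cH)_*$ via the point evaluations at $F$) together with the inclusion $M(E)\hookrightarrow\TSH$ coming from Lemma \ref{lem:Tietze} and total nullity of $E$. The only difference is cosmetic: you assemble the two summands using the fact that a $\oplus_1$-sum of complete isometries is a complete isometry, while the paper dualizes once more and notes that $\Phi^{**}$ is an $\infty$-sum of complete quotient maps — both are the same formal step recorded in Subsection \ref{ss:os}.
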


\begin{proof}
  It suffices to show that the adjoint map
  \begin{equation*}
    \Phi^* : (\Mult(\cH) \big|_F)^* \oplus_1 M(E) \to A(\cH)^*
  \end{equation*}
  is a complete isometry. By Theorem \ref{thm:A_H_decomp},
  $A(\cH)^* = \Mult(\cH)_* \oplus_1 \TSH$ completely isometrically,
  so we may regard $\Phi^*$ as a map
  \begin{equation*}
    (\Mult(\cH) \big|_F)^* \oplus_1 M(E) \to \Mult(\cH)_* \oplus_1 \TSH.
  \end{equation*}
  We will show that $\Phi^*$ respects this direct sum decomposition
  and is completely isometric on each summand. It then follows
  that $\Phi^*$ is a complete isometry. (This can be seen, for instance,
  by noticing that $\Phi^{**}$, being the $\infty$-direct sum
  of two complete quotient maps, is a complete quotient map.)

  First, we show that $\Phi^*$ maps $(\Mult(\cH) \big|_F)^*$ completely isometrically
  into $\Mult(\cH)_* \subset A(\cH)^*$.
  Let $F = \{z_1,\ldots,z_n\}$
  and let $\tau_{z_i} \in (\Mult(\cH) \big|_F)^*$ and
  $\delta_{z_i} \in \Mult(\cH)_*$
  denote the functionals of evaluation at $z_i$ on $\Mult(\cH) \big|_F$
  and on $\Mult(\cH)$, respectively.
  Since $F$ is finite,
  $(\Mult(\cH) \big|_F)^*$ is spanned by the $\tau_{z_i}$, and clearly
  $\Phi^*(\tau_{z_i}) = \delta_{z_i}$.
  In particular, $\Phi^*$ maps $\Mult(\cH \big|_F)^*$ into $\Mult(\cH)_*$.
  By definition,
  \begin{equation*}
    R: \Mult(\cH) \to \Mult(\cH) \big|_F, \quad f \mapsto f \big|_F,
  \end{equation*}
  is a complete quotient mapping, hence its adjoint $R^*: (\Mult(\cH) \big|_F)^* \to \Mult(\cH)^*$
  is a complete isometry. Since $R^*(\tau_{z_i}) = \delta_{z_i}$,
  it follows that $\Phi^*$ is completely isometric on $\Mult(\cH \big|_F)^*$.

  We finish the proof by showing that $\Phi^*$ maps $M(E)$ completely isometrically
  into $\TSH$.
  Let $\mu \in M(E)$. We may trivially extend $\mu$ to a measure on $\partial \bB_d$,
  which we continue to denote by $\mu$.
  Then $\Phi^*(\mu) \in A(\cH)^*$ is simply the integration functional $\rho_{\mu}$.
  Since $E$ is $\Mult(\cH)$-totally null, $\mu \in \TSH$. Thus, $\Phi^*$ maps $M(E)$
  into $\TSH$.
  To show that $\Phi^*$ is a complete isometry on $M(E)$, it suffices to observe that
  the inclusion $M(E) \subset M(\partial \bB_d)$ is completely isometric,
  which follows from Lemma \ref{lem:Tietze} and duality.
\end{proof}

The Pick property makes it possible to explicitly compute the norm in $\Mult(\cH) \big|_F$
with the help of Pick matrices. In this setting, we therefore obtain a more concrete
version of the last result. In particular, Theorem \ref{thm:pick_peak_intro}
is the special case $n=1$ in the following corollary.

\begin{cor}
  \label{cor:pick_peak}
  Let $\cH$ be a regular unitarily invariant space on $\bB_d$ with kernel $K$
  that has the $M_n$-Pick property.
  Let $z_1,\ldots,z_k \in \bB_d$, $W_1,\ldots,W_k \in M_n$ with
  \begin{equation*}
    \big[ K(z_i,z_j) ( I_n - W_i W_j^*) \big] \ge 0.
  \end{equation*}
  Let $E \subset \partial \bB_d$ be compact and $\Mult(\cH)$-totally null and let $h \in M_n(C(E))$
  with $\|h\|_\infty \le 1$.
  Then for each $\varepsilon > 0$, there exists $f \in M_n(A(\cH))$ with
  \begin{enumerate}[label=\normalfont{(\arabic*)}]
    \item $f(z_i) = W_i$ for $1 \le i \le k$,
    \item $f |_E = h$, and
    \item $\|f\|_{M_n(\Mult(\cH))} \le 1 + \varepsilon$.
  \end{enumerate}
\end{cor}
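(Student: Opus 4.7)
The plan is to derive this corollary as an immediate consequence of Theorem \ref{thm:pick_peak_general} applied to the finite set $F = \{z_1,\ldots,z_k\}$, using the $M_n$-Pick property to convert the Pick matrix positivity hypothesis into a norm bound in the quotient operator space $M_n(\Mult(\cH)|_F)$.

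First I would invoke the $M_n$-Pick property as defined in Subsection \ref{ss:RKHS}. The hypothesis $[K(z_i,z_j)(I_n - W_i W_j^*)] \ge 0$ produces a multiplier $\Phi_0 \in M_n(\Mult(\cH))$ with $\|\Phi_0\|_{M_n(\Mult(\cH))} \le 1$ and $\Phi_0(z_i) = W_i$ for $1 \le i \le k$. By the explicit formula for the quotient operator space norm on $M_n(\Mult(\cH)|_F)$ recorded just before Theorem \ref{thm:pick_peak_general}, this says precisely that the tuple $(W_1,\ldots,W_k)$ has norm at most $1$ in $M_n(\Mult(\cH)|_F)$. Combined with $\|h\|_{M_n(C(E))} \le 1$, the pair $((W_1,\ldots,W_k), h)$ has norm at most $1$ in the $\ell^\infty$-direct sum $M_n(\Mult(\cH)|_F \oplus_\infty C(E))$.

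Now I apply Theorem \ref{thm:pick_peak_general}, which asserts that the restriction map $\Phi$ is a complete quotient mapping. Unwinding the definition from Subsection \ref{ss:os}, this means that for every $n$ the map $\Phi^{(n)}$ is a Banach space quotient map: for any target of norm at most $1$ and any $\varepsilon > 0$, there is a preimage of norm at most $1 + \varepsilon$. Applying this to our element produces $f \in M_n(A(\cH))$ satisfying $f(z_i) = W_i$, $f|_E = h$, and $\|f\|_{M_n(A(\cH))} \le 1 + \varepsilon$. Since the inclusion $A(\cH) \hookrightarrow \Mult(\cH)$ is a complete isometry, this norm coincides with $\|f\|_{M_n(\Mult(\cH))}$, giving all three conclusions.

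There is no real obstacle here; the corollary is simply a translation of Theorem \ref{thm:pick_peak_general} once the Pick matrix condition is re-expressed as a norm bound on the restriction quotient. The only subtlety worth flagging is operator space bookkeeping: one needs the $M_n$-Pick property rather than the mere scalar Pick property to handle matrix-valued targets $W_i \in M_n$ and matrix-valued boundary data $h \in M_n(C(E))$, so that the norm estimates are compatible with the operator space structures on both summands of the $\ell^\infty$-direct sum.
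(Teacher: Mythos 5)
Your argument is correct and is essentially the paper's own proof: use the $M_n$-Pick property to produce a contractive matrix multiplier solving the Pick data alone, then apply Theorem \ref{thm:pick_peak_general} with $F=\{z_1,\ldots,z_k\}$ and the complete quotient property of the restriction map to get $f$ with $\|f\|_{M_n(\Mult(\cH))}\le 1+\varepsilon$. Your extra bookkeeping with the quotient norm on $M_n(\Mult(\cH)|_F)$ and the $\oplus_\infty$ structure just makes explicit what the paper leaves implicit.
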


\begin{proof}
The $M_n$-Pick property implies that there exists $g \in M_n(\Mult(\cH))$
with $\|g\|_{M_n(\Mult(\cH))} \le 1$ so that $g(z_i) = W_i$ for $1 \le i \le k$.
Then Theorem \ref{thm:pick_peak_general}, applied with $F = \{z_1,\ldots,z_k\}$, yields $f \in M_n(A(\cH))$ with 
$f \big|_E = h$, $f (z_i) = g(z_i) = W_i$ for $1 \le i \le k$ and $\|f\|_{M_n(\Mult(\cH))} \le 1 + \varepsilon$.
\end{proof}

\begin{rem}
  As explained in the introduction, one cannot eliminate the $\ep$ in this theorem and the corollary in general.

For the ball algebra, it follows from a theorem of Bishop \cite{Bishop62} that with a totally null set $E \subset \partial \bB_d$ alone,
one can interpolate any function $h\in C(E)$ with a function $f$ in the ball algebra $A(\partial \bB_d)$ of the same norm
and even make $|f(z)| < \|h\|_\infty$ for $z\in \ol{\bB_d}\setminus E$, provided that $h$ is not identically zero.
In \cite{CD16}, in the case of the Drury--Arveson space,
the authors were unable to get $\|f\|_{\Mult(H^2_d)} = \|h\|_\infty$,
but were able to arrange that $|f(z)| < \|h\|$ for $z\in \ol{\bB_d}\setminus E$.
They asked whether one can remove the $\ep$. This question will be resolved positively in Section~\ref{S:peak interpolation}.
\end{rem}

The condition that the set $E \subset \partial \bB_d$ in Theorem \ref{thm:pick_peak_general}
and Corollary \ref{cor:pick_peak} be $\Mult(\cH)$-totally null is necessary.

\begin{prop} \label{P:PPI implies TN}
  Let $\cH$ be a regular unitarily invariant space on $\bB_d$. Let $E \subset \partial \bB_d$
  be a compact set with the property that for every finite set $F \subset \bB_d$,
  the map
  \begin{equation*}
    A(\cH) \to \Mult(\cH) \big|_F \oplus_\infty C(E)
    \quad f \mapsto (f \big|_F, f \big|_E),
  \end{equation*}
  is a quotient mapping. Then $E$ is $\Mult(\cH)$-totally null.
\end{prop}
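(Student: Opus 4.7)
By Lemma~\ref{lem:TN_elementary}, it suffices to prove that every positive $\Mult(\cH)$-Henkin measure $\mu$ concentrated on $E$ is the zero measure; equivalently, $\mu(E) = 0$. The idea is to use the quotient hypothesis to manufacture a bounded sequence in $A(\cH)$ that is identically $1$ on $E$ but tends to $0$ pointwise on $\bB_d$, and then play this against the weak-$*$ continuity of $\rho_\mu$.

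First, I fix a countable dense set $\{w_k\}_{k=1}^\infty \subset \bB_d$ and set $F_n = \{w_1,\ldots,w_n\}$. The element $(0,1) \in \Mult(\cH)\big|_{F_n} \oplus_\infty C(E)$ has norm $1$, so the quotient hypothesis furnishes, for each $n$, a function $f_n \in A(\cH)$ with
\begin{equation*}
  f_n\big|_{F_n} = 0, \qquad f_n\big|_E \equiv 1, \qquad \|f_n\|_{\Mult(\cH)} \le 1 + \tfrac{1}{n}.
\end{equation*}

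Next, I show $(f_n)$ converges to $0$ in the weak-$*$ topology of $\Mult(\cH)$. The sequence is bounded in $\Mult(\cH)$, and on bounded subsets of $\Mult(\cH)$ the weak-$*$ topology coincides with the topology of pointwise convergence on $\bB_d$ (Subsection~\ref{ss:RKHS}). By Banach--Alaoglu, any subnet admits a weak-$*$ convergent subnet with limit $f \in \Mult(\cH)$. For every fixed $k$, one has $f_n(w_k) = 0$ for all $n \ge k$, so $f(w_k) = 0$. Since $f$ is holomorphic on $\bB_d$ and $\{w_k\}$ is dense, $f \equiv 0$. Hence $f_n \to 0$ in the weak-$*$ topology of $\Mult(\cH)$.

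Finally, since $\mu$ is $\Mult(\cH)$-Henkin, the functional $\rho_\mu$ extends to a weak-$*$ continuous functional on $\Mult(\cH)$, so $\rho_\mu(f_n) \to 0$. On the other hand, $f_n\big|_E \equiv 1$ and $\mu$ is concentrated on $E$, so
\begin{equation*}
  \rho_\mu(f_n) = \int_E f_n \, d\mu = \mu(E)
\end{equation*}
for every $n$. Therefore $\mu(E) = 0$, as desired. The main substantive step is the weak-$*$ convergence in the third paragraph, and it is here that the regularity of $\cH$ matters via the fact that the weak-$*$ topology agrees with pointwise convergence on bounded sets; everything else is a direct application of the quotient hypothesis and the definition of a Henkin measure.
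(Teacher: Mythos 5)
Your proof is correct and follows essentially the same route as the paper: choose an increasing exhaustion of a dense subset of $\bB_d$ by finite sets, use the quotient hypothesis to produce a bounded sequence in $A(\cH)$ vanishing on those finite sets and equal (or nearly equal) to $1$ on $E$, deduce weak-$*$ convergence to $0$, and test against a positive Henkin measure concentrated on $E$ via Lemma \ref{lem:TN_elementary}. The only cosmetic differences are that you take the target $(0,1)$ with norm bound $1+\tfrac1n$ while the paper takes $(0,1-\tfrac1n)$ with norm bound $1$, and you spell out the weak-$*$ convergence via cluster points where the paper leaves it as routine.
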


\begin{proof}
  Let $(F_n)_{n=0}^\infty$ be an increasing sequence of finite sets whose union is dense in $\bB_d$.
  By assumption, there exists for each $n \in \bN$ a function $f_n \in A(\cH)$ with
  \begin{equation*}
    f_n \big|_{F_n} = 0, \quad f_n \big|_E = 1 - \tfrac{1}{n} \quad \text { and }\quad \|f_n\|_{\Mult(\cH)} \le 1.
  \end{equation*}
Then $(f_n)$ is a bounded sequence in $\Mult(\cH)$ that tends
to zero pointwise on a dense subset of $\bB_d$, from which it follows that $(f_n)$
tends to zero in the weak-$*$ topology of $\Mult(\cH)$. Clearly, $(f_n)$ tends to $1$ pointwise on $E$.

To show that $E$ is $\Mult(\cH)$-totally null, let $\mu$ be a positive $\Mult(\cH)$-Henkin measure
that is concentrated on $E$. Then by the dominated convergence theorem,
\begin{equation*}
  \mu(E) = \lim_{n \to \infty} \int_{E} f_n \, d \mu =
  \lim_{n \to \infty} \int_{\partial \bB_d} f_n \, d \mu = 0.
\end{equation*}
Therefore, Lemma \ref{lem:TN_elementary} implies that $E$ is $\Mult(\cH)$-totally null.
\end{proof}

In the case when $\mathcal{H}$ admits non-empty $\Mult(\mathcal{H})$-totally null sets, we will
establish a significant strengthening of the preceding result in Theorem \ref{T:interpolation sets}.

We can also prove a version of Theorem \ref{thm:pick_peak_general} in which the finite set $F \subset \bB_d$
is replaced with an interpolating sequence.
Recall that a sequence $(z_n)$ in $\bB_d$
is said to be interpolating for $\Mult(\cH)$ if the map
\begin{equation*}
  \Phi: \Mult(\cH) \to \ell^\infty, \quad f \mapsto (f(z_n)),
\end{equation*}
is surjective.
Interpolating sequences in complete Pick spaces were characterized by Aleman, M\textsuperscript{c}Carthy, Richter and the second
author in \cite{AHM+17}, which also contains more background on this topic.
The following result generalizes \cite[Theorem 5.12]{CD18}, with
a somewhat simpler proof.

\begin{thm}
  \label{thm:IS_TN}
  Let $\cH$ be a regular unitarily invariant space on $\bB_d$. Let $K \subset \ol{\bB_d}$
  be a compact set satisfying
  \begin{enumerate}[label=\normalfont{(\arabic*)}]
    \item $K \cap \bB_d$ is an interpolating sequence for $\Mult(\cH)$, and
    \item $K \cap \partial \bB_d$ is $\Mult(\cH)$-totally null.
  \end{enumerate}
  Then the restriction map $R: A(\cH) \to C(K)$ is surjective.
\end{thm}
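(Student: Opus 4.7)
The plan is to use duality: I will show that the adjoint $R^* \colon M(K) \to A(\cH)^*$ is bounded below, which is equivalent to surjectivity of $R \colon A(\cH) \to C(K)$. Write $S = K \cap \bB_d$ for the interpolating sequence (necessarily discrete in $\bB_d$) and $E = K \cap \partial \bB_d$ for the totally null boundary part. Because $S$ and $E$ are disjoint Borel subsets of $K$, every $\mu \in M(K)$ splits uniquely as $\mu = \mu_S + \mu_E$ with $\|\mu\|_{M(K)} = \|\mu_S\| + \|\mu_E\|$, where $\mu_S = \sum_n c_n \delta_{z_n}$ is purely atomic on $S = \{z_n\}$ with $\sum_n |c_n| = \|\mu_S\|$.

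The crucial observation is that $R^*\mu_S$ and $R^*\mu_E$ sit in the two canonical summands of the decomposition $A(\cH)^* = \Mult(\cH)_* \oplus_1 \TSH$ provided by Theorem~\ref{thm:A_H_decomp}. Indeed, each point evaluation $\delta_{z_n}$ is weak-$*$ continuous of norm one on $\Mult(\cH)$, so the $\ell^1$-convergent series $R^* \mu_S = \sum_n c_n \delta_{z_n}$ converges in norm inside $\Mult(\cH)_*$. On the other hand, $R^* \mu_E$ is integration against $\mu_E$, regarded as a measure on $\partial \bB_d$ concentrated on the totally null set $E$, and therefore lies in $\TSH$. Consequently, by the isometric decomposition of Theorem~\ref{thm:A_H_decomp},
\[ \|R^*\mu\|_{A(\cH)^*} = \|R^*\mu_S\|_{\Mult(\cH)_*} + \|R^*\mu_E\|_{\TSH}. \]

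It then remains to bound each summand from below. The $\TSH$ component of Theorem~\ref{thm:A_H_decomp} is realized isometrically, so $\|R^*\mu_E\|_{\TSH} = \|\mu_E\|_{M(K)}$. For the Henkin component, the hypothesis that $(z_n)$ is $\Mult(\cH)$-interpolating together with the open mapping theorem yields a constant $M \ge 1$ such that the surjection $\Mult(\cH) \to \ell^\infty$, $f \mapsto (f(z_n))$, admits a bounded right inverse of norm at most $M$. Dualising, the adjoint $\ell^1 \to \Mult(\cH)^*$, $(c_n) \mapsto \sum_n c_n \delta_{z_n}$, is bounded below by $1/M$, and this norm is unchanged when we regard the image as sitting in $\Mult(\cH)_* \subset \Mult(\cH)^*$. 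Hence
\[ \|R^*\mu_S\|_{\Mult(\cH)_*} \ge \tfrac{1}{M} \sum_n |c_n| = \tfrac{1}{M}\|\mu_S\|_{M(K)}. \]
Adding the two bounds gives $\|R^*\mu\|_{A(\cH)^*} \ge \tfrac{1}{M}\|\mu\|_{M(K)}$, so $R^*$ is bounded below and $R$ is surjective.

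The main point requiring care is verifying the splitting between Henkin and totally singular summands, which depends on both hypotheses working in tandem: the interpolating condition on $S$ places the atomic part of $\mu$ into $\Mult(\cH)_*$ with controlled norm, while the totally null condition on $E$ places the boundary part into $\TSH$. Once this is in hand, the additivity of norms supplied by Theorem~\ref{thm:A_H_decomp} makes the final estimate a one-line computation, and no peak interpolation argument is needed beyond what is encoded in the dual decomposition.
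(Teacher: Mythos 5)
Your argument is correct and follows essentially the same route as the paper: dualize, split $M(K)$ as $M(K\cap\bB_d)\oplus_1 M(K\cap\partial\bB_d)$, use Theorem~\ref{thm:A_H_decomp} to see that the atomic part lands in $\Mult(\cH)_*$ (bounded below via the adjoint of the surjection $\Mult(\cH)\to\ell^\infty$) and the boundary part lands isometrically in $\TSH$. One cosmetic point: the open mapping theorem does not literally give a linear bounded right inverse of $f\mapsto(f(z_n))$, but this is harmless since the lower bound $\|\Phi^*\psi\|\ge\|\psi\|/M$ follows directly from surjectivity and the interpolation constant, which is exactly what the paper uses.
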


\begin{proof}
  By duality, it suffices to show that the adjoint map $R^*: M(K) \to A(\cH)^*$ is bounded below.
  Set $\Lambda = K \cap \bB_d$ and $E = K \cap \partial \bB_d$.
  There is an isometric isomorphism
  \begin{equation*}
    M(K) \to M(\Lambda) \oplus_1 M(E), \quad \mu \mapsto (\mu \big|_\Lambda, \mu \big|_E),
  \end{equation*}
  where $\mu \big|_A(B) = \mu(A \cap B)$ for Borel sets $A,B \subset \ol{\bB_d}$.
  On the other hand, Theorem \ref{thm:A_H_decomp} shows that $A(\cH)^* = \Mult(\cH)_* \oplus_1 \TSH$,
  hence we may regard $R^*$ as a map
  \begin{equation*}
    R^*: M(\Lambda) \oplus_1 M(E) \to \Mult(\cH)_* \oplus_1 \TSH.
  \end{equation*}
  We will show that $R^*$ respects the direct sum and is bounded below on each summand.

  Since $E$ is $\Mult(\cH)$-totally null by assumption, we see exactly as in the last paragraph of the proof of Theorem \ref{thm:pick_peak_general}
  that $R^*$ maps $M(E)$ isometrically into $\TSH$. Since $\Lambda = (z_n)_{n=0}^\infty$ is a sequence, we may identify
  $M(\Lambda) = \ell^1(\Lambda) = \ell^1(\bN)$, and under this identification, $R^*$ maps $e_n$ to $\delta_{z_n}$, the character
  of evaluation at $z_n \in \bB_d$. Hence, $R^*$ maps $M(\Lambda)$ into $\Mult(\cH)_*$.
  Finally, to see that $R^*$ is bounded below on $M(\Lambda)$, we use that $\Lambda$ is an interpolating sequence.
  This means that the map
  \begin{equation*}
    \Phi: \Mult(\cH) \to \ell^\infty, \quad f \mapsto (f(z_n)),
  \end{equation*}
  is surjective, hence the adjoint $\Phi^*$ is bounded below.
  Moreover, $\Phi$ is weak-$*$--weak-$*$ continuous, hence $\Phi^*$ maps $\ell^1(\bN)$ into $\Mult(\cH)_*$.
  Observe that $\Phi^*(e_n) = \delta_{z_n}$ for all $n \in \bN$, that is, $R^*$ agrees with $\Phi^*$ on $M(\Lambda)$. In particular, $R^*$ is bounded below,
  as desired.
\end{proof}

\begin{rem}
  If $(z_n)$ is an interpolating sequence with corresponding surjection $\Phi: \Mult(\cH) \to \ell^\infty$,
  then the norm of the inverse of the induced isomorphism $\Mult(\cH) / \ker(\Phi) \to \ell^\infty$ is called
  the interpolation constant $\gamma$. Since $\Phi$ is contractive, $\gamma \ge 1$. For $H^2_d$, it was observed in \cite{CD18} that in
  the setting of Theorem \ref{thm:IS_TN}, the norm of the inverse of the induced isomorphism $A(\cH) / \ker(R) \to C(K)$
  is at most $2 \gamma + 1$. The above proof shows that this norm is in fact equal to $\gamma$.
\end{rem}

\begin{rem}
  Lemma \ref{lem:completely_surjective} below shows that $R$ is in fact completely surjective.
\end{rem}

\section{Ideals}
\label{S:ideals}

In \cite{CD18}, a detailed analysis was made of the ideals in $A(H^2_d)$ and their zero sets.
Here we will establish a few of these results which require additional work.

The first thing one needs is an analogue of the F.\ and M.\ Riesz Theorem. 
This was established for Drury-Arveson space in \cite[Theorem 4.7]{CD16}, but the proof was quite different.

\begin{prop}\label{P:FMRiesz}
Let $\cH$ be a regular unitarily invariant space on $\bB_d$.
Let $\cJ$ be a closed ideal in $A(\cH)$, and let $\psi$ be a linear functional in $\cJ^\perp$.
Decompose $\psi = \psi_a + \psi_s$ where $\psi_a \in \Mult(\cH)_*$ and $\psi_s \in \TS(\Mult(\cH))$.
Then $\psi_a$ and $\psi_s$ both annihilate $\cJ$.
\end{prop}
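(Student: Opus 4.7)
The plan is to reduce the statement to showing that $\psi_s$ annihilates $\cJ$; once this is done, the identity $\psi_a = \psi - \psi_s$ combined with $\psi(\cJ) = 0$ immediately gives $\psi_a(\cJ) = 0$ as well. By the dual space decomposition of Theorem~\ref{thm:A_H_decomp}, we may write $\psi_s = \rho_{\nu_s}$ for some $\nu_s \in \TS(\Mult(\cH))$.

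The key tool is Corollary~\ref{C:singular_fa}: applied to $\nu_s$, it produces a sequence $(g_n)$ in the closed unit ball of $A(\cH)$ that tends to $0$ in the weak-$*$ topology of $\Mult(\cH)$ and converges to $1$ pointwise $|\nu_s|$-almost everywhere on $\partial \bB_d$. For arbitrary $f \in \cJ$, the idea is to use $g_n f$ as a test element. Since $A(\cH)$ is an algebra and $\cJ$ is an ideal in $A(\cH)$, we have $g_n f \in \cJ$, so $\psi(g_n f) = 0$, i.e.
\[
  \psi_a(g_n f) + \psi_s(g_n f) = 0 \qfor\text{ every } n.
\]

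I would then pass to the limit in each summand. For the first, note that $(g_n f)$ is bounded in $\Mult(\cH)$ by $\|f\|_{\Mult(\cH)}$, and $g_n(w) f(w) \to 0$ for every $w \in \bB_d$, since point evaluation at any $w \in \bB_d$ is a weak-$*$ continuous functional on $\Mult(\cH)$. Because the weak-$*$ topology on bounded subsets of $\Mult(\cH)$ coincides with pointwise convergence on $\bB_d$, it follows that $g_n f \to 0$ weak-$*$, and hence $\psi_a(g_n f) \to 0$ since $\psi_a \in \Mult(\cH)_*$. For the second summand, $\psi_s(g_n f) = \int_{\partial \bB_d} g_n f \, d\nu_s$; dominated convergence applies, since $|g_n f| \le \|f\|_\infty$ on $\partial \bB_d$ (the multiplier norm dominating the sup norm) and $g_n f \to f$ pointwise $|\nu_s|$-a.e.\ on $\partial\bB_d$. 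Therefore $\psi_s(g_n f) \to \psi_s(f)$, and taking the limit in the displayed identity yields $\psi_s(f) = 0$.

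The only delicate point is the verification that $g_n f \to 0$ weak-$*$ in $\Mult(\cH)$; this hinges on the characterization of the weak-$*$ topology on bounded sets as pointwise convergence on $\bB_d$, rather than any general continuity of multiplication. Aside from this, the argument is a straightforward splitting that mirrors the classical F.\ and M.\ Riesz theorem, with Corollary~\ref{C:singular_fa} playing the role of the Riesz brothers' construction.
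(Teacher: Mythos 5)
Your argument is correct, but it takes a genuinely different route from the paper. You prove the result directly on $A(\cH)$: writing $\psi_s=\rho_{\nu_s}$ with $\nu_s\in\TSH$, you invoke Corollary~\ref{C:singular_fa} to get a sequence $(g_n)$ in the unit ball of $A(\cH)$ with $g_n\to 0$ weak-$*$ in $\Mult(\cH)$ and $g_n\to 1$ $|\nu_s|$-a.e., use the ideal property to get $\psi_a(g_nf)+\psi_s(g_nf)=0$ for $f\in\cJ$, and pass to the limit (boundedness plus pointwise convergence on $\bB_d$ gives $g_nf\to 0$ weak-$*$, so $\psi_a(g_nf)\to 0$; dominated convergence on $\partial\bB_d$ gives $\psi_s(g_nf)\to\psi_s(f)$), which is exactly the classical F.\ and M.\ Riesz mechanism. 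The paper instead argues in the bidual: by Theorem~\ref{T:second_dual}, $A(\cH)^{**}\cong\Mult(\cH)\oplus_\infty\kW_s$, and since multiplication in $A(\cH)^{**}$ is separately weak-$*$ continuous, $\cJ^{\perp\perp}$ is a weak-$*$ closed ideal which the central projection $0\oplus I$ splits as $\cJ_a\oplus_\infty\cJ_s$; the conclusion then drops out by duality. Your approach is more concrete and avoids the bidual algebra structure at this stage (though Corollary~\ref{C:singular_fa} itself ultimately rests on Theorem~\ref{thm:A_H_decomp} and Goldstine's theorem), while the paper's argument is shorter once Theorem~\ref{T:second_dual} is in hand and yields the extra structural information that $\cJ^{\perp\perp}$ itself decomposes compatibly with the Henkin/singular splitting, which is what the remark following the proposition exploits. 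One small point of care in your write-up, which you handled correctly: the convergence $g_nf\to 0$ weak-$*$ must be justified via the coincidence of the weak-$*$ topology with pointwise convergence on bounded subsets of $\Mult(\cH)$, not via any general weak-$*$ continuity of multiplication.
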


\begin{proof}
  Observe that $A(\cH)^{**} \subset B(\cH)^{**}$ is an algebra. Moreover,
  since multiplication in $A(\cH)^{**}$ is separately weak-$*$ continuous,
  $\cJ^{**} = \cJ^{\perp\perp}$ is an ideal in $A(\cH)^{**}$.
By Theorem~\ref{T:second_dual}, $A(\cH)^{**} \cong    \Mult(\cH) \oplus_\infty \kW_s$.
In particular, the element $0\oplus I$ lies in $A(\cH)^{**}$.
It follows that 
\[ \cJ^{\bot \bot} = \cJ^{\bot \bot} (I \oplus 0) \oplus \cJ^{\bot \bot} (0\oplus I) =: \cJ_a \oplus_\infty \cJ_s \]
decomposes as a direct sum.
Let $\psi \in \cJ^\bot$, and suppose that $\Lambda = \Lambda_a + \Lambda_s$ belongs to $\cJ^{\bot \bot}$. 
Then $\Lambda_a,\Lambda_s \in \cJ^{\bot \bot}$, so $\Lambda(\psi_a) = \Lambda_a(\psi) = 0$.
Thus, $\psi_a \in \cJ^\bot$.
Hence also $\psi_s = \psi - \psi_a \in \mathcal{J}^\bot$.
\end{proof}

\begin{rem}
  With notation as in the last proof, it follows that if $\psi\in A(\cH)^*$, then 
 \[
  \psi_a(f) = \hat\psi((I \oplus 0)f) \qand \psi_s(f) = \hat\psi((0\oplus I)f) ,
 \]
 where $\hat\psi$ is the canonical extension of $\psi$ to an element of $A(\cH)^{***}$.
\end{rem}

Suppose that $\cJ$ is an ideal in $A(\cH)$. Define the zero set 
\[ Z(\cJ) :=\{ z \in \ol{\bB_d} : f(z) = 0 \tfa f\in\cJ \} .\]
Conversely, if $E$ is a closed subset of $\ol{\bB_d}$, let 
\[ \cI(E) = \{ f\in A(\cH) : f|_E = 0 \} .\]
For $E \subset \partial \bB_d$, we also write
\begin{equation*}
  \TS(E) = \{ \mu \in \TSH : \mu \text{ is supported on } E \} .
\end{equation*}
Set $\tilde\cJ$ to be the weak-$*$ closure of $\cJ$ in $\Mult(\cH)$.

The following is a consequence of the previous result.

\begin{cor}\label{C:Jperp}
Let $\cJ$ be a closed ideal in $A(\cH)$, and let $E = Z(\cJ) \cap \partial\bB_d$. Then
\[ \cJ^\perp = \tilde\cJ_\perp \oplus_1 \TS(E) .\]
\end{cor}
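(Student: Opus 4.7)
The plan is to use the F.\ and M.\ Riesz--type decomposition from Proposition~\ref{P:FMRiesz} together with Theorem~\ref{thm:A_H_decomp} to split $\cJ^\perp$ into an absolutely continuous summand and a totally singular summand, and then to identify the two summands separately. By Theorem~\ref{thm:A_H_decomp}, we have the isometric decomposition $A(\cH)^* = \Mult(\cH)_* \oplus_1 \TSH$. Proposition~\ref{P:FMRiesz} says that if $\psi = \psi_a + \psi_s \in \cJ^\perp$ with $\psi_a \in \Mult(\cH)_*$ and $\psi_s \in \TSH$, then both summands already annihilate $\cJ$. Consequently
\[ \cJ^\perp \;=\; (\cJ^\perp \cap \Mult(\cH)_*) \;\oplus_1\; (\cJ^\perp \cap \TSH), \]
and it remains to identify the two intersections.

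For the first intersection, every functional in $\Mult(\cH)_*$ is weak-$*$ continuous on $\Mult(\cH)$, so it annihilates $\cJ$ if and only if it annihilates its weak-$*$ closure $\tilde\cJ$. This gives $\cJ^\perp \cap \Mult(\cH)_* = \tilde\cJ_\perp$ immediately.

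For the second intersection, the inclusion $\TS(E) \subseteq \cJ^\perp \cap \TSH$ is routine: every $f \in \cJ$ extends continuously to $\ol{\bB_d}$ and vanishes on $E = Z(\cJ) \cap \partial\bB_d$, so $\int f \, d\mu = 0$ whenever $\mu$ is supported on $E$. The heart of the proof is the reverse inclusion. Given $\mu \in \cJ^\perp \cap \TSH$ and $f \in \cJ$, the ideal property yields $fg \in \cJ$ for every $g \in A(\cH)$, so $\int fg \, d\mu = 0$, i.e.\ the measure $f\mu$ annihilates $A(\cH)$. Because $\TSH$ is a band (Lemma~\ref{lem:hen_band}) and $f|_{\partial\bB_d}$ is bounded, $f\mu$ is absolutely continuous with respect to $\mu$ and hence lies in $\TSH$. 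But Theorem~\ref{thm:A_H_decomp} realizes $\TSH$ isometrically inside $A(\cH)^*$ via $\nu \mapsto \rho_\nu$, so $f\mu = 0$ as a measure; equivalently, $f = 0$ $|\mu|$-almost everywhere on $\partial\bB_d$.

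To finish, I would use separability: since the polynomials are dense in $A(\cH)$, both $A(\cH)$ and $\cJ$ are separable. Pick a countable dense set $\{f_n\} \subset \cJ$; then continuity on $\ol{\bB_d}$ gives $\partial\bB_d \setminus E = \bigcup_n \{z \in \partial\bB_d : f_n(z) \neq 0\}$, a countable union of $|\mu|$-null sets by the previous paragraph. Hence $|\mu|$ is concentrated on $E$, so $\mu \in \TS(E)$. The main subtle step is the identification $f\mu = 0$: it relies crucially on the fact that the natural map $\TSH \hookrightarrow A(\cH)^*$ from Theorem~\ref{thm:A_H_decomp} is injective, which in turn rests on the full decomposition theorem; once that is in hand the rest is bookkeeping.
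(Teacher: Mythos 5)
Your argument is correct and follows essentially the same route as the paper: decompose $\psi$ via Theorem \ref{thm:A_H_decomp}, invoke Proposition \ref{P:FMRiesz} so that each summand annihilates $\cJ$, identify the Henkin part with $\tilde\cJ_\perp$ by weak-$*$ continuity, and use the ideal property, the band property of $\TSH$, and the injectivity of $\TSH \hookrightarrow A(\cH)^*$ from Theorem \ref{thm:A_H_decomp} to conclude $f\mu = 0$ for every $f \in \cJ$. The only cosmetic difference is the last step: the paper observes that $\operatorname{supp}\mu \subseteq Z(f)$ for each $f \in \cJ$ and intersects these closed sets, whereas you use separability of $\cJ$ to write $\partial\bB_d \setminus E$ as a countable union of $|\mu|$-null sets; both are valid.
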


\begin{proof}
  It is clear that $\cJ^\bot \supset \tilde \cJ_{\bot} \oplus_1 \TS(E)$.
  Conversely, let $\psi \in \cJ^\bot$ have a decomposition
  \begin{equation*}
    \psi = \varphi + \nu \in \Mult(\cH)_* \oplus_1 \TSH.
  \end{equation*}
  Then $\varphi,\nu \in \cJ^\bot$ by Proposition \ref{P:FMRiesz}.
  Since $\varphi$ extends to a weak-$*$ continuous functional on $\Mult(\cH)$, it belongs to $\tilde \cJ_\bot$.

  It remains to show that $\nu$ is supported on $E$.
If $f\in\cJ$, then $f\nu$ annihilates $A(\cH)$; and so is the zero functional.
Since $f\nu \ll \nu$, this measure is totally singular by Lemma~\ref{lem:hen_band}.
Thus by Theorem~\ref{thm:A_H_decomp}, $f\nu = 0$.
That means that the support of $\nu$ is contained in $Z(f)$.
Since this is true for all $f \in \cJ$, it follows that the support of $\nu$ is contained in $E$,
as desired.
\end{proof}

This allows us to obtain an analogue of the Rudin-Carleson theorem describing the ideals of the disk algebra \cite{Rudin57, Carleson57}.
In the case of $A(\bD)$, we know that the weak-$*$ closed ideals of $H^\infty(\bD)$ are of the form $\omega H^\infty(\bD)$ for all inner functions $\omega$,
which yields a precise description of ideals of $A(\bD)$.
The somewhat less precise analogues were established for the ball algebra by Hedenmalm \cite{Hedenmalm89} 
and for Drury-Arveson space in \cite[Theorem 4.1]{CD18}.

\begin{thm}\label{T:ideals}
Let $\cH$ be a regular unitarily invariant space on $\bB_d$, and let $\cJ$ be a closed ideal in $A(\cH)$.
Let $E = Z(\cJ) \cap \partial\bB_d$, and let $\tilde\cJ$ be the weak-$*$ closure of $\cJ$ in $\Mult(\cH)$.
Then
\[ \cJ = \tilde\cJ \cap \cI(E) .\]
\end{thm}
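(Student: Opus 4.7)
The plan is to establish the non-trivial inclusion $\tilde\cJ \cap \cI(E) \subset \cJ$ by a standard Hahn--Banach argument, leveraging the F.\ and M.\ Riesz-type decomposition recorded in Corollary~\ref{C:Jperp}. The reverse inclusion is essentially built into the definitions: $\cJ \subset \tilde\cJ$ by definition of weak-$*$ closure, and $\cJ \subset \cI(E)$ because $E \subset Z(\cJ)$, so every $f \in \cJ$ vanishes on $E$.

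For the forward direction, since $\cJ$ is norm closed, Hahn--Banach gives $\cJ = {}^{\perp}(\cJ^{\perp})$, so it suffices to show that every $\psi \in \cJ^{\perp}$ annihilates every $f \in \tilde\cJ \cap \cI(E)$. Fix such $\psi$ and $f$. By Corollary~\ref{C:Jperp}, we may decompose
\[
 \psi = \varphi + \rho_\nu, \qquad \varphi \in \tilde\cJ_{\perp} \subset \Mult(\cH)_*, \quad \nu \in \TS(E).
\]
Here $\varphi$ is, by definition, the restriction to $A(\cH)$ of a weak-$*$ continuous functional on $\Mult(\cH)$ that annihilates the weak-$*$ closed subspace $\tilde\cJ$. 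Since $f \in \tilde\cJ$, we conclude $\varphi(f) = 0$.

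It remains to see $\rho_\nu(f) = 0$. Because $A(\cH) \subset A(\bB_d) \subset C(\ol{\bB_d})$, the function $f$ extends continuously to $\ol{\bB_d}$, and $f \in \cI(E)$ means $f|_E = 0$. As $\nu \in \TS(E)$ is supported on $E$, we have
\[
 \rho_\nu(f) = \int_{\partial \bB_d} f \, d\nu = \int_{E} f \, d\nu = 0.
\]
Combining, $\psi(f) = \varphi(f) + \rho_\nu(f) = 0$, which finishes the argument. There is no real obstacle here: all of the conceptual work has already been packaged into Theorem~\ref{thm:A_H_decomp} and Corollary~\ref{C:Jperp}. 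The key point is that the Henkin/totally-singular splitting of $A(\cH)^*$ respects the annihilator of any closed ideal, which is precisely what converts the abstract norm-closed ideal $\cJ$ into the intersection of its weak-$*$ closure with the zero-set-type ideal $\cI(E)$.
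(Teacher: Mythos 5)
Your proposal is correct and follows essentially the same route as the paper: the paper also reduces to Corollary~\ref{C:Jperp} and notes that $\cJ^\perp = \tilde\cJ_\perp \oplus_1 \TS(E) \subset (\tilde\cJ \cap \cI(E))^\perp$, concluding by Hahn--Banach. Your write-up simply spells out why each summand annihilates $\tilde\cJ \cap \cI(E)$, which is exactly the content of that annihilator inclusion.
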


\begin{proof}
  Clearly, $\cJ \subset \tilde \cJ \cap \cI(E)$. Conversely, by
Corollary~\ref{C:Jperp},
  \begin{equation*}
    \cJ^\bot = \tilde \cJ_\bot \oplus_1 \TS(E) \subset (\tilde \cJ \cap \cI(E))^\bot,
  \end{equation*}
  hence $\cJ = \tilde\cJ \cap \cI(E)$ by the Hahn--Banach theorem.
\end{proof}

The following result about ideals of $A(\cH)$ can be established in exactly the same manner as in \cite{CD18}.
We provide a different argument.

\begin{thm}\label{T:I(E)dense}
Let $\cH$ be a regular unitarily invariant space on $\bB_d$. 
Suppose that $E$ is a closed subset of $\partial\bB_d$.
Then $\cI(E)$ is weak-$*$ dense in $\Mult(\cH)$ if and only if $E$ is $\Mult(\cH)$-totally null. 
In this case, the unit ball of $\cI(E)$ is weak-$*$ dense in the unit ball of $\Mult(\cH)$.
\end{thm}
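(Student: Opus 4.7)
The plan is to prove each direction separately. The ``only if'' direction will follow easily from the characterization in Lemma \ref{lem:TN_elementary}. The ``if'' direction, together with the upgrade to unit-ball density, will rest on the Pick and peak interpolation result Theorem \ref{thm:pick_peak_general}.

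For the easy direction, assuming $\cI(E)$ is weak-$*$ dense in $\Mult(\cH)$, I would invoke Lemma \ref{lem:TN_elementary}(iii) and check that every positive $\Mult(\cH)$-Henkin measure $\mu$ concentrated on $E$ must vanish. Such a $\mu$ defines a weak-$*$ continuous functional $\tilde\mu$ on $\Mult(\cH)$ whose restriction to $A(\cH)$ is the integration functional $\rho_\mu$ (both are norm continuous on $A(\cH)$ and agree on polynomials). Since $\rho_\mu$ vanishes on $\cI(E)$, weak-$*$ density of $\cI(E)$ forces $\tilde\mu \equiv 0$; evaluating at the constant $1$ gives $\mu(\partial\bB_d) = 0$, and positivity then yields $\mu = 0$.

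For the converse, I would prove the stronger unit-ball density directly. Given $f \in \Mult(\cH)$ with $\|f\|_{\Mult(\cH)} \le 1$, I would fix an increasing sequence $(F_n)$ of finite subsets of $\bB_d$ whose union is dense in $\bB_d$. Applying Theorem \ref{thm:pick_peak_general} to $F_n$ and $E$, the complete quotient property together with the observation that $(f|_{F_n}, 0)$ has norm at most $\|f\|_{\Mult(\cH)} \le 1$ in $\Mult(\cH)|_{F_n} \oplus_\infty C(E)$ yields $h_n \in A(\cH)$ with $h_n|_{F_n} = f|_{F_n}$, $h_n|_E = 0$, and $\|h_n\|_{\Mult(\cH)} \le 1 + \tfrac{1}{n}$. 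Rescaling by $(1+\tfrac{1}{n})^{-1}$ produces $\tilde h_n \in \cI(E)$ in the closed unit ball of $\Mult(\cH)$.

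The last step is to check $\tilde h_n \to f$ weak-$*$. Using the identification on bounded sets of the weak-$*$ topology with pointwise convergence on $\bB_d$ recalled in Subsection \ref{ss:RKHS}, it suffices to verify $\tilde h_n(z) \to f(z)$ for every $z \in \bB_d$. This is immediate on the dense set $\bigcup_n F_n$. To extend to all of $\bB_d$, I would use a normal families argument: the $\tilde h_n$ are uniformly bounded in supremum norm and holomorphic, so by Montel's theorem every subsequential compact-open limit must agree with $f$ on the dense set $\bigcup_n F_n$ and therefore equal $f$. The main obstacle, and indeed the real substance of the argument, is producing the approximating sequence $(h_n)$ that matches $f$ on an increasingly dense subset while vanishing on $E$ with asymptotically sharp norm; this is exactly what the complete quotient property in Theorem \ref{thm:pick_peak_general} provides.
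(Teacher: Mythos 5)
Your proof is correct and follows essentially the same route as the paper: the forward direction via Lemma \ref{lem:TN_elementary}, and the converse via the quotient property of Theorem \ref{thm:pick_peak_general} combined with the fact that on bounded subsets of $\Mult(\cH)$ the weak-$*$ topology is the topology of pointwise convergence on $\bB_d$. The only difference is cosmetic: the paper interpolates $f$ exactly on each finite set (taking $f$ in the open unit ball) and concludes density directly from the pointwise-convergence neighborhood basis, whereas you build a sequence with a $(1+\tfrac{1}{n})$ relaxation and upgrade convergence on the dense set $\bigcup_n F_n$ to all of $\bB_d$ by a Montel argument; both are valid.
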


\begin{proof}
 Assume first that $\cI(E)$ is weak-$*$ dense in $\Mult(\cH)$ and let $\mu$ be a positive
 $\Mult(\cH)$-Henkin measure supported on $E$. The integration functional $\rho_\mu$
 on $A(\cH)$ annihilates $\cI(E)$ and extends to a weak-$*$ continuous functional on $\Mult(\cH)$. 
 Hence by weak-$*$ density of $\cI(E)$, $\rho_\mu$ is the zero functional. In particular,
 $\mu(E) = \rho_\mu(1) = 0$, so that $E$ is $\Mult(\cH)$-totally null by Lemma \ref{lem:TN_elementary}.

 Conversely, suppose that $E$ is $\Mult(\cH)$-totally null, and let $f$ belong to the open
 unit ball of $\Mult(\cH)$. Theorem \ref{thm:pick_peak_general} implies that for every finite
 set $F \subset \bB_d$, there exists $g$ in the unit ball of $A(\cH)$ with $g|_E = 0$ and
 $g |_F = f |_F$; whence $g\in \cI(E)$. Since the weak-$*$ topology on the unit ball of $\Mult(\cH)$ agrees
 with the topology of pointwise convergence on $\bB_d$, we see that the unit ball of $\cI(E)$
 is weak-$*$ dense in the unit ball of $\Mult(\cH)$.
\end{proof}

Combining Theorem~\ref{T:I(E)dense} with Corollary~\ref{C:Jperp} immediately yields the following.

\begin{cor}\label{C:I(E)perp}
Let $\cH$ be a regular unitarily invariant space on $\bB_d$. 
Suppose that $E$ is a closed $\Mult(\cH)$-totally null subset of $\partial\bB_d$.
Then $\cI(E)^\perp = \TS(E)$. \qed
\end{cor}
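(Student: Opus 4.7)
The plan is to deduce the identity $\cI(E)^\perp = \TS(E)$ from Corollary~\ref{C:Jperp} and Theorem~\ref{T:I(E)dense}, as the sentence preceding the statement suggests. First, I would apply Corollary~\ref{C:Jperp} to the closed ideal $\cJ = \cI(E)$ to obtain
\[
  \cI(E)^\perp = \widetilde{\cI(E)}_\perp \oplus_1 \TS(E_0),
\]
where $E_0 = Z(\cI(E)) \cap \partial \bB_d$. Since $E$ is $\Mult(\cH)$-totally null, Theorem~\ref{T:I(E)dense} says that $\widetilde{\cI(E)} = \Mult(\cH)$, so the first summand vanishes and $\cI(E)^\perp = \TS(E_0)$. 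This reduces matters to showing $E_0 = E$.

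The inclusion $E \subseteq E_0$ is immediate from the definition of $\cI(E)$. For the reverse, I would fix $\zeta \in \partial \bB_d \setminus E$ and produce $f \in \cI(E)$ with $f(\zeta) \neq 0$, splitting into two cases according to whether $\{\zeta\}$ is $\Mult(\cH)$-totally null. In the totally null case, $E \cup \{\zeta\}$ is a compact totally null set, so Theorem~\ref{thm:pick_peak_general} with $F = \emptyset$ shows that the restriction $A(\cH) \to C(E \cup \{\zeta\})$ is surjective, yielding $f \in A(\cH)$ with $f|_E = 0$ and $f(\zeta) = 1$. In the non-totally-null case, the band property of $\HENH$ forces $\delta_\zeta$ to be $\Mult(\cH)$-Henkin, so point evaluation at $\zeta$ extends to a weak-$*$ continuous functional $\ell$ on $\Mult(\cH)$; were $\zeta$ in $E_0$, the functional $\ell$ would vanish on $\cI(E)$, hence, by the weak-$*$ density of $\cI(E)$ in $\Mult(\cH)$ from Theorem~\ref{T:I(E)dense}, on all of $\Mult(\cH)$, contradicting $\ell(1) = 1$.

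The main subtlety is the identification $E_0 = E$: it is not formally subsumed by the two cited results but requires the dichotomy between totally null and non-totally-null singleton boundary points, invoking peak interpolation in one case and weak-$*$ density in the other. Once $E_0 = E$ is established, the corollary follows immediately from the computation $\cI(E)^\perp = \TS(E_0) = \TS(E)$.
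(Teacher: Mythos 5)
Your proposal is correct and follows essentially the same route as the paper, which obtains the corollary precisely by combining Corollary~\ref{C:Jperp} (applied to $\cJ=\cI(E)$) with the weak-$*$ density statement of Theorem~\ref{T:I(E)dense}. The only difference is that you explicitly verify the identification $Z(\cI(E))\cap\partial\bB_d=E$ (via interpolation on $E\cup\{\zeta\}$ when singletons are totally null, and via weak-$*$ continuity of $\delta_\zeta$ otherwise), a point the paper treats as immediate; your justification of it is valid, so there is no gap.
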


\section{Peak interpolation} \label{S:peak interpolation}

Our results of Section~\ref{S:interpolation} show that a closed $\Mult(\cH)$-totally null set is an interpolation set for $A(\cH)$.
In this section, we will show that it is in fact a peak interpolation set.
This in turn implies that it is a zero set.
Our proof is modelled on a proof of the Rudin-Carleson Theorem from  \cite[III.E.2]{Wojtaszczyk91}.
Not only is this argument simpler than the proof due to Bishop \cite{Bishop62} (see \cite[section 10.3]{Rudin08}), 
it provides sharp control of the multiplier norm. 
This is stronger than the result for Drury-Arveson space in \cite{CD16}, and also considerably easier.

Recall that an $M$-ideal $\cJ$ in a Banach space $\cX$ is a subspace such that $\cX^*$ decomposes as $\cX^* = \cJ^\perp \oplus_1 \cZ$.
These subspaces of $\cX^*$ are called $L$-summands, and there is the identification $\cZ \cong \cJ^*$.
Generalizing a result for C*-algebras, Effros and Ruan \cite{ER90} show that the $M$-ideals of a (approximately) unital operator algebra $\cA$
are precisely the closed two-sided ideals with a contractive approximate unit.
From this, it is immediate that an $M$-ideal $\cJ$ is a complete $M$-ideal, meaning that $M_n(\cJ)$ is an $M$-ideal in $M_n(\cA)$ for all $n\ge1$
(see \cite[Theorem 4.8.5]{BL04}).
One important elementary property of an $M$-ideal $\cJ$ is that it is proximinal, meaning that if $a \in \cA$, there is an element $j \in \cJ$ so that 
$\|a-j\| = \dist(a,\cJ)$ (see \cite[III.D.4]{Wojtaszczyk91} or \cite[Proposition II.2.1]{HWW93}).
See \cite{HWW93} for more background on this topic.

\begin{thm}  \label{thm:Bishop}
Let $\cH$ be a regular unitarily invariant space on $\bB_d$, and let $E \subset \partial \bB_d$
be compact and $\Mult(\cH)$-totally null. Let $g \in M_n(C(E))$ be not identically zero. 
Then there exists $f \in M_n(A(\cH))$ such that
 \begin{enumerate}[label=\normalfont{(\arabic*)}]
   \item $f \big|_E = g$,
   \item $\|f(z)\| < \|g\|_\infty$ for every $z \in \ol{\bB_d} \setminus E$, and
   \item $\|f\|_{M_n(\Mult(\cH))} = \|g\|_\infty$.
 \end{enumerate}
\end{thm}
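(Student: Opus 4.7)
My plan is to use $M$-ideal theory to remove the $\ep$ loss in Corollary~\ref{cor:pick_peak}---yielding (1) and (3)---and then to upgrade to the strict inequality (2) via a scalar peak-function construction.

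The first step is to show that $\cI(E)$ is an $M$-ideal in $A(\cH)$. By Corollary~\ref{C:I(E)perp}, $\cI(E)^\perp = \TS(E)$. Since $E$ is closed in $\partial\bB_d$, every $\nu \in \TSH$ splits uniquely as $\nu\big|_E + \nu\big|_{\partial\bB_d \setminus E}$, giving $\TSH = \TS(E) \oplus_1 \TS_0$ with $\TS_0 := \{\nu \in \TSH : |\nu|(E) = 0\}$. Substituting into Theorem~\ref{thm:A_H_decomp} yields
\[
A(\cH)^* = \cI(E)^\perp \oplus_1 \bigl(\Mult(\cH)_* \oplus_1 \TS_0\bigr),
\]
exhibiting $\cI(E)^\perp$ as an $L$-summand. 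By the remarks just preceding the theorem, $\cI(E)$ is therefore automatically a complete $M$-ideal, so $M_n(\cI(E))$ is proximinal in $M_n(A(\cH))$. Combined with Theorem~\ref{thm:pick_peak_general} applied with $F = \emptyset$---which says the restriction map $M_n(A(\cH)) \to M_n(C(E))$ is a complete quotient, so the quotient norm of $g + M_n(\cI(E))$ equals $\|g\|_\infty$---this proximinality produces $f_0 \in M_n(A(\cH))$ with $f_0\big|_E = g$ and $\|f_0\|_{M_n(\Mult(\cH))} = \|g\|_\infty$, giving (1) and (3).

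For the strict inequality (2), I plan to construct a scalar peak function $p \in A(\cH)$ satisfying $p\big|_E = 1$, $\|p\|_{\Mult(\cH)} \le 1$, and $|p(z)| < 1$ for every $z \in \ol{\bB_d} \setminus E$, and then set $f := p\, f_0$. Then $f\big|_E = g$, $\|f\|_{M_n(\Mult(\cH))} \le \|p\|_{\Mult(\cH)}\|f_0\|_{M_n(\Mult(\cH))} = \|g\|_\infty$ (with equality forced by the restriction to $E$), and $\|f(z)\| = |p(z)|\,\|f_0(z)\| < \|g\|_\infty$ for $z \notin E$. To build $p$, I fix a compact exhaustion $K_1 \subset K_2 \subset \cdots$ of $\ol{\bB_d} \setminus E$ and, for each $k$, apply Corollary~\ref{cor:pick_peak} on a sufficiently fine finite net in $K_k$ to obtain $p_k \in A(\cH)$ with $p_k\big|_E$ close to $1$, $\|p_k\|_{\Mult(\cH)}$ close to $1$, and (by interior continuity estimates for multipliers) $|p_k|$ uniformly bounded below $1$ on $K_k$. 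A weighted combination $p := \sum_k c_k p_k$, followed by a final application of the $M$-ideal proximinality of the first step to correct $p\big|_E$ to be exactly $1$, then yields a function with $p\big|_E = 1$, $\|p\|_{\Mult(\cH)} \le 1$, and $|p| < 1$ throughout $\ol{\bB_d} \setminus E$.

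The main obstacle is this third step. The $M$-ideal machinery cleanly delivers sharp-norm interpolation on $E$ alone, but strict inequality off $E$ is not automatic. Carrying out the Wojtaszczyk-style peak-function construction in the multiplier-norm setting---where the multiplier norm strictly exceeds the sup norm, so the familiar rescaling tricks of the ball-algebra proof must be modified---is the crucial technical point, requiring careful reconciliation between the $(1+\ep)$-bounds of Corollary~\ref{cor:pick_peak} and the exact constraints $p\big|_E = 1$ and $\|p\|_{\Mult(\cH)} \le 1$. A secondary concern is the boundary part $K_k \cap \partial \bB_d$, where interior continuity estimates do not directly apply; this is handled by arranging the exhaustion so that each boundary point of $\partial\bB_d\setminus E$ lies in some $K_k$ together with a small neighborhood in $\ol{\bB_d}$ on which $|p_k|<1$.
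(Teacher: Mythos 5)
Your first step coincides with the paper's argument: the decomposition $A(\cH)^* = \bigl(\Mult(\cH)_* \oplus_1 \TS(\partial\bB_d\setminus E)\bigr) \oplus_1 \TS(E)$ exhibits $\cI(E)$ as a complete $M$-ideal, and proximinality upgrades the complete quotient map $A(\cH)\to C(E)$ of Theorem \ref{thm:pick_peak_general} to a map taking the closed unit ball of $M_n(A(\cH))$ onto that of $M_n(C(E))$; this yields (1) and (3) exactly as in the paper, as does your plan to obtain (2) by multiplying with a scalar function $p$ satisfying $p|_E=1$, $\|p\|_{\Mult(\cH)}\le 1$ and $|p|<1$ off $E$. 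The gap is in your construction of $p$. Interpolating the value $0$ on a fine finite net $F\subset K_k\cap\bB_d$ gives no control whatsoever at points of $K_k\cap\partial\bB_d$ or near them: functions in the closed $(1+\ep)$-ball of $\Mult(\cH)$ are equicontinuous on compact subsets of the \emph{open} ball (Cauchy estimates), but not up to the boundary, so a multiplier vanishing at net points within distance $\ep_k$ of a boundary point $\zeta\in\partial\bB_d\setminus E$ can still have modulus close to $1$ at $\zeta$. Your stated remedy --- ``arranging the exhaustion so that each boundary point lies in some $K_k$ together with a neighborhood on which $|p_k|<1$'' --- presupposes exactly the control that is missing; neither Theorem \ref{thm:pick_peak_general} nor Corollary \ref{cor:pick_peak} (which, incidentally, assumes the Pick property, not available in Theorem \ref{thm:Bishop}) prescribes anything at boundary points outside $E$. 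The proposed final correction is also not routine: adding a small-norm function to force $p|_E=1$ exactly destroys the sharp bound $\|p\|_{\Mult(\cH)}\le 1$ and can destroy the strict inequality $|p|<1$ near $E$, and there is no mechanism offered to restore both simultaneously.

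The paper's construction of $p$ avoids all of this by folding the offending boundary points into the interpolation set rather than approximating them from inside. Since $E\neq\emptyset$ is $\Mult(\cH)$-totally null, unitary invariance makes every singleton in $\partial\bB_d$ totally null, so for each $z\in\partial\bB_d\setminus E$ the compact set $E\cup\{z\}$ is totally null; the sharp-norm interpolation already proved (your first step, applied to $E\cup\{z\}$) produces $h_z$ in the closed unit ball of $A(\cH)$ with $h_z|_E=1$ and $h_z(z)=0$. Continuity of $h_z$ on $\ol{\bB_d}$ gives a relative neighborhood $U_z$ of $z$ in $\partial\bB_d$ with $|h_z|<\tfrac12$ on $U_z$; second countability of $\partial\bB_d\setminus E$ yields a countable subcover $(U_{z_k})$, and $h=\sum_k 2^{-k}h_{z_k}$ lies in the closed unit ball, equals $1$ on $E$, and satisfies $|h|<1$ on $\partial\bB_d\setminus E$. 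The interior points are then handled in one stroke by the maximum modulus principle, so no compact exhaustion, interior nets, $(1+\ep)$-estimates or correction step are needed. As it stands, your proof of (1) and (3) is complete, but the peak-function step --- which you yourself flag as the crucial point --- does not go through without this change of mechanism.
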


\begin{proof}
Consider the restriction map
\begin{equation*}
R: A(\cH) \to C(E), \quad f \mapsto f \big|_E.
\end{equation*}
By Theorem~\ref{thm:pick_peak_general}, $R$ is a complete quotient map with kernel $\cI(E)$. 
By Corollary~\ref{C:I(E)perp}, $\cI(E)^\perp = \TS(E)$.
Alternatively, since $R$ has closed range, $\cI(E)^\bot = \ran(R^*) = \TS(E)$.
Observe that by Theorem~\ref{thm:A_H_decomp}, we have
\begin{align*}
 A(\cH)^* &= \Mult(\cH)_* \oplus_1 \TSH \\
 &= \big( \Mult(\cH)_* \oplus_1 \TS(\partial\bB_d\setminus E) \big) \oplus_1 \TS(E) .
\end{align*}
The second decomposition merely splits a totally singular measure $\nu$ as $\nu = \nu_{E^c} + \nu_E$ where
$\nu_F(X) = \nu(X \cap F)$. It is evident that this is a decomposition into $L$-summands.
It follows that $\cI(E)$ is a complete $M$-ideal.

Since $M$-ideals are proximinal, it then follows that $R^{(n)}$ maps the closed unit ball of $M_n(A(\cH))$ onto the closed unit ball of $M_n(C(E))$.
In particular, given $g$ as in the statement of the theorem,
there exists $F \in M_n(A(\cH))$ with $F \big|_E = g$ and $\|F\|_{M_n(A(\cH))} \le \|g\|_\infty$.
Since the multiplier norm dominates the supremum norm also for matrices, it follows that $\|F(z)\| \le \|g\|_\infty$
for all $z \in \ol{\bB_d}$.
To obtain the strict pointwise inequality, it suffices to construct $h$ in the closed unit ball of $A(\cH)$
with $h \big|_E = 1$ and $|h(z)| < 1$ for $z \in \partial \bB_d \setminus E$. 
It then follows from the maximum modulus principle that $|h(z)| < 1$ for all $z \in \ol{\bB_d} \setminus E$.
Therefore $f = h F$ satisfies all requirements.

We now construct $h$.
Notice that since $E$ is a non-empty $\Mult(\cH)$-totally null set, the singleton $\{z\}$ is $\Mult(\cH)$-totally null for every $z \in E$. 
Unitary invariance of $\cH$ then implies that all singleton sets in $\partial\bB_d$ are $\Mult(\cH)$-totally null.
For each $z \in \partial \bB_d \setminus E$, the union $E \cup \{z\}$ is totally null.
By the previous paragraph, there is a function $h_z$ in the closed unit ball of $A(\cH)$ with $h_z \big|_E = 1$ and $h_z(z) = 0$. 
By continuity of $h_z$, there exists an open neighborhood $U_z$ of $z$ in $\partial \bB_d$ so that $|h_z(w)| < \frac{1}{2}$ for all $w \in U_z$.
Since $\partial \bB_d \setminus E$ is second countable, the open cover $(U_z)_{z \in \partial \bB_d \setminus E}$ has a countable subcover $(U_{z_k})_{k=1}^\infty$. 
Set
\begin{equation*}
h = \sum_{k=1}^\infty 2^{-k} h_{z_k}.
\end{equation*}
Then $h$ belongs to the closed unit ball of $A(\cH)$, $h \big|_E = 1$ and $|h(z)| < 1$ for $z \in \partial \bB_d \setminus E$, as desired.
\end{proof}

This readily yields the fact that every closed $\Mult(\cH)$-totally null set is a zero set.

\begin{cor} \label{C:zero sets}
  Let $E \subset \partial \bB_d$ be compact and $\Mult(\cH)$-totally null. 
  Then there exists $f \in A(\cH)$ with $E = \{z \in \ol{\bB_d} : f(z) = 0 \}$.
\end{cor}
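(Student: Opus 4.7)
The plan is to simply feed the constant function $g = 1 \in C(E)$ into Theorem~\ref{thm:Bishop} and then take an affine transformation of the resulting peak function. More precisely, since $E$ is a non-empty compact $\Mult(\cH)$-totally null set, I would apply Theorem~\ref{thm:Bishop} with $g \equiv 1$ to obtain $h \in A(\cH)$ satisfying $h\big|_E = 1$, $|h(z)| < 1$ for all $z \in \ol{\bB_d} \setminus E$, and $\|h\|_{\Mult(\cH)} = 1$.

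I then define $f = 1 - h \in A(\cH)$. Clearly $f \big|_E = 0$. For $z \in \ol{\bB_d} \setminus E$, the inequality $|h(z)| < 1$ forces $h(z) \ne 1$, hence $f(z) \ne 0$. Therefore $\{z \in \ol{\bB_d} : f(z) = 0 \} = E$, as desired.

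There is essentially no obstacle, since Theorem~\ref{thm:Bishop} has already done all the work; the only minor point is to note that the strict inequality $|h(z)| < 1$ automatically upgrades to $h(z) \ne 1$, which is the content we actually need. One can also handle the case $E = \emptyset$ trivially by taking $f \equiv 1$, although for the statement to be interesting we assume $E$ non-empty.
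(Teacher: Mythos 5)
Your proposal is correct and is essentially identical to the paper's own proof: apply Theorem~\ref{thm:Bishop} with the constant function $1$ on $E$ to get a peak function $h$, and set $f = 1 - h$. The remark about the empty set and the automatic upgrade from $|h(z)|<1$ to $h(z)\neq 1$ are fine but add nothing beyond what the paper implicitly does.
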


\begin{proof}
  By Theorem \ref{thm:Bishop}, there exists $g \in A(\cH)$ with $g \big|_E = 1$
  and $|g(z)| < 1$ for all $z \in \ol{\bB_d} \setminus E$. Set $f = 1 - g$.
\end{proof}

Using more sophisticated results from the theory of $M$-ideals, we can even obtain a linear operator
of peak interpolation, i.e.\ we can achieve that in Theorem \ref{thm:Bishop}, the function $f$ depends
linearly on $g$. In the case of the disc algebra and the ball algebra, this was first proved
by Pe\l czy\'{n}ski \cite{Pelczynski64}.

It is a theorem of And\^o \cite{Ando73a} and Choi--Effros \cite{CE77} that if $\cJ \subset \cX$ is an $M$-ideal so that
$\cX/\cJ$ is isometrically isomorphic to $C(K)$ for a compact metric space $K$, then
there exists a linear contractive lifting $L: \cX / \cJ \to \cX$, i.e.\ $L$ is a right-inverse
of the quotient mapping. More generally, a contractive lifting exists whenever $\cJ$ is an $M$-ideal so
that $\cX / \cJ$ is a separable Banach space that satisfies the metric approximation property.
This result can be found in \cite[Theorem II.2.1]{HWW93}; see also \cite[Section II.6]{HWW93}
for a discussion of the history and special cases of this result. From this, one
obtains a linear operator of peak interpolation in the scalar case. To deal with the matrix case,
we will use an operator space version of the lifting theorem due to Effros and Ruan \cite{ER94}.

\begin{thm}
  \label{thm:Bishop_linear}
Let $\cH$ be a regular unitarily invariant space on $\bB_d$ and let $E \subset \partial \bB_d$
be compact and $\Mult(\cH)$-totally null.
Then there exists a linear complete isometry $L: C(E) \to A(\cH)$ so that
 \begin{enumerate}[label=\normalfont{(\arabic*)}]
   \item $L(g) \big|_E = g$ for all $g \in C(E)$, and
   \item $\|L^{(n)}(g)(z)\| < \|g\|_\infty$ for all $g \in M_n(C(E)) \setminus \{0\}$ and every $z \in \ol{\bB_d} \setminus E$.
 \end{enumerate}
\end{thm}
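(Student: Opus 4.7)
The plan is to combine the operator space lifting theorem of Effros and Ruan with the scalar peak-function construction from the proof of Theorem \ref{thm:Bishop}.

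First, I would invoke the fact established inside the proof of Theorem \ref{thm:Bishop} that the restriction map $R\colon A(\cH) \to C(E)$ is a complete quotient map whose kernel $\cI(E)$ is a complete $M$-ideal in $A(\cH)$, so $R$ induces a completely isometric isomorphism $A(\cH)/\cI(E) \cong C(E)$. Since $E$ is a compact metric space, $C(E)$ is a separable nuclear C$^{*}$-algebra, in particular it has the operator space approximation property. The Effros--Ruan operator space version of the Ando--Choi--Effros lifting theorem then produces a completely contractive linear map $L_0\colon C(E) \to A(\cH)$ satisfying $R \circ L_0 = \id_{C(E)}$. Any such lifting is automatically a complete isometry: for every $g \in M_n(C(E))$,
\[
\|g\|_{M_n(C(E))} = \|R^{(n)} L_0^{(n)}(g)\|_{M_n(C(E))} \le \|L_0^{(n)}(g)\|_{M_n(A(\cH))} \le \|g\|_{M_n(C(E))}.
\]

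Next, I would produce a scalar peak function exactly as in the proof of Theorem \ref{thm:Bishop}: an element $h \in A(\cH)$ with $\|h\|_{\Mult(\cH)} \le 1$, $h\big|_E = 1$, and $|h(w)| < 1$ for every $w \in \partial\bB_d \setminus E$. Because $h$ is a non-constant holomorphic function on $\bB_d$, continuous on $\ol{\bB_d}$, with $|h| \le 1$, the maximum modulus principle yields $|h(z)| < 1$ for every $z \in \bB_d$ as well, so $|h(z)| < 1$ on all of $\ol{\bB_d}\setminus E$. I would then define
\[
L\colon C(E) \to A(\cH), \qquad L(g) = h \cdot L_0(g),
\]
and check the three required properties. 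Linearity and $L(g)\big|_E = g$ are immediate. The complete isometry property follows from
\[
\|L^{(n)}(g)\|_{M_n(\Mult(\cH))} \le \|h\|_{\Mult(\cH)} \, \|L_0^{(n)}(g)\|_{M_n(\Mult(\cH))} \le \|g\|_\infty,
\]
together with the reverse inequality $\|L^{(n)}(g)\|_{M_n(\Mult(\cH))} \ge \|L^{(n)}(g)\big|_E\|_\infty = \|g\|_\infty$ coming from the supremum norm dominance. Finally, for $g \neq 0$ and $z \in \ol{\bB_d}\setminus E$, using that $h(z)$ is a scalar,
\[
\|L^{(n)}(g)(z)\|_{M_n} = |h(z)| \cdot \|L_0^{(n)}(g)(z)\|_{M_n} \le |h(z)| \cdot \|g\|_\infty < \|g\|_\infty.
\]

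The main conceptual step is the invocation of the Effros--Ruan lifting theorem; once the existence of the completely isometric lifting $L_0$ is in hand, the rest of the argument is essentially the same trick used for peak interpolation in the disc and ball algebras by Pe\l czy\'nski, with the single scalar function $h$ simultaneously converting pointwise weak inequalities into strict ones and preserving the complete isometry. Verifying the hypotheses of the lifting theorem is routine in our setting, so no serious obstacle is expected.
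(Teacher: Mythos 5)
Your proposal is correct and follows essentially the same route as the paper: the Effros--Ruan lifting theorem applied to the complete quotient map $R: A(\cH) \to C(E)$, whose kernel is a complete $M$-ideal, followed by multiplication with the scalar peak function $h$ furnished by Theorem \ref{thm:Bishop}. The only hypothesis you dismiss as ``routine'' is the local reflexivity of $A(\cH)$ required by the Effros--Ruan theorem; the paper verifies this by noting that $C^*(A(\cH))$ is nuclear thanks to the short exact sequence \eqref{eqn:short_exact}, hence locally reflexive, and that local reflexivity passes to subspaces.
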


\begin{proof}
  We first construct a linear complete contraction $L_0: C(E) \to A(\cH)$ that satisfies (1). Recall
  from the first paragraph of the proof of Theorem \ref{thm:Bishop} that the restriction map
  \begin{equation*}
    R: A(\cH) \to C(E), \quad f \mapsto f \big|_E,
  \end{equation*}
  is a complete quotient mapping whose kernel is a complete $M$-ideal.
  In this setting, \cite[Theorem 5.2]{ER94} implies that there exists a linear complete contraction $L_0: C(E) \to A(\cH)$
  so that $R \circ L_0$ is the identity, i.e.\ (1) holds.
 
 To see that \cite[Theorem 5.2]{ER94} is applicable, one has to observe that $C(E)$ satisfies the operator
  metric approximation property, and that $A(\cH)$ is locally reflexive. The first assertion is
  a standard partition of unity argument; see, \cite[Proposition 2.4.2]{BO08}. The second
  assertion follows from the fact that $C^*(A(\cH))$, thanks to the short exact sequence
  \eqref{eqn:short_exact} in Subsection \ref{ss:RKHS}, is a nuclear $C^*$-algebra \cite[Exercise 3.8.1]{BO08}.
  Hence it locally reflexive (see \cite[Proposition 18.19]{Pisier03} or \cite[Corollary 9.4.1]{BO08}), and local reflexivity passes to subspaces \cite[p. 185]{ER94}.

  To obtain property (2), we apply Theorem \ref{thm:Bishop} to get a function $h$ in the closed unit ball of $A(\cH)$
  with $h \big|_E = 1$ and $|h(z)| < 1$ for $z \in \ol{\bB_d} \setminus E$. Define
  \begin{equation*}
    L: C(E) \to A(\cH), \quad g \mapsto h L_0(g).
  \end{equation*}
  Then $L$ is a linear complete contraction that satisfies (1) and (2), using once
  more that the multiplier norm dominates the supremum norm. From (1), we deduce that $L$ is in fact
  a complete isometry, which finishes the proof.
\end{proof}

\section{Existence of totally null sets}

In this section, we study when a regular unitarily invariant space admits non-empty $\Mult(\cH)$-totally null
sets. We begin with an easy lemma.

\begin{lem}
  \label{lem:TN_existence}
  Let $\cH$ be a regular unitarily invariant space. The following
  assertions are equivalent:
  \begin{enumerate}[label=\normalfont{(\roman*)}]
    \item There exists a non-empty $\Mult(\cH)$-totally null set.
    \item For each $z \in \partial \bB_d$, the singleton set $\{z\}$ is
      $\Mult(\cH)$-totally null.
    \item For each $z \in \partial \bB_d$, the Dirac measure $\delta_z$ is $\Mult(\cH)$-totally singular.
    \item $\HENH \subsetneq M(\partial \bB_d)$.
  \end{enumerate}
\end{lem}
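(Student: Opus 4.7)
The plan is to establish the equivalences by the cycle (i) $\Rightarrow$ (ii) $\Leftrightarrow$ (iii) $\Rightarrow$ (iv) $\Rightarrow$ (i). The only genuinely nontrivial input is Proposition \ref{prop:measure_concentrated}; everything else is essentially bookkeeping.

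For (ii) $\Leftrightarrow$ (iii), I would argue directly from the definitions. The set $\{z\}$ is $\Mult(\cH)$-totally null precisely when $\mu(\{z\}) = 0$ for every positive $\Mult(\cH)$-Henkin measure $\mu$ (using that $|\mu|$ is Henkin whenever $\mu$ is, by Lemma \ref{lem:hen_band}), which in turn is equivalent to $\delta_z \perp \mu$ for every positive Henkin $\mu$, i.e.\ to $\delta_z \in \TSH$. The implication (ii) $\Rightarrow$ (i) is trivial, and for (i) $\Rightarrow$ (ii) I would use unitary invariance. Given a non-empty totally null set $E$ and any $z_0 \in E$, the singleton $\{z_0\}$ is totally null as a subset of $E$. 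By Lemma \ref{lem:multiplier_rotations}, the composition $f \mapsto f_{1,U}$ is a weak-$*$--weak-$*$ homeomorphism of $\Mult(\cH)$ for every $U \in \cU_d$, so that if $\mu$ is Henkin, the functional $f \mapsto \rho_\mu(f_{1,U})$ is again weak-$*$ continuous; this says exactly that the pushforward $U_*\mu$ is Henkin. Hence the class of totally null sets is $\cU_d$-invariant, and since $\cU_d$ acts transitively on $\partial \bB_d$, every singleton is totally null.

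The implication (iii) $\Rightarrow$ (iv) is immediate: a non-zero $\Mult(\cH)$-totally singular measure cannot be $\Mult(\cH)$-Henkin, for otherwise it would be singular with respect to itself. The remaining direction (iv) $\Rightarrow$ (i) is the one that relies on the structure theory developed earlier. Given a measure $\mu \in M(\partial \bB_d) \setminus \HENH$, the F.~Riesz decomposition of Lemma \ref{lem:hen_band} produces a non-zero totally singular component $\mu_s$, and Proposition \ref{prop:measure_concentrated} then exhibits an $F_\sigma$ totally null set on which $\mu_s$ is concentrated. Since $\mu_s \neq 0$, this set is non-empty, yielding (i).

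The only step that requires real content is (iv) $\Rightarrow$ (i), and that content has already been absorbed into Proposition \ref{prop:measure_concentrated}; the rest of the argument should be short and essentially formal.
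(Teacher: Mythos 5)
Your proposal is correct and follows essentially the same route as the paper: unitary invariance (via Lemma \ref{lem:multiplier_rotations}) for (i) $\Rightarrow$ (ii), a direct definitional argument for (ii) $\Leftrightarrow$ (iii) where the paper invokes Lemma \ref{lem:TN_elementary}, triviality of (iii) $\Rightarrow$ (iv), and the band decomposition of Lemma \ref{lem:hen_band} together with Proposition \ref{prop:measure_concentrated} for (iv) $\Rightarrow$ (i). Your explicit verification that pushforwards of Henkin measures under unitaries are Henkin is exactly the content the paper leaves implicit under ``unitary invariance.''
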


\begin{proof}
  (i) $\Rightarrow$ (ii) Let $E$ be a non-empty $\Mult(\cH)$-totally null set. Then
  for each $z \in E$, the singleton $\{z\}$ is $\Mult(\cH)$-totally null. Unitary invariance
  of $\cH$ then implies that all singleton subsets of $\partial \bB_d$ are $\Mult(\cH)$-totally null.

  (ii) $\Rightarrow$ (iii) Since the Dirac measure $\delta_z$ is concentrated on $\{z\}$,
  it is $\Mult(\cH)$-totally singular by Lemma \ref{lem:TN_elementary}.

  (iii) $\Rightarrow$ (iv) is trivial.

  (iv) $\Rightarrow$ (i) By Lemma \ref{lem:hen_band}, there exists a non-zero positive $\Mult(\cH)$-totally singular
      measure $\nu$. Proposition \ref{prop:measure_concentrated} implies that $\nu$ is concentrated
      on a $\Mult(\cH)$-totally null set $E$. Since $\nu \neq 0$, $E \neq \emptyset$, so (i) holds.
\end{proof}

In Remark \ref{rem:everything_henkin}, we observed that if the kernel $K(z,w) = \sum_{n=0}^\infty a_n \langle z,w \rangle^n$  of a regular unitarily invariant space satisfies $\sum_{n=0}^\infty a_n < \infty$, i.e.\ if $K$ is bounded,
then $\HENH = M(\partial \bB_d)$ and hence there are no non-empty
$\Mult(\cH)$-totally null sets. In the presence of the $2$-point Pick property, the converse holds as well.

\begin{prop}
  \label{prop:unbounded_Pick}
  Let $\cH$ be a regular unitarily invariant space on $\bB_d$ satisfying the $2$-point Pick property.
  If the reproducing kernel of $\cH$ is unbounded, then
  each singleton subset of $\partial \bB_d$ is $\Mult(\cH)$-totally null.
\end{prop}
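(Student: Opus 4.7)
The plan is to show, for every $z \in \partial \bB_d$, that the Dirac mass $\delta_z$ is not $\Mult(\cH)$-Henkin; since $\delta_z$ is concentrated on $\{z\}$, this forces $\{z\}$ to be $\Mult(\cH)$-totally null. By Lemma~\ref{lem:Henkin_sequence}, it suffices to exhibit a sequence of polynomials $(p_n)$ in the closed unit ball of $\Mult(\cH)$ with $p_n \to 0$ pointwise on $\bB_d$ while $p_n(z) \not\to 0$.

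Fix $z \in \partial \bB_d$. Since $K$ is unbounded, $K(r z, r z) = \sum_k a_k r^{2k} \to \sum_k a_k = \infty$ as $r \to 1^-$, so we may choose $r_n \to 1^-$ with $K(w_n, w_n) \ge n^2$, where $w_n := r_n z$. Set $\lambda_n = 1 - 1/n^2$. Then for $n$ sufficiently large, $\lambda_n^2 \le 1 - 1/K(w_n, w_n)$, so the Pick matrix
\[
\begin{pmatrix} 1 & 1 \\ 1 & K(w_n, w_n)(1 - \lambda_n^2) \end{pmatrix}
\]
is positive semidefinite. The $2$-point Pick property then produces $\varphi_n \in \Mult(\cH)$ with $\|\varphi_n\|_{\Mult(\cH)} \le 1$, $\varphi_n(0) = 0$, and $\varphi_n(w_n) = \lambda_n$.

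The key construction is $f_n(u) := \varphi_n(r_n u)^n$, equivalently $(\varphi_n^n)_{r_n, I_d}(u)$. By Lemma~\ref{lem:multiplier_rotations} we have $\|f_n\|_{\Mult(\cH)} \le 1$, and since dilation by $r_n < 1$ produces a function holomorphic on a neighborhood of $\ol{\bB_d}$, the preliminaries give $f_n \in A(\cH)$. Positivity of the Pick matrix at $(0,u)$ (automatic for any contractive multiplier vanishing at $0$) yields $|\varphi_n(u)|^2 \le 1 - 1/K(u,u)$ for every $u \in \bB_d$; combined with the monotonicity $K(r_n u, r_n u) \le K(u,u)$, this gives
\[
|f_n(u)| \;\le\; \bigl(1 - 1/K(u,u)\bigr)^{n/2} \;\longrightarrow\; 0 \quad \text{for } u \in \bB_d \setminus \{0\},
\]
while $f_n(0) = 0$. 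On the other hand, $f_n(z) = \varphi_n(w_n)^n = (1 - 1/n^2)^n \to 1$.

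To finish, approximate each $f_n$ in multiplier norm by a polynomial $p_n$ with $\|p_n - f_n\|_{\Mult(\cH)} < 1/n$, and rescale by $1/(1 + 1/n)$ to get $\|p_n\|_{\Mult(\cH)} \le 1$. Since the multiplier norm dominates the sup norm on $\ol{\bB_d}$, the pointwise behavior is preserved, yielding $p_n \to 0$ pointwise on $\bB_d$ and $p_n(z) \to 1$, which by Lemma~\ref{lem:Henkin_sequence} contradicts $\delta_z$ being Henkin. The main delicacy is the balance in choosing $\lambda_n$: it must be close enough to $1$ that $\lambda_n^n$ does not vanish in the limit, while still respecting the $2$-Pick bound $\lambda_n \le \sqrt{1 - 1/K(w_n,w_n)}$, which is what forces $r_n \to 1$ rapidly enough that $K(w_n,w_n)$ grows at least quadratically in $n$.
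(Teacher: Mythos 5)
Your proof is correct, and while it rests on the same basic engine as the paper's --- two-point Pick data at $0$ and at points $r_nz$ approaching the boundary, made available by $K(r z,r z)\to\infty$, followed by taking powers to kill the interior values --- its implementation is genuinely different. The paper invokes the explicit extremal solution of the two-point problem from \cite{Hartz17a}, passes to the weak-$*$ limit $f = 1 - 1/K(\cdot,e_1)$ as $r\to1$, and derives a contradiction from the hypothetical weak-$*$ continuous (multiplicative) extension of evaluation at $e_1$, using the maximum modulus principle to see $f^n\to0$ weak-$*$. You use only the qualitative $2$-point Pick property to solve $\varphi_n(0)=0$, $\varphi_n(r_nz)=1-1/n^2$ (your verification that the $2\times2$ Pick matrix is positive is correct, since $K(w_n,w_n)\ge n^2$), and then the elementary necessity direction of Pick interpolation, $|\varphi_n(u)|^2\le 1-1/K(u,u)$, combined with $K(r_nu,r_nu)\le K(u,u)$ and the dilation estimate of Lemma \ref{lem:multiplier_rotations}, to show that $f_n=\varphi_n(r_n\,\cdot)^n\in A(\cH)$ has multiplier norm at most $1$, tends to $0$ pointwise on $\bB_d$, and satisfies $f_n(z)=(1-1/n^2)^n\to1$; after the routine polynomial approximation, Lemma \ref{lem:Henkin_sequence} shows $\delta_z$ is not Henkin, and the band property (as in Lemma \ref{lem:TN_elementary}) then gives that $\{z\}$ is totally null. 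What your route buys: no appeal to the extremal formula of \cite{Hartz17a}, no limit function, and no need for the (small but not entirely trivial) fact that the weak-$*$ continuous extension of a character is multiplicative --- everything is an explicit quantitative estimate. What the paper's route buys: a single clean witness, the contractive multiplier $1-1/K(\cdot,e_1)$, and no polynomial approximation step. All ingredients you use (membership in $A(\cH)$ of functions holomorphic on a neighborhood of $\ol{\bB_d}$, Lemmas \ref{lem:multiplier_rotations} and \ref{lem:Henkin_sequence}) are established in the paper, so your argument is complete as written.
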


\begin{proof}
  By Lemma \ref{lem:TN_existence}, it suffices to show that the Dirac measure $\delta_{e_1}$ is
  not $\Mult(\cH)$-Henkin. The $2$-point Pick property of $\cH$ allows us to solve, for each $0 < r < 1$, the extremal
  problem
  \begin{equation*}
    \sup \{ \Re( f(r e_1)) : f \in \Mult(\cH), \|f\|_{\Mult(\cH)} \le 1, f(0) = 0 \}.
  \end{equation*}
  The unique multiplier that achieves the supremum is given by
  \begin{equation*}
    f_r(z) = \frac{1 - \frac{1}{K(z,r e_1)}}{\sqrt{1 - \frac{1}{K(r e_1, r e_1)}}},
  \end{equation*}
  see \cite[Proposition 3.1]{Hartz17a}. Notice that $f_r$ is holomorphic in a neighborhood
  of $\ol{\bB_d}$, hence $f_r \in A(\cH)$ for all $0 <r < 1$; see the discussion in Subsection \ref{ss:RKHS}.
  Since $\sum_{n=0}^\infty a_n = \infty$, we see that $\lim_{r \to 1} K(r e_1, r e_1) = \infty$,
  so $f_r$ converges pointwise on $\bB_d$ to
  \begin{equation*}
    f(z) = 1 - \frac{1}{K(z,e_1)}.
  \end{equation*}
  Note that  $K(z,e_1)$ is defined since $| \langle z, e_1 \rangle| < 1$ even though $e_1$ is not in the open ball.
  In fact, since $\|f_r\|_{\Mult(\cH)} \le 1$ for all $0 < r <1$, it follows that
  $f_r$ converges to $f$ in the weak-$*$ topology of $\Mult(\cH)$,
  and therefore $\|f\|_{\Mult(\cH)} \le 1$.

  Suppose now for a contradiction that the Dirac measure $\delta_{e_1}$ is $\Mult(\cH)$-Henkin,
  and let $\varphi \in \Mult(\cH)_*$ be the unique weak-$*$ continuous extension of the
  functional of evaluation at $e_1$. Then $\varphi$ is multiplicative.
  Moreover,
  \begin{equation*}
    \varphi(f) = \lim_{r \to 1} \varphi(f_r) = \lim_{r \to 1} f_r(1) = 1,
  \end{equation*}
  hence $\varphi(f^n) = 1$ for all $n \in \bN$.
  On the other hand, $|f(z)| < 1$  for all $z \in \bB_d$ by the maximum modulus principle.
 Hence $(f^n)$ tends to zero in the weak-$*$ topology of $\Mult(\cH)$, so
  $\varphi(f^n)$ tends to zero, a contradiction.
\end{proof}

The following basic observation is sometimes useful to show
that singleton sets are totally null.

\begin{lem}
  \label{lem:Henkin_inclusion}
  Let $\cH,\cK$ be two regular unitarily invariant spaces on $\bB_d$ with
  $\Mult(\cH) \subset \Mult(\cK)$. Then
  \begin{enumerate}[label=\normalfont{(\alph*)}]
    \item $\HEN(\Mult(\cK)) \subset \HEN(\Mult(\cH))$, and
    \item each $\Mult(\cH)$-totally null set is $\Mult(\cK)$-totally null.
  \end{enumerate}
\end{lem}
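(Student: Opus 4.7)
The plan is to reduce both (a) and (b) to the sequential characterization of Henkin functionals from Lemma~\ref{lem:Henkin_sequence}, and only a minor functional-analytic preliminary is needed.

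First, I would observe that the set-theoretic inclusion $\Mult(\cH) \subset \Mult(\cK)$ between two Banach algebras automatically forces a bounded inclusion. Indeed, both $\Mult(\cH)$ and $\Mult(\cK)$ embed continuously into the Fr\'echet space of holomorphic functions on $\bB_d$ with the topology of uniform convergence on compact subsets (since the multiplier norm dominates the supremum norm on any ball $r \bB_d$ with $r < 1$, via evaluation against normalized reproducing kernels). Therefore the inclusion $\Mult(\cH) \hookrightarrow \Mult(\cK)$ has closed graph, and the closed graph theorem provides a constant $C > 0$ with
\begin{equation*}
  \|f\|_{\Mult(\cK)} \le C \, \|f\|_{\Mult(\cH)} \qforal f \in \Mult(\cH).
\end{equation*}

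For part (a), let $\mu$ be a $\Mult(\cK)$-Henkin measure. Since $A(\cH) \subset A(\bB_d) \subset C(\partial \bB_d)$ contractively, $\rho_\mu$ defines a bounded functional on $A(\cH)$. To check that $\rho_\mu$ extends weak-$*$ continuously to $\Mult(\cH)$, I would verify the equivalent sequential condition in Lemma~\ref{lem:Henkin_sequence}: given polynomials $(p_n)$ with $\|p_n\|_{\Mult(\cH)} \le 1$ and $p_n(z) \to 0$ for every $z \in \bB_d$, the bounded inclusion gives $\|p_n\|_{\Mult(\cK)} \le C$, so the rescaled sequence $(p_n / C)$ is in the closed unit ball of $\Mult(\cK)$ and still converges pointwise to zero. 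Applying Lemma~\ref{lem:Henkin_sequence} to $\cK$ then yields $\int p_n \, d\mu \to 0$, proving that $\mu$ is $\Mult(\cH)$-Henkin.

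Part (b) follows instantly from part (a) and the definition of totally null: if $E$ is $\Mult(\cH)$-totally null and $\mu$ is any $\Mult(\cK)$-Henkin measure, then $\mu$ is $\Mult(\cH)$-Henkin by (a), whence $|\mu|(E) = 0$; thus $E$ is $\Mult(\cK)$-totally null.

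There is essentially no obstacle — the only mildly non-obvious ingredient is the automatic continuity of the inclusion $\Mult(\cH) \hookrightarrow \Mult(\cK)$, but this is immediate from the closed graph theorem once one observes that convergence in either multiplier norm implies pointwise convergence on $\bB_d$ (so the two possible limits must agree).
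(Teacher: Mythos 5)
Your proposal is correct and follows essentially the same route as the paper: the closed graph theorem (justified via the common continuous embedding into holomorphic functions) gives boundedness of the inclusion $\Mult(\cH) \subset \Mult(\cK)$, and then the sequential characterization of Henkin functionals in Lemma~\ref{lem:Henkin_sequence}, applied after rescaling by the norm of the inclusion, yields (a), with (b) immediate from (a). You have merely written out the rescaling and the closed-graph details that the paper leaves implicit.
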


\begin{proof}
  (a) The closed graph theorem implies that the inclusion $\Mult(\cH) \subset \Mult(\cK)$ is bounded.
  Hence Lemma \ref{lem:Henkin_sequence} implies that every $\Mult(\cK)$-Henkin measure is also $\Mult(\cH)$-Henkin.

  (b) This is immediate from (a).
\end{proof}

\begin{exa}
  Consider the spaces $\cH_s(\bB_d)$, defined in Subsection \ref{ss:RKHS}.
  We claim that singleton sets are $\Mult(\cH_s)$-totally null if and only if $s \ge -1$.

  To see this, note that for $s < -1$,
  the coefficients $(a_n)$ in the kernel are summable, hence there are no non-empty
  totally null sets. If $s \in [-1,0]$, then $\cH_s(\bB_d)$
  is a complete Pick space by a straightforward extension of \cite[Corollary 7.41]{AM02}. Hence Proposition \ref{prop:unbounded_Pick} shows that
  singleton sets are totally null if $s \in [-1,0]$.
  Finally, it is well known that $\Mult(\cH_s(\bB_d)) \subset \Mult(\cH_t(\bB_d))$
  for $s \le t$.
  Indeed, this is a special case of \cite[Corollary 3.4]{AHM+18a}. In the present setting, it is also
  an elementary consequence of the fact that for $s > -1$, the space $\cH_s(\bB_d)$ admits an equivalent norm for which the 
  reproducing kernel is given by
  \begin{equation*}
    K_s(z,w) = \frac{1}{(1 - \langle z,w \rangle)^{s+1}}.
  \end{equation*}
  Since $K_t / K_s$ is positive semi-definite if $s \le t$, it follows that $\Mult(\cH_s(\bB_d)) \subset \Mult(\cH_t(\bB_d))$
  for $s \le t$, see for instance \cite[Corollary 3.5]{Hartz17}.
  Lemma \ref{lem:Henkin_inclusion} therefore implies that singleton sets are $\Mult(\cH)$-totally null if $s \ge -1$.
\end{exa}

Next, we will construct an example to show that Proposition \ref{prop:unbounded_Pick} may fail without
the Pick property. This will be accomplished with the help of the following lemma.
Recall from the discussion in Subsection \ref{ss:RKHS} that if $\cH$ is a regular unitarily invariant
space on $\bD$, then the spectrum of $M_z$ on $\cH$ is $\ol{\bD}$. In particular,
the spectral radius formula implies that $\lim_{n \to \infty} \|z^n\|_{\Mult(\cH)}^{1/n} = 1$.
This is essentially the only restriction on the rate of growth of $\|z^n\|_{\Mult(\cH)}$, even
if we insist that the kernel be unbounded.

\begin{lem}
  \label{lem:kernel_big_mult_norm}
  Let $(\alpha_n)_{n=1}^\infty$ be a sequence in $[1,\infty)$ satisfying $\dlim_{n \to \infty} \alpha_n^{1/n} = 1$.
  Then there exists a regular unitarily invariant space $\cH$ on $\bD$ 
  with unbounded kernel and
  $\|z^n\|_{\Mult(\cH)} \ge \alpha_n$ for all $n \ge 1$.
\end{lem}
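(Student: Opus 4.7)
The plan is to realize $\cH$ as a weighted Hardy space with kernel $K(z,w) = \sum_{n\ge 0} a_n (z\bar w)^n$ for a sequence $(a_n)$ built from alternating long ``flat'' blocks and short ``decrease'' blocks. In such a space the monomials are pairwise orthogonal with $\|z^k\|^2 = 1/a_k$, and the operator $M_{z^n}$ acts in the corresponding orthonormal basis as a weighted shift of step $n$ with weights $\sqrt{a_k/a_{k+n}}$. Consequently
\[
  \|z^n\|_{\Mult(\cH)}^2 \;=\; \sup_{k\ge 0} \frac{a_k}{a_{k+n}}.
\]
Therefore, to guarantee $\|z^n\|_{\Mult(\cH)} \ge \alpha_n$ it suffices to exhibit, for each $n\ge 1$, a single index $k = k(n)$ with $a_k/a_{k+n} \ge \alpha_n^2$.

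To build $(a_n)$, fix integers $N_1 < N_2 < \cdots$ (to be chosen below), set $a_0 = 1$, and define the consecutive ratios by
\[
  \frac{a_{m+1}}{a_m} \;=\;
  \begin{cases}
    \alpha_j^{-2/j}, & N_j \le m < N_j + j \text{ for some } j,\\[2pt]
    1, & \text{otherwise.}
  \end{cases}
\]
Then for every $n\ge 1$ we have $a_{N_n}/a_{N_n+n} = (\alpha_n^{2/n})^n = \alpha_n^2$, which gives the desired lower bound on $\|z^n\|_{\Mult(\cH)}$. Since the per-step ratios take only the values $1$ or $\alpha_j^{-2/j}$, and $\alpha_j^{1/j}\to 1$ by hypothesis, we obtain $\lim_{m\to\infty} a_m/a_{m+1} = 1$, so $\cH$ is a regular unitarily invariant space; the radius of convergence of the power series $\sum a_n t^n$ is then automatically $1$ (a Ces\`aro argument on $\log(a_{m+1}/a_m)$ gives $\lim a_m^{1/m} = 1$).

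It remains to ensure that the kernel is unbounded, i.e.\ $\sum_n a_n = \infty$. Between the $J$-th and $(J+1)$-st decrease blocks, $a_m$ is constant with value $c_J := \prod_{j\le J} \alpha_j^{-2} > 0$, and this flat block contributes exactly $(N_{J+1} - N_J - J)\, c_J$ to $\sum_n a_n$. Choosing each $N_{J+1}$ so large that $(N_{J+1} - N_J - J)\, c_J \ge 1$ (for instance $N_{J+1} \ge N_J + J + \lceil 1/c_J\rceil$) then forces divergence of $\sum_n a_n$.

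The only real point to monitor is the compatibility of the three requirements: regularity of the kernel forces each individual decrease to be small, which is arranged by spreading the required factor $\alpha_j^2$ over $j$ consecutive steps (this is precisely where the hypothesis $\alpha_j^{1/j}\to 1$ enters); unboundedness of the kernel is then obtained essentially for free by taking the flat blocks sufficiently long, with no interaction with the growth of $(\alpha_j)$.
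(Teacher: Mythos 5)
Your construction is correct and is essentially the paper's proof: the paper builds the same space as a weighted shift whose weight sequence consists of $j$ copies of $\alpha_j^{1/j}$ separated by long blocks of ones of length $n_j \ge (\alpha_1\cdots\alpha_j)^2$, which is exactly your prescription of the ratios $a_{m+1}/a_m$ (decrease blocks of length $j$ with ratio $\alpha_j^{-2/j}$, flat blocks long enough to make $\sum a_n$ diverge), and it verifies regularity, the bound $\|z^n\|_{\Mult(\cH)} \ge \alpha_n$, and unboundedness of the kernel in the same way. The only difference is parametrization (kernel coefficients versus shift weights), so there is nothing substantive to add.
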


\begin{proof}
  Let $S$ be the unilateral weighted shift whose weight sequence $(w_n)$ is given by
  \begin{equation*}
    (\alpha_1,1,\ldots,1,\alpha_2^{1/2},\alpha_2^{1/2},1,\ldots,1,\alpha_3^{1/3},\alpha_3^{1/3},\alpha_3^{1/3},1,\ldots),
  \end{equation*}
  where the block of ones following $k$ copies of $\alpha_k^{1/k}$ consists of $n_k$ ones, 
  and we require that $n_k \ge (\alpha_1 \alpha_2 \ldots \alpha_k)^{2}$.
  Then $S$ is unitarily equivalent to $M_z$ on the reproducing kernel Hilbert space $\cH$ with reproducing kernel
  \begin{equation*}
    K(z,w) = \sum_{n=0}^\infty a_n (z \overline{w})^n,
  \end{equation*}
  where $a_0 = 1$, and $(w_n)$ and $(a_n)$ are related by
  \begin{equation}
    \label{eqn:weights_shift}
    \begin{split}
    w_n^2 &= \frac{a_n}{a_{n+1}} \quad (n \ge 0), \\
    a_n &= \prod_{k=0}^{n-1} w_k^{-2} \quad (n \ge 1),
    \end{split}
  \end{equation}
  see \cite[Section 3]{Shields74}. The assumption on $(\alpha_n)$ implies that
  \begin{equation*}
    \lim_{n \to \infty} \frac{a_n}{a_{n+1}} = \lim_{n \to \infty} w_n^2 = 1,
  \end{equation*}
  so $\cH$ is regular. Moreover,
  \begin{equation*}
    \|z^n\|_{\Mult(\cH)} = \|S^n\| = \sup_{k \ge 0} w_k w_{k+1} \ldots w_{k+n-1} \ge \alpha_n
  \end{equation*}
  for all $n \ge 1$. Finally, to see that $\sum_{n=0}^\infty a_n = \infty$, notice that
  the relation \eqref{eqn:weights_shift} implies that for each $k \ge 1$, the sequence $(a_n)$
  equals $(\alpha_1 \alpha_2 \ldots \alpha_k)^{-2}$ at least $n_k$ times, so $\sum_{n=0}^\infty a_n$ diverges by choice of $n_k$.
\end{proof}

This lemma allows us to construct spaces whose multipliers are very regular close to the boundary of $\bD$,
which in turn implies that there are no non-empty totally null sets.

 \begin{prop}
  \label{prop:unbounded_henkin}
  There exists a regular unitarily invariant space $\cH$ on $\bD$ with unbounded kernel
  that admits no non-empty $\Mult(\cH)$-totally null sets.
\end{prop}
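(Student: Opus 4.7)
The plan is to apply Lemma \ref{lem:kernel_big_mult_norm} with $\alpha_n = (n+1)^2$, which lies in $[1,\infty)$ and satisfies $\alpha_n^{1/n} \to 1$. This yields a regular unitarily invariant space $\cH$ on $\bD$ with unbounded kernel such that $\|z^n\|_{\Mult(\cH)} \ge (n+1)^2$ for all $n \ge 1$. By Lemma \ref{lem:TN_existence}, it suffices to show that the Dirac measure $\delta_\zeta$ is $\Mult(\cH)$-Henkin for every $\zeta \in \partial \bD$, which by Lemma \ref{lem:Henkin_sequence} amounts to verifying the following: whenever $(p_n)$ is a sequence of polynomials with $\|p_n\|_{\Mult(\cH)} \le 1$ and $p_n \to 0$ pointwise on $\bD$, we must have $p_n(\zeta) \to 0$.

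The heart of the argument is the coefficient bound
\[
  |c_k|\,\|z^k\|_{\Mult(\cH)} \le \|p\|_{\Mult(\cH)}
\]
for every polynomial $p = \sum_k c_k z^k$ and every $k \ge 0$. To prove it, recall that $\{\sqrt{a_j}\, z^j\}_{j \ge 0}$ is an orthonormal basis of $\cH$, so $\|z^j\|_\cH^2 = 1/a_j$ and $\|p z^j\|_\cH^2 = \sum_k |c_k|^2/a_{j+k}$. The inequality $\|p z^j\|_\cH \le \|p\|_{\Mult(\cH)}\,\|z^j\|_\cH$ then gives
\[
  \sum_k |c_k|^2 \,\frac{a_j}{a_{j+k}} \le \|p\|_{\Mult(\cH)}^2 \qquad (j \ge 0),
\]
and keeping a single term on the left while taking the supremum over $j$, combined with the weighted-shift identity $\|z^k\|_{\Mult(\cH)}^2 = \sup_j a_j/a_{j+k}$ used implicitly in the proof of Lemma \ref{lem:kernel_big_mult_norm}, yields the bound.

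Writing $p_n = \sum_k c_k^{(n)} z^k$ and applying this estimate produces $|c_k^{(n)}| \le 1/(k+1)^2$ for $k \ge 1$ and $|c_0^{(n)}| \le 1$, dominating $(c_k^{(n)})_k$ by a summable sequence independent of $n$. Since $\|p_n\|_\infty \le \|p_n\|_{\Mult(\cH)} \le 1$ and $p_n \to 0$ on $\bD$, Cauchy's integral formula (or equivalently Montel's theorem) shows that $c_k^{(n)} \to 0$ for each fixed $k$ as $n \to \infty$. For any $\zeta \in \ol{\bD}$, the dominated convergence theorem then yields
\[
  |p_n(\zeta)| \le \sum_k |c_k^{(n)}|\,|\zeta|^k \longrightarrow 0,
\]
which finishes the proof. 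The chief difficulty is the coefficient estimate; once it is in hand, everything else is a routine dominated-convergence argument combined with the lemmas of the preceding sections.
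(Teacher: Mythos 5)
Your proof is correct, and it follows the paper's overall strategy (apply Lemma \ref{lem:kernel_big_mult_norm} to force polynomial growth of $\|z^n\|_{\Mult(\cH)}$, deduce decay of Taylor coefficients, and conclude that every point mass on $\partial\bD$ is $\Mult(\cH)$-Henkin), but the implementation differs in two ways. First, you obtain the coefficient bound $|c_k|\,\|z^k\|_{\Mult(\cH)}\le\|p\|_{\Mult(\cH)}$ by a direct Hilbert-space computation with the monomial norms and the weighted-shift formula $\|z^k\|_{\Mult(\cH)}^2=\sup_j a_j/a_{j+k}$; the paper instead averages over rotations (Lemma \ref{lem:multiplier_rotations}) to get the same bound for arbitrary multipliers, not just polynomials. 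Second, you verify Henkin-ness of $\delta_\zeta$ directly through the sequential criterion of Lemma \ref{lem:Henkin_sequence} together with dominated convergence, which only requires $\|z^n\|_{\Mult(\cH)}\ge (n+1)^2$; the paper takes $\alpha_n=(n+1)^3$ so that $f'$ extends continuously to $\ol{\bD}$ with $\sup_{\ol\bD}|f'|\le C\|f\|_{\Mult(\cH)}$, shows the radial evaluation functionals $\tau_{rz}$ converge in norm to $\tau_z$, and then invokes the norm-closedness of the Henkin functionals from Lemma \ref{lem:predual_A(H)}. Your route is a bit more elementary and self-contained (and uses a weaker growth hypothesis), while the paper's route yields the additional structural information that $\Mult(\cH)\subset A(\bD)$ with a uniform derivative bound, which ties in with the dichotomy of Proposition \ref{prop:Henkin_dichotomy}.
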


\begin{proof}
  By Lemma \ref{lem:kernel_big_mult_norm}, there exists a regular unitarily invariant space $\cH$ on $\bD$
  with unbounded kernel so that $\|z^n\|_{\Mult(\cH)} \ge (n+1)^3$ for all $n \ge 0$.
  We claim that if $f \in \Mult(\cH)$, then $f \in A(\bD)$ and $f' \in A(\bD)$.
  To this end, notice that if $f \in \Mult(\cH)$ has a Taylor series $f(z) = \sum_{n=0}^\infty \widehat{f}(n) z^n$, then
  \begin{equation*}
    \widehat{f}(n) z^n = \frac{1}{2 \pi} \int_{0}^{2 \pi} f(e^{ it} z) e^{- i nt} \, dt .
  \end{equation*}
  This integral actually converges in the weak-$*$ topology by Lemma \ref{lem:multiplier_rotations}.
  Thus unitary invariance of $\cH$ implies that
  \begin{equation*}
    \|\widehat{f}(n) z^n \|_{\Mult(\cH)} \le \|f\|_{\Mult(\cH)} .
  \end{equation*}
  Therefore
  \begin{equation*}
    |\widehat{f}(n)| \le \frac{\|f\|_{\Mult(\cH)}}{(n+1)^3}
  \end{equation*}
  for all $n \in \bN$. It follows that the Taylor series of $f$ and of $f'$ converge absolutely in $A(\bD)$. Moreover,
  there exists a constant $C > 0$ so that
  \begin{equation*}
    \|f'\|_\infty \le C \|f\|_{\Mult(\cH)} 
  \end{equation*}
  for all $f \in \Mult(\cH)$.

  We finish the proof by showing that for each $z \in \partial \bD$, the Dirac measure $\delta_z$ is $\Mult(\cH)$-Henkin.
  For $w \in \ol{\bD}$, let $\tau_w \in A(\cH)^*$ denote the functional of evaluation at $w$.
  If $r \in (0,1)$ and $z \in \partial \bD$, then for each $f \in A(\cH)$,
  \begin{equation*}
    |f(r z) - f(z)| \le (1 -r) \|f'\|_\infty \le (1 -r) C \|f\|_{\Mult(\cH)} .
  \end{equation*}
  Hence $\tau_{r z}$ converges to $\tau_z$ in the norm of $A(\cH)^*$. Since $\tau_{r z}$ is $\Mult(\cH)$-Henkin
  and since $\Mult(\cH)$-Henkin functionals form a norm closed subspace of $A(\cH)^*$, we conclude that $\delta_{z}$ is a $\Mult(\cH)$-Henkin
  measure, as desired.
\end{proof}

The existence of non-empty $\Mult(\cH)$-totally null sets leads to a dichotomy in
the boundary behavior of multipliers.

\begin{prop}
  \label{prop:Henkin_dichotomy}
  Let $\cH$ be a regular unitarily invariant space on $\bB_d$.
  \begin{enumerate}[label=\normalfont{(\alph*)}]
    \item If singleton sets are not $\Mult(\cH)$-totally null,
      then $\Mult(\cH) \subset A(\bB_d)$, and evaluation at each point
      in $\ol{\bB_d}$ is weak-$*$ continuous on $\Mult(\cH)$.
    \item If singleton sets are $\Mult(\cH)$-totally null,
      then every sequence $(z_n)$ in $\bB_d$ with $\lim_{n \to \infty} \|z_n\| = 1$
      has a subsequence that is interpolating for $\Mult(\cH)$.
  \end{enumerate}
\end{prop}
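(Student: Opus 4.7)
The plan is to prove the two parts separately: part (a) by a rotation--dilation continuity argument, and part (b) by applying Rosenthal's $\ell^1$-theorem to the evaluation functionals $(\delta_{z_n})$ viewed inside the predual $\Mult(\cH)_*$.

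For (a), fix $f \in \Mult(\cH)$. By Lemma~\ref{lem:TN_existence}, every $\delta_z$ with $z \in \partial \bB_d$ is $\Mult(\cH)$-Henkin, so point evaluation at $z$ extends to a weak-$*$ continuous functional $\tilde\tau_z$ on $\Mult(\cH)$, and I set $\tilde f(z) = \tilde\tau_z(f)$ for $z \in \ol{\bB_d}$ (which agrees with $f$ on $\bB_d$). Fix any base point $z_0 \in \partial \bB_d$. The key identity $\tilde\tau_{z_0}(f_{r,U}) = \tilde\tau_{rUz_0}(f) = \tilde f(rUz_0)$ holds trivially for polynomials, and extends to all $f \in \Mult(\cH)$ because both sides are weak-$*$ continuous in $f$---the left by combining Lemma~\ref{lem:multiplier_rotations}(a) with weak-$*$ continuity of $\tilde\tau_{z_0}$, the right by weak-$*$ continuity of $\tilde\tau_{rUz_0}$---and polynomials are weak-$*$ dense in $\Mult(\cH)$ (as established in the proof of Lemma~\ref{lem:predual_A(H)}). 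Lemma~\ref{lem:multiplier_rotations}(b) then makes $(r,U) \mapsto \tilde f(rUz_0)$ continuous on $[0,1] \times \cU_d$. Since $\cU_d$ acts transitively on the sphere, the map $\pi: [0,1] \times \cU_d \to \ol{\bB_d}$, $(r,U) \mapsto rUz_0$, is a continuous surjection from a compact space to a Hausdorff space, hence a quotient map, which forces $\tilde f$ to be continuous on $\ol{\bB_d}$. Combined with holomorphy on $\bB_d$ this places $\tilde f$ in $A(\bB_d)$, and weak-$*$ continuity of evaluations is precisely the existence of the $\tilde\tau_z$.

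For (b), regard $(\delta_{z_n})$ as a bounded sequence in $X = \Mult(\cH)_*$. Rosenthal's $\ell^1$-theorem produces a subsequence $(\delta_{z_{n_k}})$ which is either weakly Cauchy or equivalent to the unit vector basis of $\ell^1$. The latter alternative is exactly the statement that $(z_{n_k})$ is interpolating for $\Mult(\cH)$: by standard duality, the evaluation map $\Mult(\cH) \to \ell^\infty$, $f \mapsto (f(z_{n_k}))$, is surjective iff its dual $\ell^1 \to \Mult(\cH)_*$, $e_k \mapsto \delta_{z_{n_k}}$, is bounded below, which is precisely $\ell^1$-equivalence. So the task reduces to ruling out weakly Cauchy subsequences. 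Using compactness of $\ol{\bB_d}$ and $\|z_n\| \to 1$, I may pass to a further subsequence with $z_{n_k} \to z_0 \in \partial \bB_d$. If $(\delta_{z_{n_k}})$ were weakly Cauchy, then $L(f) := \lim_k f(z_{n_k})$ would define a bounded linear functional $L \in \Mult(\cH)^*$ whose restriction to $A(\cH)$ equals $\delta_{z_0}|_{A(\cH)}$ by continuity of functions in $A(\cH) \subset C(\ol{\bB_d})$; since $\{z_0\}$ is $\Mult(\cH)$-totally null by hypothesis, Lemma~\ref{lem:TN_elementary} places $\delta_{z_0}$ in $\TSH$ and shows this restriction is not Henkin.

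The hard part will be converting this observation into the desired contradiction, namely forcing $L$ to be weak-$*$ continuous on $\Mult(\cH)$: for then $L \in \Mult(\cH)_*$ and $L|_{A(\cH)}$ would automatically be Henkin, clashing with the preceding. The cleanest route is to establish weak sequential completeness of $\Mult(\cH)_*$, so that the weakly Cauchy sequence converges weakly to an element of $\Mult(\cH)_*$ that must agree with $L$. A more hands-on alternative is to apply Corollary~\ref{C:singular_fa} to $\delta_{z_0} \in \TSH$, extracting a unit-ball sequence in $A(\cH)$ that tends to $0$ weak-$*$ yet to $1$ at $z_0$, and then to diagonalize these against $(z_{n_k})$ to build a single multiplier $h \in \Mult(\cH)$ whose values at $(z_{n_k})$ fail to be Cauchy, directly contradicting weak Cauchy-ness. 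Carrying out either strategy---translating the total singularity at the boundary limit into visible non-convergence along the sequence under the $\Mult(\cH)$-pairing---is the principal technical hurdle.
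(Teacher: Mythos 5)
Your part (a) is correct and is essentially the paper's argument: the identity $\tau_{rU\zeta}(f)=\tau_\zeta(f_{r,U})$, proved on a weak-$*$ dense subalgebra and extended by weak-$*$ continuity in $f$, together with Lemma \ref{lem:multiplier_rotations}(b); your quotient-map finish via the continuous surjection $[0,1]\times\cU_d\to\ol{\bB_d}$ is a harmless variant of the paper's factorization $z_n=r_nU_n\zeta$.

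Part (b), however, has a genuine gap, and it is exactly the one you flag yourself. Rosenthal's theorem only helps if you can exclude the weakly Cauchy alternative, i.e.\ show that no sequence $w_j\to z_0\in\partial\bB_d$ with $\{z_0\}$ totally null can have $(f(w_j))$ convergent for every $f\in\Mult(\cH)$; neither of your proposed routes accomplishes this. Weak sequential completeness of $\Mult(\cH)_*$ is nowhere available: it is a quotient of the trace class, and quotients of weakly sequentially complete spaces need not be weakly sequentially complete (every separable space, e.g.\ $c_0$, is a quotient of $\ell^1$), so this would be a substantial new theorem, not a citation. The ``diagonalization'' route is also not carried out: from the sequence $(g_m)$ of Corollary \ref{C:singular_fa} (ball of $A(\cH)$, $g_m\to 0$ weak-$*$, $g_m(z_0)\to 1$) you would need to manufacture a \emph{single} multiplier $h$ whose values oscillate along $(w_j)$, but there is no norm control on the partial sums or products of the $g_m$'s in $\Mult(\cH)$, and producing such an $h$ is essentially the whole difficulty of the proposition. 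Note also a small point in your reduction: to identify the evaluation map $\Phi:\Mult(\cH)\to\ell^\infty$ as the adjoint of $e_k\mapsto\delta_{z_{n_k}}$ acting $\ell^1\to\Mult(\cH)_*$, you should record that $\Phi$ is weak-$*$--weak-$*$ continuous (bounded nets plus pointwise convergence and Krein--Smulian), which is fine but should be said.

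The paper closes precisely this hole by a different mechanism, which you may want to adopt: by Theorem \ref{thm:Bishop} (available because singletons are totally null) there is $f\in A(\cH)$ with $f(\zeta)=1$, $|f|<1$ off $\zeta$, and $\|f\|_{\Mult(\cH)}=1$, where $\zeta$ is a cluster point of $(z_n)$ on the sphere. Then $w_n=f(z_n)$ is a sequence in $\bD$ tending to $1$, so a subsequence is interpolating for $H^\infty(\bD)$; since $(M_f^*)^n\to 0$ in the strong operator topology (test on kernel functions), the contraction $M_f$ admits an $H^\infty$-functional calculus, so $h\circ f\in\Mult(\cH)$ for every $h\in H^\infty(\bD)$, and composing transfers the disc interpolation back to $\Mult(\cH)$ along the corresponding subsequence of $(z_n)$. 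This is the Douglas--Eschmeier argument, and it is exactly the device that converts ``total nullness of the boundary limit'' into the oscillating multipliers your sketch still needs.
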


\begin{proof}
  (a) If singleton sets are not $\Mult(\cH)$-totally null,
  then Lemma \ref{lem:TN_existence} implies that for each $\zeta \in \partial \bB_d$,
  the Dirac measure $\delta_\zeta$ is $\Mult(\cH)$-Henkin.
  Thus, for each $\zeta \in \partial \bB_d$, there exists a unique weak-$*$ continuous functional
  $\tau_\zeta$ on $\Mult(\cH)$ that agrees with evaluation at $\zeta$ on $A(\cH)$.
  We also write $\tau_z$ for the weak-$*$ continuous functional
  of evaluation at $z \in \bB_d$ on $\Mult(\cH)$.
  Let $f \in \Mult(\cH)$ and define
  \begin{equation*}
    g: \ol{\bB_d} \to \bC, \quad
    z \mapsto \tau_z(f) .
  \end{equation*}
  Clearly $g$ agrees with $f$ on $\bB_d$.
  We claim that $g$ is continuous.
  It suffices to show that $g$ is continuous at each $\zeta \in \partial \bB_d$.

  Let $f_{r,U}(z) = f(r U z)$ for $0 \le r \le 1$, $U \in \cU_d$ and $z \in \bB_d$.
  We claim that
  \begin{equation}
    \label{eqn:rotation_character}
    \tau_{r U \zeta}(f) = \tau_\zeta(f_{r,U}).
  \end{equation}
  Indeed, part (a) of Lemma \ref{lem:multiplier_rotations} shows that the map
  \begin{equation*}
    \Mult(\cH) \to \Mult(\cH), \quad f \mapsto f_{r,U},
  \end{equation*}
  is weak-$*$--weak-$*$ continuous and maps $A(\cH)$ into $A(\cH)$.
  So \eqref{eqn:rotation_character} holds for all $f \in A(\cH)$ by definition of $\tau_\zeta$.
  Both sides of the equation are weak-$*$ continuous in $f$, so it holds for all $f \in \Mult(\cH)$.

  To show that $g$ is continuous at $\zeta \in \partial \bB_d$,
  let $(z_n)$ be a sequence in $\ol{\bB_d}$ that converges to $z$.
  An elementary linear algebra argument shows that there exist a sequence $(r_n)$ in $[0,1]$
  tending to $1$ and a sequence $(U_n)$ in $\cU_d$ tending to $I$ so that $z_n = r_n U_n \zeta$ for all $n$.
  Part (b) of Lemma \ref{lem:multiplier_rotations} implies that $f_{r_n,U_n}$
  tends to $f$ in the weak-$*$ topology of $\Mult(\cH)$.
  Hence using \eqref{eqn:rotation_character} and weak-$*$ continuity of $\tau_\zeta$, we find that
  \begin{equation*}
    g(z_n) = \tau_{z_n}(f)
    = \tau_{\zeta}(f_{r_n,U_n}) \xrightarrow{n \to \infty} \tau_\zeta(f) = g(\zeta).
  \end{equation*}
  Thus, $g$ is continuous at every point of $\partial \bB_d$, and hence is continuous on $\ol{\bB_d}$.

  It is clear from the definition of the extension $g$ of $f$ that evaluation at each point in $\ol{\bB_d}$
  is weak-$*$ continuous.

  (b)
  By passing to a subsequence, we may assume that $(z_n)$ tends to $\zeta \in \partial \bB_d$.
  Theorem \ref{thm:Bishop} shows that there exists a function
  $f \in A(\cH)$ that peaks at $\zeta$, i.e. $f(\zeta) = 1$, $|f(\zeta)| < 1$ for all
  $z \in \ol{\bB_d} \setminus \{\zeta\}$ and $\|f\|_{\Mult(\cH)} = 1$.
  In this setting, a result of Douglas and Eschmeier
  (see Lemma 12 and the discussion preceding Corollary 14 in \cite{DE12})
  shows $(z_n)$ admits a subsequence that is interpolating for $\Mult(\cH)$.
  Explicitly, let $w_n = f(z_n)$. Then $(w_n)$ is a sequence in $\mathbb{D}$ that tends to $1$,
  hence by passing to a subsequence, we may achieve that $(w_n)$ is interpolating for $H^\infty(\mathbb{D})$.
  By testing on kernel functions, one checks that $(M_f^*)^n$ tends to zero in the strong
  operator topology, so the contraction $M_f$ admits an $H^\infty$-functional calculus.
  This means that $h \circ f \in \Mult(\mathcal{H})$ for all $h \in H^\infty$,
  from which it follows that $(z_n)$ is interpolating for $\Mult(\mathcal{H})$.
\end{proof}

\begin{rem}
  Proposition \ref{prop:Henkin_dichotomy} shows that in the proof of Proposition \ref{prop:unbounded_henkin},
  it would have been sufficient to arrange that $\Mult(\cH) \subset A(\bD)$. But the proof Proposition \ref{prop:unbounded_henkin}
  is more elementary as it does not rely on the $H^\infty$-functional calculus
  or on Theorem \ref{thm:Bishop}.
\end{rem}

The following consequence about existence of interpolating sequences is immediate from Proposition \ref{prop:Henkin_dichotomy}.
 
\begin{cor}
  Let $\cH$ be a regular unitarily invariant space on $\bB_d$ and suppose that $\Mult(\cH) \not \subset A(\bB_d)$.
  Then there exist infinite interpolating sequences for $\Mult(\cH)$. \qed
\end{cor}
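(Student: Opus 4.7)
The plan is to deduce this immediately from the dichotomy established in Proposition \ref{prop:Henkin_dichotomy}. First, I would argue by contraposition on part (a): if singleton sets fail to be $\Mult(\cH)$-totally null, then part (a) asserts $\Mult(\cH) \subset A(\bB_d)$, contradicting the hypothesis. Hence under our assumption $\Mult(\cH) \not\subset A(\bB_d)$, singleton sets must be $\Mult(\cH)$-totally null.

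Next, I would invoke part (b) of the same proposition. Pick any sequence $(z_n)$ in $\bB_d$ with $\|z_n\| \to 1$, for instance the radial sequence $z_n = (1 - 1/n) e_1$. Part (b) then supplies a subsequence that is interpolating for $\Mult(\cH)$, establishing the corollary.

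There is no real obstacle; the result is genuinely a one-line consequence of the dichotomy, and the only thing to notice is that the trivial existence of a norm-1-approaching sequence in $\bB_d$ is what activates clause (b). The substantive content was already absorbed into Proposition \ref{prop:Henkin_dichotomy}, which in turn rests on the peak interpolation theorem (Theorem \ref{thm:Bishop}) and the Douglas--Eschmeier lemma via the $H^\infty$-functional calculus.
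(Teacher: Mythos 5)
Your proposal is correct and is exactly the paper's intended argument: the corollary is stated there as an immediate consequence of Proposition \ref{prop:Henkin_dichotomy}, namely the contrapositive of part (a) forces singletons to be $\Mult(\cH)$-totally null, and part (b) applied to any sequence in $\bB_d$ tending in norm to $1$ yields an interpolating subsequence.
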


\section{Interpolation sets} \label{S:interpolation sets}

Let $\cH$ be a regular unitarily invariant space.
We say that a compact set $E \subset \partial \bB_d$ is an \emph{interpolation set} for $A(\cH)$
if the restriction map $A(\cH) \to C(E)$ is surjective.
Theorem \ref{thm:Bishop}, or already Theorem \ref{thm:pick_peak_general},
implies in particular that every $\Mult(\mathcal{H})$-totally null set is an interpolation
set for $A(\mathcal{H})$. We will establish the converse under the assumption that there exist non-empty $\Mult(\mathcal{H})$-totally
null sets at all.

Recall that a linear map $\Phi: X \to Y$ between operator spaces is said to be
\emph{completely surjective} if there exists a constant $C > 0$ so that for all $n \in \bN$
and for all $y \in M_n(Y)$, there exists $x \in M_n(X)$ with $\Phi^{(n)}(x) = y$
and $\|x\| \le C \|y\|$.

\begin{lem}
  \label{lem:completely_surjective}
  Let $\cA$ be a complete unital operator algebra, let $X$ be a compact Hausdorff space and let $\Phi: \cA \to C(X)$
  be a unital homomorphism.
  \begin{enumerate}[label=\normalfont{(\alph*)}]
    \item If $\Phi$ is surjective, then $\Phi$ is completely surjective.
    \item If $\Phi$ is a quotient map, then $\Phi$ is a complete quotient map.
  \end{enumerate}
\end{lem}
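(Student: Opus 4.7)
For part (b), the plan is to realize $\cA/\ker(\Phi)$, equipped with its quotient operator space structure, as a commutative $C^*$-subalgebra of some $B(K)$, which forces it to carry the minimal operator space structure of $C(X)$. By the Blecher--Ruan--Sinclair characterization of (abstract) operator algebras, $\cA/\ker(\Phi)$ admits a completely isometric unital embedding $\iota$ into some $B(K)$. The quotient hypothesis supplies a unital isometric Banach-algebra isomorphism $\tilde\Phi : \cA/\ker(\Phi) \to C(X)$, so that $\pi := \iota \circ \tilde\Phi^{-1} : C(X) \to B(K)$ is a unital isometric algebra homomorphism.

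The crux is then to upgrade $\pi$ to a $*$-homomorphism. For any real-valued $f \in C(X)$, the functional calculus and the fact that $\pi$ is a contractive algebra map give
\begin{equation*}
\|e^{it\pi(f)}\|_{B(K)} \;=\; \|\pi(e^{itf})\| \;=\; \|e^{itf}\|_\infty \;=\; 1 \qquad (t \in \mathbb{R}),
\end{equation*}
and the same bound for $-t$ shows that $e^{it\pi(f)}$ has an isometric inverse, so it is unitary. Stone's theorem then forces $\pi(f)$ to be self-adjoint. For arbitrary $f = \Re f + i \Im f$, the operator $\pi(f)$ is a sum of two commuting self-adjoints, hence normal, and $\pi(\bar f) = \pi(f)^*$. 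Thus $\pi$ is a unital $*$-isomorphism between commutative $C^*$-algebras and, in particular, a complete isometry. Since $\iota$ is also a complete isometry, the factorization $\tilde\Phi^{-1} = \iota^{-1} \circ \pi$ shows that $\tilde\Phi^{-1}$, and hence $\tilde\Phi$, is completely isometric, which is exactly the statement that $\Phi$ is a complete quotient map.

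For part (a), the open mapping theorem supplies a surjection constant $c > 0$ such that the induced map $\tilde\Phi^{-1} : C(X) \to \cA/\ker(\Phi) \subset B(K)$ is a unital bounded homomorphism of norm at most $c$. The classical similarity theorem for uniformly bounded representations of commutative $C^*$-algebras (a Dixmier--Day averaging argument against the amenable unitary group of $C(X)$) then furnishes an invertible $S \in B(K)$ with $\|S\|\|S^{-1}\| \le c^2$ such that $\operatorname{Ad}(S) \circ \tilde\Phi^{-1}$ is a $*$-representation; in particular, $\|\tilde\Phi^{-1}\|_{\mathrm{cb}} \le c^2$, which translates into complete surjectivity of $\Phi$ with constant $c^2$. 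The main obstacle in this plan is the clean interplay between the abstract quotient operator space structure on $\cA/\ker(\Phi)$ and any concrete realization inside $B(K)$; once the Blecher--Ruan--Sinclair step is set up and the two structures are matched, both Stone's theorem and the similarity theorem apply in a standard way.
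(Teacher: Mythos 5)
Your argument is correct, but it splits into two routes where the paper uses one. The paper's proof handles (a) and (b) simultaneously: after noting that $\Phi$ is automatically (completely) contractive because characters are contractive and $C(X)$ is a commutative $C^*$-algebra, it forms the inverse $\Psi=\widetilde{\Phi}^{-1}:C(X)\to \cA/\ker(\Phi)$ and quotes the similarity result for bounded unital homomorphisms of $C(X)$ (\cite{Paulsen02}, Theorem 9.7) to get $\|\Psi\|_{cb}\le\|\Psi\|^2$; part (b) is then just the case $\|\Psi\|\le 1$. Your part (a) is essentially this argument with the cited black box opened up (Day--Dixmier averaging over the abelian, hence amenable, unitary group of $C(X)$), while your part (b) replaces the cb-estimate by a direct and more elementary observation: a unital isometric (indeed merely contractive) homomorphism of $C(X)$ into $B(K)$ sends unitaries to unitaries and is therefore a $*$-homomorphism, hence completely isometric. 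Stone's theorem is heavier machinery than needed here --- differentiating $(e^{it\pi(f)})^*=e^{-it\pi(f)}$ at $t=0$, or simply citing the standard fact that contractive unital homomorphisms of $C^*$-algebras are $*$-homomorphisms, does the job --- but the reasoning is sound. Both your proof and the paper's rely on the fact (which you correctly flag) that $\cA/\ker(\Phi)$ with its quotient operator space structure is again an operator algebra, so that it admits a completely isometric unital representation; this is standard (see \cite{BL04}) and the paper leaves it implicit. Two small points: you should record, as the paper does, that $\Phi$ is automatically bounded (characters are contractive), since the open mapping theorem in (a) and the closedness of $\ker(\Phi)$ need it; and the surjectivity constant you obtain is any number strictly larger than $c^2$ unless you invoke proximinality of the kernel, though this is immaterial since the definition of complete surjectivity only asks for some constant.
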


\begin{proof}
  Since characters on unital Banach algebras are contractive, $\Phi$ is contractive, and since $C(X)$
  is a commutative $C^*$-algebra, $\Phi$ is in fact completely contractive.
  In each case, $\Phi$ is surjective, so the open mapping theorem
  implies that the induced map
  \begin{equation*}
    \widetilde{\Phi}: \cA / \ker(\Phi) \to C(X)
  \end{equation*}
  has a bounded inverse $\Psi$, which clearly is a unital homomorphism.
  Using again that $C(X)$ is a commutative $C^*$-algebra, we find that $\Psi$
  is completely bounded with $\|\Psi\|_{cb} \le \|\Psi\|^2$; see \cite[Theorem 9.7]{Paulsen02}.
  This proves (a). Moreover, if $\Phi$ is a quotient map,
  then $\|\Psi\| \le 1$, hence $\|\Psi\|_{cb} \le 1$, which proves (b).
\end{proof}

\begin{rem}
  This provides another proof of the first part of Lemma \ref{lem:Tietze}.
 It is a bit shorter, but relies on more machinery about completely bounded maps.
\end{rem}  

We are now ready to show that interpolation sets are totally null.
Our proof is inspired by the proof in \cite[Theorem 10.2.2]{Rudin08} of a theorem of Varopoulos.
The main difference is that we replace pointwise arguments involving a clever choice of roots of unity
with arguments involving matrices of multipliers.

\begin{thm} \label{T:interpolation sets}
  Let $\cH$ be a regular unitarily invariant space and let $E \subset \partial \bB_d$ be compact.
  Suppose that there exist non-empty $\Mult(\cH)$-totally null sets.
  If $E$ is an interpolation set for $A(\cH)$, then $E$ is $\Mult(\cH)$-totally null.
\end{thm}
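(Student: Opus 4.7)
The plan is to derive a contradiction from the assumption that $E$ is an interpolation set but is not $\Mult(\cH)$-totally null. By Lemma~\ref{lem:completely_surjective}(a) applied to the unital homomorphism $R : A(\cH) \to C(E)$, surjectivity of $R$ upgrades to complete surjectivity: there exists a constant $C > 0$ so that every $G \in M_n(C(E))$ admits a lift $F \in M_n(A(\cH))$ with $\|F\|_{M_n(\Mult(\cH))} \le C \|G\|_{M_n(C(E))}$. If $E$ is not $\Mult(\cH)$-totally null, Lemma~\ref{lem:TN_elementary} yields a positive $\Mult(\cH)$-Henkin probability measure $\mu$ concentrated on $E$; the hypothesis that non-empty totally null sets exist forces singletons in $\partial \bB_d$ to be totally null by Lemma~\ref{lem:TN_existence}, so $\mu$ has no atoms.

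Following the strategy of Proposition~\ref{P:PPI implies TN}, the goal is to construct a bounded sequence $(f_n) \subset A(\cH)$ that tends to $0$ in the weak-$*$ topology of $\Mult(\cH)$ while $|\int f_n \, d\mu|$ stays bounded away from $0$; this contradicts $\mu$ being $\Mult(\cH)$-Henkin. Fix a sequence $(w_k)_{k\ge 1}$ dense in $\bB_d$. It suffices to produce $f_n \in A(\cH)$ with $\|f_n\|_{\Mult(\cH)}$ bounded independently of $n$, $\max_{k \le n} |f_n(w_k)| \to 0$ as $n \to \infty$, and $|\int f_n \, d\mu| \ge \delta$ for some $\delta > 0$. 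Indeed, Vitali's theorem on bounded families of holomorphic functions on $\bB_d$ then upgrades pointwise convergence to $0$ on the dense set $(w_k)$ to locally uniform convergence, hence to weak-$*$ convergence in $\Mult(\cH)$, producing the contradiction.

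The construction of $f_n$ replaces the scalar roots-of-unity trick in Rudin's proof of Varopoulos's theorem by matrix-valued interpolation. Using the non-atomicity of $\mu$, partition $E$ into $n$ Borel sets of $\mu$-measure $1/n$; approximate from inside by disjoint compacts $K_j$ with $\mu(K_j) \ge (1-1/n)/n$; and choose continuous bumps $\phi_j \in C(\partial \bB_d)$ with $\phi_j|_{K_j} = 1$ and pairwise disjoint supports. Assemble these into a matrix $G_n \in M_n(C(E))$ via the unitary Fourier matrix $U = (\omega^{ij}/\sqrt{n})$ with $\omega = e^{2\pi i/n}$, in such a way (for instance $G_n(x) = \operatorname{diag}(\phi_j(x))\,U$) that $\|G_n\|_{M_n(C(E))} \le 1$. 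By complete surjectivity, lift to $F_n \in M_n(A(\cH))$ with $\|F_n\|_{M_n(\Mult(\cH))} \le C$, and extract candidate scalar functions as row--column products of $F_n$ against unit vectors in $\bC^n$ coming from the Fourier basis; each such candidate lies in $A(\cH)$ with multiplier norm at most $C$.

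The main obstacle is the final extraction. One needs that among the $n$ natural candidates obtained from $F_n$, at least one simultaneously (i) keeps $|\int f_n \, d\mu|$ bounded below, and (ii) has small values at each of the first $n$ dense points $w_1, \ldots, w_n$. The $\ell^2$-orthogonality furnished by the Fourier matrix yields a Parseval-type bound $\sum_i |c_i(w_k)|^2 \le C^2$ on the $n$ candidate evaluations at each $w_k$, and a two-dimensional pigeonhole argument across $n$ candidates and $n$ prescribed points produces at least one $f_n$ with $\max_{k \le n}|f_n(w_k)| = O(\sqrt{n/n^2})=O(1/\sqrt{n}) \to 0$. On the other hand, because $\mu$ is supported on $E$, the integral $\int f_n \, d\mu$ depends only on $f_n|_E$, which by design of $G_n$ inherits a fixed positive lower bound (after an appropriate choice of the extraction vector adapted to the Fourier structure), yielding (i). Balancing these two estimates furnishes the desired sequence and completes the contradiction.
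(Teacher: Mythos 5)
Your overall skeleton agrees with the paper's: reduce to showing $\mu(E)=0$ for a positive $\Mult(\cH)$-Henkin measure $\mu$ concentrated on $E$ (Lemma \ref{lem:TN_elementary}), upgrade surjectivity of the restriction map to complete surjectivity with a constant $C$ (Lemma \ref{lem:completely_surjective}), and then produce a bounded sequence in $A(\cH)$ tending to $0$ weak-$*$ whose integrals against $\mu$ do not tend to $0$. The gap is in the construction of that sequence, and it is exactly at the sentence ``after an appropriate choice of the extraction vector adapted to the Fourier structure.'' With $G_n(x)=\operatorname{diag}(\phi_j(x))U$ and disjointly supported bumps $\phi_j$ with $\int\phi_j\,d\mu\approx 1/n$, \emph{every} scalar compression $c=u^*F_nv$ satisfies, on $E$, $c(x)=\sum_j \overline{u_j}\,\phi_j(x)\,(Uv)_j$, so at each $x\in E$ at most one term is active and $|c(x)|\le |u_j|\,|(Uv)_j|$ there. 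Since $u$ and $Uv$ are unit vectors, either the coordinates are spread out (each product is $O(1/n)$, or $O(1/\sqrt n)$ for Fourier-basis vectors) or $u$ is concentrated on one coordinate, in which case $c$ is supported on a single bump of $\mu$-measure $\approx 1/n$. In all cases $\int |c|\,d\mu = O(n^{-1/2})\to 0$, so requirement (i) fails for \emph{every} candidate; no choice of extraction vectors rescues it within this design. The pigeonhole for (ii) is also miscounted: from $\sum_i|c_i(w_k)|^2\le C^2$ at each of $n$ points one only excludes at most $nC^2/t^2$ candidates, which is vacuous unless $t>C$; one would need $o(n)$ interior points to get smallness $t\to 0$, and even after that repair you must find a single index satisfying (i) and (ii) simultaneously, which the counting cannot deliver when the set of boundary-good candidates is empty, as it is here.

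The missing idea is the one the paper uses to make a \emph{single} function do both jobs: peak functions at points of $E$. Because non-empty $\Mult(\cH)$-totally null sets exist, singletons are totally null, and Theorem \ref{thm:Bishop} gives $f_i\in A(\cH)$ with $f_i\approx 1$ on a neighborhood of $z_i\in E$, $\|f_i\|_{\Mult(\cH)}\le 1$, and (after taking a large power) $|f_i|\le\ep$ on a prescribed compact $K\subset\bB_d$; this is where the standing hypothesis really enters, not merely through non-atomicity of $\mu$. Complete surjectivity together with Lemma \ref{lem:Tietze} is then used only to produce a row $\begin{bmatrix} g_1&\cdots&g_n\end{bmatrix}$ and a column of $h_i$'s of multiplier norm at most $C$ with $g_i=h_i=\delta_{ij}$ on disjoint compacts $E_j$ carrying most of $\mu$, and the factorization $f=\sum_i g_if_ih_i$ gives $\|f\|_{\Mult(\cH)}\le C^2$, $|1-f|<\ep$ on $\bigcup_j E_j$ (so $\int f\,d\mu\to\mu(E)$ along the construction), and $|f|\le\ep C^2$ on $K$. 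The interior smallness and the boundary lower bound are built into the same function from the start rather than extracted afterwards by orthogonality and counting; without such a mechanism your two requirements pull in opposite directions and the argument does not close.
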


\begin{proof}
Let $\mu$ be a positive $\Mult(\cH)$-Henkin measure concentrated on $E$. We have to show that $\mu(E) = 0$;
see Lemma \ref{lem:TN_elementary}.
Since $E$ is an interpolation set, an application of Lemma \ref{lem:completely_surjective} yields
that the restriction map $A(\cH) \to C(E)$ is completely surjective, say with constant $C > 0$.

In the first step, we will show that for each $\varepsilon > 0$ and for each compact set $K \subset \bB_d$,
there exists $f \in A(\cH)$ with
\begin{enumerate}[label=\normalfont{(\alph*)}]
\item $\mu( \{ |1 - f| \ge \varepsilon \}) < \varepsilon$,
\item $|f| \le \varepsilon C^2$ on $K$, and
\item $\|f\|_{\Mult(\cH)} \le C^2$.
\end{enumerate}

Since there exist non-empty $\Mult(\cH)$-totally null sets, each singleton set is totally
null by unitary invariance of $\cH$. Hence Theorem \ref{thm:Bishop} implies that for
each $z \in E$, there exists $f_z \in A(\cH)$ with $f_z(z) = 1$, $|f_z(w)| < 1$ for $w \in \ol{\bB_d} \setminus \{z\}$ and $\|f_z\|_{\Mult(\cH)} = 1$. By replacing $f_z$ with a sufficiently large power of $f_z$,
we may achieve in addition that $|f_z| \le \varepsilon$ on $K$.
Compactness of $E$ allows us to find finitely many $f_i = f_{z_i}$, $1 \le i \le n$, so that
the open sets $\{ |1 - f_i| < \varepsilon \}$ cover $E$. By regularity of $\mu$, there exist
disjoint compact sets $E_1,\ldots,E_n$ so that
\begin{equation}
\label{eqn:E_i_inclusion}
E_i \subset E \cap \{ |1 - f_i| < \varepsilon \}
\end{equation}
for each $i$
and
\begin{equation}
\label{eqn:measure_complement}
\mu \Big( \partial \bB_d \setminus \bigcup_{i=1}^n E_i \Big) < \varepsilon,
\end{equation}
as $\mu$ is concentrated on $E$.
Since the restriction map $A(\cH) \to C(E)$ is completely surjective with constant $C$,
Tietze's extension theorem (Lemma~\ref{lem:Tietze}) implies
that the restriction map $A(\cH) \to C( \cup_i E_i)$ is also completely surjective with constant $C$. 
Therefore,  since the $E_i$ are disjoint, there exist
$g_1,\ldots,g_n \in A(\cH)$ so that
$g_i = \delta_{i j}$ on $E_j$ for $1 \le i,j \le n$, and
\begin{equation*}
\|
 \begin{bmatrix}
  g_1 & \cdots & g_n
 \end{bmatrix} \|_{\Mult(\cH \otimes \bC^n, \cH)} \le C.
\end{equation*}
By the same token, there exist
$h_1,\ldots,h_n \in A(\cH)$ so that
$h_i = \delta_{i j}$ on $E_j$ for $1 \le i,j \le n$, and
\begin{equation*}
 \Bigg\|
 \begin{bmatrix}
   h_1 \\ \vdots \\ h_n
 \end{bmatrix} \Bigg\|_{\Mult(\cH, \cH \otimes \bC^n)} \le C.
\end{equation*}
Let
\begin{equation*}
 f = \sum_{i=1}^n g_i f_i h_i
 = 
 \begin{bmatrix}
  g_1 & \cdots & g_n
 \end{bmatrix}
 \begin{bmatrix}
  f_1 & 0 & \cdots & 0 \\
  0 & f_2 & \cdots & 0 \\
  \vdots & \ddots  & \ddots & \vdots \\
  0 & 0  & \cdots & f_n
 \end{bmatrix}
 \begin{bmatrix}
  h_1 \\ \vdots \\ h_n
 \end{bmatrix}.
\end{equation*}
Since $\|f_i\|_{\Mult(\cH)} \le 1$ for each $i$, the matrix representation
shows that $\|f\|_{\Mult(\cH)} \le C^2$, i.e.\ (c) holds.
If $z \in K$, then $|f_i(z)| \le \varepsilon$ for each $i$, hence using
that the supremum norm is dominated by the multiplier norm also for matrices,
we find that $|f(z)| \le\varepsilon C^2$, i.e.\ (b) holds.
To show (a), notice that if $z \in E_i$, then $f(z) = f_i(z)$,
hence $|1 - f(z)| < \varepsilon$ by \eqref{eqn:E_i_inclusion}. In conjunction with
\eqref{eqn:measure_complement}, this yields that $\mu \{ |1 - f| \ge \varepsilon \} < \varepsilon$,
i.e.\ (a) holds. This finishes the construction of $f$.

Applying the first step to a sequence $(\varepsilon_n)$ decreasing to zero and to the compact
sets $K = r_n \ol{\bB_d}$, where $r_n \in (0,1)$ increases to $1$, we obtain
a bounded sequence $(f_n)$ in $A(\cH)$ that converges to $0$ uniformly on compact subsets of $\bB_d$
and to $1$ in $\mu$-measure. Thus, $(f_n)$ tends to zero in the weak-$*$ topology of $\Mult(\cH)$.
From the dominated convergence theorem and the fact that $\mu$ is $\Mult(\cH)$-Henkin, we infer that
\begin{equation*}
\mu (E) = \lim_{n \to \infty} \int_{\partial \bB_d} f_n \, d\mu
= 0.
\end{equation*}
This shows that $E$ is $\Mult(\cH)$-totally null, as desired.
\end{proof}

If there do not exist non-empty $\Mult(\cH)$-totally null sets, then there
are no non-trivial interpolation sets for $A(\cH)$.

\begin{prop}
  \label{prop:no_interpolation}
  Let $\cH$ be a regular unitarily invariant space on $\bB_d$ that does not admit
  non-empty $\Mult(\cH)$-totally null sets. Then a compact set $E \subset \partial \bB_d$
  is an interpolation set for $A(\cH)$ if and only if $E$ is finite.
\end{prop}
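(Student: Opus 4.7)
The easy direction is straightforward: if $E = \{z_1, \ldots, z_n\}$ is finite, then by Lagrange-style interpolation with polynomials (choose polynomials $p_i$ with $p_i(z_j) = \delta_{ij}$, which exist since the points are distinct), the restriction map $A(\cH) \to C(E) \cong \bC^n$ is surjective, so $E$ is an interpolation set.

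For the nontrivial direction, assume $E$ is a compact interpolation set for $A(\cH)$. My plan is to show that $C(E)$ is reflexive, which forces $E$ to be finite. Under the hypothesis that there are no non-empty $\Mult(\cH)$-totally null sets, Lemma~\ref{lem:TN_existence} yields that every $\mu \in M(\partial \bB_d)$ is $\Mult(\cH)$-Henkin, while Proposition~\ref{prop:Henkin_dichotomy}(a) gives $\Mult(\cH) \subset A(\bB_d) \subset C(\ol{\bB_d})$, with the inclusion continuous by the closed graph theorem. Together these imply that the restriction map $S: \Mult(\cH) \to C(E)$, $f \mapsto f|_E$, is weak-$*$ to weakly continuous: for any $\mu \in M(E) \subset M(\partial \bB_d)$, the functional $f \mapsto \int S(f)\, d\mu = \int f\, d\mu$ is weak-$*$ continuous on $\Mult(\cH)$ by the Henkin property.

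By Banach--Alaoglu the unit ball $B$ of $\Mult(\cH)$ is weak-$*$ compact, so $S(B)$ is weakly compact in $C(E)$. Since $E$ is an interpolation set, the open mapping theorem applied to the surjection $R: A(\cH) \to C(E)$ yields $\delta > 0$ with $R(B_{A(\cH)}) \supset \delta \, B_{C(E)}$, where $B_X$ denotes the closed unit ball of $X$. Since $A(\cH) \subset \Mult(\cH)$ is a contraction, $\delta \, B_{C(E)} \subset S(B)$. Because $B_{C(E)}$ is weakly closed in $C(E)$ (being norm closed and convex, by Mazur's theorem), it sits inside the weakly compact set $\delta^{-1} S(B)$ and is therefore weakly compact. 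Hence $C(E)$ is reflexive.

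To finish, reflexivity of $C(E)$ forces $E$ to be finite: if $E$ were infinite, one could extract a sequence $(z_n)$ of distinct points converging to some $z_0 \in E$, and construct continuous functions $\phi_n \in C(E)$ with pairwise disjoint supports, $\phi_n(z_n) = 1$, and $0 \le \phi_n \le 1$; the closed linear span of $\{\phi_n\}$ is then isomorphic to $c_0$, contradicting reflexivity. The main conceptual step is recognizing that the absence of totally null sets is precisely what makes the restriction map $S$ weak-$*$ to weakly continuous; the rest is standard Banach space machinery.
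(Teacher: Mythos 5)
Your overall strategy --- showing that an interpolation set forces $C(E)$ to be reflexive, hence finite --- is sound and genuinely different from the paper's argument, but the pivotal step is not justified as written. You claim that for every $\mu \in M(E)$ the functional $f \mapsto \int_E f|_E \, d\mu$ on $\Mult(\cH)$ is weak-$*$ continuous ``by the Henkin property.'' The Henkin property of $\mu$ only produces \emph{some} weak-$*$ continuous functional on $\Mult(\cH)$ that agrees with integration against $\mu$ on $A(\cH)$; it does not by itself identify that functional with $f \mapsto \int_E \tilde f \, d\mu$, where $\tilde f$ denotes the continuous extension of $f$ to $\ol{\bB_d}$. This is precisely the subtlety flagged in Remark \ref{rem:everything_henkin}: for the Salas space every measure is Henkin, yet there are bounded functionals on $\Mult(\cH)$ agreeing with a natural functional on $A(\cH)$ that are not weak-$*$ continuous. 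So you must prove directly that $\rho_\mu: f \mapsto \int_E \tilde f\, d\mu$ is weak-$*$ continuous, which can be done with the tools you already cite: by Proposition \ref{prop:Henkin_dichotomy}(a) each boundary evaluation $\tau_\zeta$ is weak-$*$ continuous and satisfies $\tau_\zeta(f) = \tilde f(\zeta)$; since $\Mult(\cH)_*$ is separable, the weak-$*$ topology is metrizable on bounded sets, so for a bounded sequence $f_n \to f$ weak-$*$ one gets $\tilde f_n \to \tilde f$ pointwise on $E$ and boundedly, hence $\rho_\mu(f_n) \to \rho_\mu(f)$ by dominated convergence, and the Krein--Smulian theorem upgrades this to weak-$*$ continuity of $\rho_\mu$. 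With this repair the rest of your chain (weak-$*$ compactness of the ball of $\Mult(\cH)$, open mapping theorem, weak compactness of the unit ball of $C(E)$, reflexivity, and the copy of $c_0$ inside $C(E)$ when $E$ is infinite) is correct.

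For comparison, the paper argues more directly and avoids measures altogether: from the interpolation hypothesis, the open mapping theorem and Tietze's theorem it produces a bounded sequence $f_n \in A(\cH)$ with $f_n(z_k) = (-1)^k$ for $k \le n$ along a sequence of distinct points $z_k \to z$ in $E$, takes a weak-$*$ cluster point $f \in \Mult(\cH)$, and invokes Proposition \ref{prop:Henkin_dichotomy}(a) to conclude that $f$ extends continuously to $\ol{\bB_d}$ with $f(z_k) = (-1)^k$ for all $k$, contradicting continuity at $z$. That route needs only the weak-$*$ continuity of point evaluations, whereas your argument buys a structural statement (reflexivity of $C(E)$) at the cost of the dominated-convergence/Krein--Smulian bookkeeping above.
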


\begin{proof}
  Clearly, finite sets are interpolation sets since $A(\cH)$ contains the polynomials.

  Conversely, assume for a contradiction
  that $E$ is an infinite interpolation set for $A(\cH)$.
  Then $E$ contains a sequence $(z_n)$ of distinct
  points that converges to some $z \in E$.
  Applying the open mapping theorem to the restriction map $A(\cH) \to C(E)$ and using
  Tietze's extension theorem,
  we obtain a constant $C > 0$ so that for each $n \in \bN$, there exists a function
  $f_n \in A(\cH)$ with $f_n(z_k) = (-1)^k$ for $k \le n$ and $\|f_n\|_{A(\cH)} \le C$.

  Weak-$*$ compactness of the unit ball of $\Mult(\cH)$ shows that the sequence $(f_n)$
  has a weak-$*$ cluster point $f \in \Mult(\cH)$. Since $\cH$ does not admit non-empty $\Mult(\cH)$-totally
  null sets, part (a) of Proposition \ref{prop:Henkin_dichotomy}
  shows that every function in $\Mult(\cH)$ has a unique extension to a continuous
  function on $\ol{\bB_d}$ and that evaluation at each point in $\partial \bB_d$ is weak-$*$ continuous.
  It follows that $f(z_k) = (-1)^k$ for all $k \in \bN$.
  This contradicts the fact that $\lim_{k \to \infty} z_k = z$ and that $f$ is continuous at $z$.
\end{proof}

\section{Zero sets} \label{S:zero_sets}

It is known that a compact set
$E \subset \partial \bB_d$ is $H^\infty(\bB_d)$-totally null if and only if it is the
zero set of a function in the ball algebra; see \cite[Chapter 10]{Rudin08}.
In the Drury--Arveson space, a theorem of Clou\^atre and the first author \cite[Proposition 5.1]{CD18}
shows that every compact $\Mult(H^2_d)$-totally null subset of $\partial \bB_d$ is a zero set of a function in $A(H^2_d)$. 
We have generalized this by establishing that compact $\Mult(\cH)$-totally null sets are zero sets for $A(\cH)$ in Corollary~\ref{C:zero sets}.
The authors of \cite{CD18} also ask in \cite[Questions 5.2 and 5.3]{CD18} whether the converse holds.
We take this opportunity to point out that the example constructed in \cite{Hartz17} provides a negative answer to this question.

\begin{thm}
\label{thm:zero}
For each $d \ge 2$, there exists a function $f \in A(H^2_d)$ whose
zero set $\{z \in \ol{\bB_d}: f(z) = 0\}$ is contained in $\partial \bB_d$
and supports a $\Mult(H^2_d)$-Henkin probability measure.
In particular, the zero set of $f$ is not $\Mult(H^2_d)$-totally null.
\end{thm}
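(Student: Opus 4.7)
The plan is to invoke the construction from \cite{Hartz17} and extract from it both the function and the measure required. In that reference, for each $d \geq 2$, a regular Borel probability measure $\mu$ on $\partial \bB_d$ is produced that is $\Mult(H^2_d)$-Henkin but fails to be classically Henkin in the sense of \cite[Chapter 9]{Rudin08}. My first step would be to reduce to the case of a classically totally singular Henkin measure. Applying the classical Glicksberg--K\"onig--Seever decomposition splits $\mu = \mu_a + \mu_s$ with $\mu_a$ classically Henkin (hence $\Mult(H^2_d)$-Henkin by Lemma~\ref{lem:Henkin_inclusion}) and $\mu_s$ classically totally singular; the band property from Lemma~\ref{lem:hen_band} then forces $\mu_s = \mu - \mu_a$ to remain $\Mult(H^2_d)$-Henkin, and $\mu_s \neq 0$ because $\mu$ itself is not classically Henkin. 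After normalizing, I obtain a classically totally singular, $\Mult(H^2_d)$-Henkin probability measure $\nu$.

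Next I would use the classical structure theorem (see \cite[Chapter 9]{Rudin08}) to conclude that $\nu$ is concentrated on a classically totally null $F_\sigma$ set; inner regularity then yields a compact classically totally null set $E \subset \partial \bB_d$ with $\nu(E) > 0$, so that $\nu(E)^{-1}\nu|_E$ is a $\Mult(H^2_d)$-Henkin probability measure supported on $E$. By Rudin's classical peak-set theorem \cite[Chapter 10]{Rudin08}, $E$ is a peak set for the ball algebra, and subtracting the peak function from $1$ exhibits $E$ as the entire zero set in $\ol{\bB_d}$ of some function $f_0 \in A(\bB_d)$ with $Z(f_0) = E \subset \partial \bB_d$.

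The hard part is producing such a function not merely in $A(\bB_d)$ but in the potentially strictly smaller algebra $A(H^2_d)$, because our Corollary~\ref{C:zero sets} cannot be applied directly: the set $E$ is precisely not $\Mult(H^2_d)$-totally null. I expect this obstacle to be overcome by using the explicit form of \cite{Hartz17}'s construction, rather than only the abstract existence of $\mu$. In \cite{Hartz17}, the non-classical Henkin measure is produced together with a concrete multiplier of the Drury--Arveson space, and one can arrange for that multiplier to lie already in $A(H^2_d)$ and to have its full zero set in $\ol{\bB_d}$ coincide with a compact subset of $\partial \bB_d$ on which $\nu$ (or the corresponding measure from \cite{Hartz17}) has positive mass. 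Taking $f$ to be this function then yields $f \in A(H^2_d)$ with $Z(f) \subset \partial \bB_d$, and the restriction of the Henkin measure to $Z(f)$ furnishes a $\Mult(H^2_d)$-Henkin probability measure supported on $Z(f)$, showing that $Z(f)$ is not $\Mult(H^2_d)$-totally null.
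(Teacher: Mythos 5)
Your proposal shares the paper's starting point (the $\Mult(H^2_d)$-Henkin but non-classical measure from \cite{Hartz17}), but it stops exactly where the real work begins, and that is a genuine gap. The preliminary reduction (GKS decomposition, passing to a classically totally singular part via the band property, finding a compact classically totally null set of positive measure, and invoking Rudin's peak-set theorem to get $f_0 \in A(\bB_d)$ with $Z(f_0) = E$) is sound but ultimately useless for the theorem: membership in $A(\bB_d)$ cannot be upgraded to membership in $A(H^2_d)$, since the inclusion $A(H^2_d) \subset A(\bB_d)$ is strict and the multiplier norm is not comparable to the supremum norm, so a generic ball-algebra peak function for $E$ need not be a multiplier of $H^2_d$ at all. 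The entire content of the theorem is the construction of a function in $A(H^2_d)$ whose zero set carries the Henkin measure, and your proposal defers this to the hope that the construction in \cite{Hartz17} "can be arranged" to supply such a function, without exhibiting it or estimating any multiplier norm.

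For comparison, the paper carries out that step explicitly. For $d=4$ the measure of \cite{Hartz17} is supported on $E = r^{-1}(1)$ with $r(z) = 16 z_1 z_2 z_3 z_4$, and $f = 1-r$ is a polynomial (hence trivially in $A(H^2_4)$) with $Z(f) = E \subset \partial\bB_4$. For $d=2$ (which suffices for all $d \ge 2$ via the trivial extension $f \circ P$ and \cite[Lemma 2.3]{Hartz17}), the measure is supported in $E = r^{-1}(F)$ with $r(z) = 2z_1z_2$ and $F$ the circular middle-thirds Cantor set; since $F$ is a Carleson set there is $h \in A(\bD)$ with $h'' \in A(\bD)$ vanishing exactly on $F$, and one sets $f = h \circ r$. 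The decisive point — absent from your sketch — is the quantitative verification that $f \in A(H^2_2)$: the coefficient bound $|a_n| \le C(n+1)^{-2}$ (from $h'' \in A(\bD)$) combined with the computation $\|r^n\|_{\Mult(H^2_2)}^2 = \|r^n\|_{H^2_2}^2 = 4^n (n!)^2/(2n)! \sim \sqrt{\pi}\,\sqrt{n+1}$ gives $\sum_n |a_n| \|r^n\|_{\Mult(H^2_2)} \lesssim \sum_n (n+1)^{-7/4} < \infty$, so the series converges absolutely in $A(H^2_2)$. Without an argument of this kind (or the polynomial shortcut available for $d \ge 4$), your proof is incomplete at its central step.
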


In \cite{Hartz17}, a probability measure $\mu$ on $\partial \bB_d$ was constructed
such that $\mu$ is $\Mult(H^2_d)$-Henkin, but the support $E$ of $\mu$ is $H^\infty(\bB_d)$-totally
null. We will show that $E$ is the zero set of a function in $A(H^2_d)$.

It is easy to see that if $f \in A(H^2_d)$ satisfies the conclusion of Theorem \ref{thm:zero},
then for any $d' \ge d$, the trivial extension $f \circ P$ of $f$ to $\bB_{d'}$, where $P$ is the orthogonal
projection onto the first $d$ coordinates, satisfies the conclusion
of the theorem as well. To see this, one has to observe that the map
\begin{equation*}
  A(H^2_d) \to A(H^2_{d'}), \quad f \mapsto f \circ P,
\end{equation*}
is continuous (in fact a complete isometry), which follows from an explicit computation
with the reproducing kernels or from the von Neumann inequality for $H^2_{d'}$. Moreover,
one has to check that the trivial extension of a $\Mult(H^2_d)$-Henkin
measure to $\partial \bB_{d'}$ is $\Mult(H^2_{d'})$-Henkin,
see \cite[Lemma 2.3]{Hartz17}.

It therefore suffices to establish Theorem \ref{thm:zero} for $d = 2$. In fact,
the construction in \cite{Hartz17} was significantly easier for $d=4$, so we will consider
that case first.

\begin{proof}[Proof of Theorem \ref{thm:zero} for $d=4$]
 We use the construction in Section 3 of \cite{Hartz17}.
 Let
\begin{equation*}
  r: \ol{\bB_4} \to \ol{\bD}, \quad z \mapsto 16 z_1 z_2 z_3 z_4,
\end{equation*}
and let $E = r^{-1}(1)$. The arithmetic mean--geometric mean inequality
implies that $r$ maps $\ol{\bB_4}$ onto $\ol{\bD}$ and $\bB_4$ onto $\bD$.
In particular, $E \subset \partial \bB_4$.
In \cite[Section 3]{Hartz17}, a $\Mult(H^2_4)$-Henkin measure $\mu$ supported on $E$ was
constructed as follows. Let
\begin{equation*}
  h: \mathbb{T}^3 \to \partial \mathbb{B}_4, \quad (\zeta_1,\zeta_2,\zeta_3) \mapsto \frac{1}{2} (\zeta_1,\zeta_2,\zeta_3,\overline{\zeta_1 \zeta_2 \zeta_3}),
\end{equation*}
let $m$ be the normalized Lebesgue measure on $\mathbb{T}^3$, and let
$\mu$ be the pushforward of $m$ by $h$:
\begin{equation*}
  \mu(A) = m( h^{-1}(A))
\end{equation*}
for Borel subsets $A \subset \partial \mathbb{B}_4$.
Since the range of $h$ is contained in $E$, the measure $\mu$ is supported on $E$.
Moreover, by \cite[Lemma 3.3]{Hartz17}, $\mu$ is $\Mult(H^2_4)$-Henkin.
Let $f = 1-r$. Then the zero set of $f$ is $E$; and since $f$ is a polynomial, $f \in A(H^2_4)$.
\end{proof}

We remark that the use of the arithmetic mean-geometric mean inequality
on the monomials $z_1 z_2 \ldots z_d$ in order to construct counterexamples in the Drury--Arveson space
already appears in Arveson's work, see the proof of \cite[Theorem 3.3]{Arveson98}.

The basic idea behind the proof in the case $d=2$ is the same, but as in \cite{Hartz17},
the construction becomes more involved in dimension $2$.
The following argument will establish Theorem \ref{thm:zero}
in full.

\begin{proof}[Proof of Theorem \ref{thm:zero} for $d=2$]
We use the construction in Section 4 of \cite{Hartz17}.
Let $F \subset \bT$ be the circular middle-thirds Cantor set, let
\begin{equation*}
  r: \ol{\bB_2} \to \ol{\bD}, \quad z \mapsto 2 z_1 z_2,
\end{equation*}
and let $E = r^{-1}(F)$.
Once again, $E \subset \partial \bB_2$ by the arithmetic mean--geometric mean inequality.
In \cite[Section 4]{Hartz17}, a $\Mult(H^2_2)$-Henkin measure supported on $E$ was
constructed as follows. Let $\sigma$ be the Cantor measure on $F$, let
\begin{equation*}
  h: \mathbb{T} \times F \to \partial \mathbb{B}_2, \quad (\zeta_1,\zeta_2) \mapsto \frac{1}{\sqrt{2}} (\zeta_1, \overline{\zeta_1} \zeta_2),
\end{equation*}
and let $\mu$ be the pushforward of $m \times \sigma$ by $h$.
Since the range of $h$ is contained $E$, the measure $\mu$ is supported on $E$.
Moreover, by \cite[Lemma 4.6]{Hartz17} (see also \cite[Remark 4.4]{Hartz17}),
$\mu$ is $\Mult(H^2_2)$-Henkin.

It remains to show that $E$ is the zero set of a function in $A(H^2_2)$.
To this end, notice that the middle-thirds Cantor set $F$ is a Carleson set, meaning
that $|F| = 0$ and $\sum_k |I_k| \log (1/|I_k|) < \infty$, where $I_k$ are the connected
components of $\bT \setminus F$ and $|I|$ denotes the linear Lebesgue measure of $|I|$.
By a theorem of Carleson (see, for instance Theorem 4.4.3 and Exercise 4.4.2 in \cite{EKM+14}),
there exists $h \in A(\bD)$ so that $h'' \in A(\bD)$ and so that
$F = \{\zeta \in \bT: h(\zeta) = 0\}$. Let $f = h \circ r$. 
Then the zero set of $f$ is $E$.

We finish the proof by showing that $f \in A(H^2_2)$.
Let $h(z) = \sum_{n=0}^\infty a_n z^n$ be the Taylor series of $h$,
so that
\begin{equation*}
  f(z) = \sum_{n=0}^\infty a_n r(z)^n.
\end{equation*}
Since $h{''} \in A(\bD)$, there exists a constant $C > 0$
so that $|a_n| \le \frac{C}{(n+1)^2}$ for all $n \in \bN$. Moreover,
since $r^n$ is a homogeneous polynomial for all $n \in \bN$, it follows
for instance from \cite[Proposition 6.4]{Hartz17} and the usual
formula of the norm of a monomial in $H^2_d$ that
\begin{equation*}
  \|r^n\|_{\Mult(H^2_2)}^2 = \| r^n\|_{H^2_2}^2
  = 4^n \frac{(n!)^2}{(2 n)!} \sim \sqrt{\pi} \sqrt{n+1},
\end{equation*}
where the last approximation is a consequence of Stirling's formula. Thus,
\begin{equation*}
  \sum_{n=0}^\infty |a_n| \|r^n\|_{\Mult(H^2_2)}
  \le C' \sum_{n=0}^\infty (n+1)^{-7/4} < \infty
\end{equation*}
for some constant $C' > 0$. This shows that the series defining $f$ converges
absolutely in the Banach algebra $A(H^2_2)$, hence $f \in A(H^2_2)$.
\end{proof}

The same principle, but in an easier fashion, also applies to the Dirichlet space $\cD$. 

\begin{prop}
The Cantor middle-thirds set $E \subset \bT$ is the zero set of a function
in $A(\cD)$, but is not $\Mult(\cD)$-totally null.
\end{prop}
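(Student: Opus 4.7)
The argument splits cleanly into the two claims of the proposition, and both can be handled by a light adaptation of machinery already in the paper.

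For the fact that $E$ is not $\Mult(\cD)$-totally null, the plan is to invoke the contrapositive of Proposition~\ref{prop:TN_cap0}: if $E$ were $\Mult(\cD)$-totally null, then $E$ would have logarithmic capacity zero. Classical potential theory shows that the middle-thirds Cantor set has positive logarithmic capacity. Indeed, the standard self-similar probability measure $\mu$ on $E$ satisfies $\mu(B(x, 3^{-n})) \le 2^{-n}$, and a short layer-cake computation based on this scaling shows that its logarithmic energy is finite. Thus $E$ supports a nonzero positive measure of finite energy and so cannot be totally null.

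For the second claim, I would mimic the $d = 2$ portion of the proof of Theorem~\ref{thm:zero}, but without the reduction map $r(z) = 2z_1 z_2$, since here we already live on $\bD$. The middle-thirds Cantor set is a Carleson set ($|E| = 0$ and $\sum_k |I_k|\log(1/|I_k|) < \infty$ for its complementary arcs $I_k$), so by Carleson's theorem on zero sets (\cite[Theorem 4.4.3 and Exercise 4.4.2]{EKM+14}) there exists $h \in A(\bD)$, which can be chosen outer, with $h'' \in A(\bD)$ and whose zero set in $\ol{\bD}$ is exactly $E$.

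The remaining task is to show $h \in A(\cD)$, which reduces to two ingredients. Writing $h(z) = \sum_{n \ge 0} a_n z^n$, the fact that $h'' \in A(\bD) \subset H^\infty(\bD)$ together with the Cauchy coefficient estimate applied to $h''$ yields $|a_n| \le C/(n+1)^2$. On the other hand, since $\|\sum c_k z^k\|_\cD^2 = \sum (k+1)|c_k|^2$, a direct computation gives
\begin{equation*}
  \|M_{z^n} f\|_\cD^2 = \sum_k (k+n+1)|c_k|^2 \le (n+1) \sum_k (k+1)|c_k|^2 = (n+1)\|f\|_\cD^2,
\end{equation*}
so $\|z^n\|_{\Mult(\cD)} \le \sqrt{n+1}$. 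Combining these,
\begin{equation*}
  \sum_{n=0}^\infty |a_n|\, \|z^n\|_{\Mult(\cD)} \le C \sum_{n=0}^\infty (n+1)^{-3/2} < \infty,
\end{equation*}
so the Taylor series of $h$ converges absolutely in the Banach algebra $A(\cD)$, giving $h \in A(\cD)$.

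The only genuine subtlety is arranging that Carleson's function $h$ has no interior zeros (so that its full zero set in $\ol{\bD}$ is $E$), which is handled by the outer-function choice; everything else — the capacity statement, the coefficient estimate from $h'' \in H^\infty$, and the elementary bound on $\|z^n\|_{\Mult(\cD)}$ — is routine.
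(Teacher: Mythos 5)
Your proposal is correct and follows essentially the same route as the paper: Carleson's theorem produces $h \in A(\bD)$ with $h'' \in A(\bD)$ (outer, so its closed-disc zero set is exactly $E$), absolute convergence of its Taylor series in $\Mult(\cD)$ — via $|a_n| \lesssim (n+1)^{-2}$ and $\|z^n\|_{\Mult(\cD)} \le \sqrt{n+1}$ — gives $h \in A(\cD)$, and the failure of total nullity follows from Proposition \ref{prop:TN_cap0} since the Cantor set has positive logarithmic capacity. You merely fill in details the paper leaves to the reader (the coefficient and multiplier-norm estimates, and a direct energy computation in place of the cited remark in \cite{EKM+14}), so no further comment is needed.
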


\begin{proof}
As in the proof of Theorem \ref{thm:zero}, there exists $f \in A(\bD)$
with $f^{''} \in A(\bD)$ whose zero set is $E$. One checks
that the Taylor series of $f$ converges absolutely in $\Mult(\cD)$, hence $f \in A(\cD)$.
On the other hand, $E$ has positive logarithmic capacity (see, for instance,
the remark at the end of Section 2.4 of \cite{EKM+14}), hence $E$ is not $\Mult(\cD)$-totally null
by Proposition \ref{prop:TN_cap0}.
\end{proof}

The following is an immediate consequence of the proof of Theorem~\ref{thm:zero} for the case $d\ge4$.

\begin{cor}\label{C:weak peaking}
For each $d \ge 4$, there is a compact subset $E \subset \partial\bB_d$ which is not $\Mult(H^2_d)$-totally null
for which, for any $\ep>0$, there is a function $f\in A(H^2_d)$ such that $f|_E = 1$, $|f(z)| < 1$ for all $z \in \ol{\bB_d}\setminus E$ and $\|f\|_{\Mult(\mathcal{H})} \le 1+\ep$.
\end{cor}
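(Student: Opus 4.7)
The plan is to read off Corollary~\ref{C:weak peaking} from the construction in the proof of Theorem~\ref{thm:zero} for $d = 4$, extended trivially for $d > 4$. I would let $r(z) = 16\, z_1 z_2 z_3 z_4$, viewed as a polynomial on $\bC^d$ depending only on the first four coordinates, and set $E = \{z \in \ol{\bB_d} : r(z) = 1\}$. An AM--GM computation gives $|r| \le 1$ on $\ol{\bB_d}$, with equality forcing $|z_i|^2 = 1/4$ for $i \le 4$ and $z_j = 0$ for $j > 4$; in particular $E \subset \partial\bB_d$. The $\Mult(H^2_4)$-Henkin probability measure constructed in \cite{Hartz17} is supported on the $d = 4$ version of $E$, and its trivial extension to $\partial\bB_d$ is $\Mult(H^2_d)$-Henkin (as noted in the discussion preceding the proof of Theorem~\ref{thm:zero} for $d=2$) and still supported on $E$. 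Hence $E$ is not $\Mult(H^2_d)$-totally null.

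For a given $\ep > 0$, I would take
\[
f = (1-t) + t r \in A(H^2_d),
\]
for $t \in (0,1)$ to be chosen. It is immediate that $f \equiv 1$ on $E$ since $r|_E = 1$. For the norm, $\|f\|_{\Mult(H^2_d)} \le (1-t) + t \|r\|_{\Mult(H^2_d)}$, and since $\|r\|_{\Mult(H^2_d)} = \|r\|_{\Mult(H^2_4)}$ is a fixed finite number (by the completely isometric embedding $A(H^2_4) \hookrightarrow A(H^2_d)$ referenced in the proof of Theorem~\ref{thm:zero} for $d=2$), this is at most $1 + \ep$ provided $t$ is chosen small enough.

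The only non-bookkeeping step is the strict bound $|f(z)| < 1$ on $\ol{\bB_d} \setminus E$. For $z \in \ol{\bB_d}$ one has $|f(z)| \le (1-t) + t|r(z)| \le 1$, and equality throughout would force both $|r(z)| = 1$ and that the two complex numbers $1-t$ and $tr(z)$ be nonnegative real multiples of each other; these conditions together imply $r(z) = 1$, i.e.\ $z \in E$. There is no genuine obstacle here: the whole argument reduces to the strict triangle inequality together with the boundedness of $\|r\|_{\Mult(H^2_d)}$, both trivially available once $E$ and $r$ are in hand.
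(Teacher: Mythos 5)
Your proposal is correct and is essentially the paper's own argument: the paper also takes $E=r^{-1}(1)$ with $r(z)=16z_1z_2z_3z_4$, reduces to $d=4$ via the trivial extension used in Theorem~\ref{thm:zero}, and uses the peaking function $f=\tfrac{n+r}{n+1}$, which is exactly your $(1-t)+tr$ with $t=\tfrac{1}{n+1}$. The norm bound via finiteness of $\|r\|_{\Mult(H^2_d)}$ and the strict inequality via the equality case of the triangle inequality match the paper's (tersely stated) reasoning.
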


\begin{proof}
  Once again, it suffices to consider the case $d=4$.
Let $r(z) = 16 z_1 z_2 z_3 z_4$ and $E = r^{-1}(1)$ as in the proof of Theorem~\ref{thm:zero}.
As observed there, $\|r\|_\infty = 1$. 
Let $f(z) =\tfrac{n + r(z)}{n+1}$ 
for $n$ sufficiently large.
\end{proof}

Of course, these results and examples raise the question about how to characterize zero sets of $A(\cH)$.
In the case of Drury-Arveson space, we see that they are intermediate between the $\Mult(H^2_d)$-totally null sets
and the classical $H^\infty(\bB_d)$-totally null sets, but not the same as the former. The latter are known to characterize the zero sets of $A(\bB_d)$.

\section{Relations among various interpolation properties}
\label{S:relations}

Let us introduce a bit of terminology to facilitate discussion.
\pagebreak[3]

\begin{defn}
A compact subset $E$ of $\partial\bB_d$ is said to be
\begin{enumerate}
    \item[\normalfont{(PI)}] a \emph{peak interpolation set} if the conclusion of Theorem~\ref{thm:Bishop} holds for scalar valued functions;
\item[\normalfont{(I)}] an \emph{interpolation set} if the restriction map of $A(\cH)$ into $C(E)$ is surjective;
\item[\normalfont{(P)}] a peak set if there exists a function $f\in A(\cH)$ such that $f|_E=1$, $|f(z)|<1$ for every $z \in \ol{\bB_d} \setminus E$, and $\|f\|_{\Mult(\mathcal{H})}=1$;
    \item[\normalfont{(PPI)}]
a \emph{Pick-peak interpolation set} if for every finite set $F \subset \bB_d$, the restriction map 
from $A(\cH)$ to $\Mult(\cH)\big|_F \oplus_\infty C(E)$ is a quotient map.
\end{enumerate}
\end{defn}
We also write (TN) to mean that $E$ is $\Mult(\mathcal{H})$-totally null.

Clearly (PI) implies both (P) and (I); and (PPI) implies (I).
Also (P) implies that $E$ is a zero set, and it implies the weaker notion that there is an $f\in A(\cH)$ such that 
$f|_E=1$, $|f(z)|<1$ for every $z \in \ol{\bB_d} \setminus E$ without the sharp norm control on $f$. 
However the results of the previous section show that a set $E$ with these weaker properties need not be totally null.

We can now summarize some of our main results as follows.

\begin{thm} \label{T:equivalence}
Let $\cH$ be a regular unitarily invariant space on $\bB_d$, and 
let $E \subset \partial \bB_d$  be compact. 
The following are equivalent:
  \begin{enumerate}
    \item[\normalfont{(TN)}] $E$ is $\Mult(\cH)$-totally null.
    \item[\normalfont{(PI)}] $E$ is a peak interpolation set.
    \item[\normalfont{(P)}] $E$ is a peak set.
    \item[\normalfont{(PPI)}] $E$ is a Pick-peak interpolation set.
  \end{enumerate}
Moreover these properties imply the corresponding complete versions of {\normalfont{(PI)}} and {\normalfont{(PPI)}} for matrix values functions.
Furthermore, if there exist non-empty $\Mult(\cH)$ totally null sets, then this is also equivalent to
  \begin{enumerate}
    \item[\normalfont{(I)}] $E$ is an interpolation set.
  \end{enumerate}
\end{thm}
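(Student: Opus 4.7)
The plan is to establish the cycle (TN) $\Rightarrow$ (PI) $\Rightarrow$ (P) $\Rightarrow$ (TN), handle (PPI) separately, and then treat (I) under the extra hypothesis. Almost all of the work has already been done earlier in the paper; the only genuinely new step is (P) $\Rightarrow$ (TN).

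The implication (TN) $\Rightarrow$ (PI), together with its matrix-valued version, is precisely Theorem~\ref{thm:Bishop}. The implication (PI) $\Rightarrow$ (P) is obtained by specializing the definition of (PI) to the constant function $g \equiv 1$, which produces exactly the peaking function required by (P).

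The crux is (P) $\Rightarrow$ (TN). Given a peaking function $f \in A(\cH)$ with $f|_E = 1$, $|f(z)| < 1$ on $\ol{\bB_d}\setminus E$ and $\|f\|_{\Mult(\cH)} = 1$, I would examine the sequence of powers $f^n$. Since $\bB_d \subset \ol{\bB_d}\setminus E$, the sequence $(f^n)$ tends to zero pointwise on $\bB_d$; boundedness in multiplier norm combined with the fact (recalled in Subsection~\ref{ss:RKHS}) that on bounded subsets of $\Mult(\cH)$ the weak-$*$ topology agrees with pointwise convergence on $\bB_d$ then gives $f^n \to 0$ in the weak-$*$ topology of $\Mult(\cH)$. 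For any positive $\Mult(\cH)$-Henkin measure $\mu$ concentrated on $E$, the weak-$*$ continuous extension of integration against $\mu$ yields
\[ \mu(E) = \int_{E} f^n \, d\mu = \int_{\partial \bB_d} f^n \, d\mu \xrightarrow{n \to \infty} 0, \]
so $\mu(E) = 0$, and Lemma~\ref{lem:TN_elementary} implies that $E$ is $\Mult(\cH)$-totally null.

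The equivalence (TN) $\Leftrightarrow$ (PPI), together with its matrix-valued version, is immediate: (TN) $\Rightarrow$ (PPI) is Theorem~\ref{thm:pick_peak_general}, while the converse is Proposition~\ref{P:PPI implies TN}. Finally, under the hypothesis that non-empty $\Mult(\cH)$-totally null sets exist, (TN) $\Rightarrow$ (PI) $\Rightarrow$ (I) is trivial since peak interpolation sets are interpolation sets, while (I) $\Rightarrow$ (TN) is Theorem~\ref{T:interpolation sets}. I expect no real obstacle: the only substantive step is (P) $\Rightarrow$ (TN), and the peaking function itself already supplies everything needed to run a standard dominated convergence argument against Henkin measures.
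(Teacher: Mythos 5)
Your proposal is correct and follows essentially the same route as the paper: (TN) $\Rightarrow$ (PI) via Theorem~\ref{thm:Bishop}, (PI) $\Rightarrow$ (P) trivially, (P) $\Rightarrow$ (TN) by the same argument with powers of the peaking function tending to zero weak-$*$ and tested against a Henkin measure concentrated on $E$, (TN) $\Leftrightarrow$ (PPI) via Theorem~\ref{thm:pick_peak_general} and Proposition~\ref{P:PPI implies TN}, and (I) $\Rightarrow$ (TN) via Theorem~\ref{T:interpolation sets}. The only cosmetic difference is your closing mention of dominated convergence, which is not actually needed since $f^n \equiv 1$ on $E$ and weak-$*$ continuity of the Henkin functional already gives the limit.
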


\begin{proof}
Theorem~\ref{thm:Bishop} shows that (TN) implies the complete version of (PI), which trivially implies (P).
Suppose that $E$ is a (P) set, and let $f\in A(\cH)$ satisfy $f|_E=1$, $|f(z)|<1$ for every $z \in \ol{\bB_d} \setminus E$, and $\|f\|_{\Mult(\mathcal{H})}=1$.
Let $\mu$ be a positive Henkin measure concentrated on $E$. 
We show that $\mu(E) = 0$.
The sequence $(f^n)$ is bounded in $A(\cH)$ and converges to $0$ pointwise on $\bB_d$.
Hence it converges to $0$ in the weak-$*$ topology of $\Mult(\cH)$. Thus, 
\begin{equation*}
  \mu(E) =  \int_{\partial \bB_d} f^n \, d\mu \xrightarrow{n \to \infty} 0,
\end{equation*}
as desired. So $E$ is $\Mult(\cH)$-totally null.

Theorem~\ref{thm:pick_peak_general} shows that (TN) implies the complete version of (PPI).
The converse is Proposition~\ref{P:PPI implies TN}.

The implication that (PPI) or (PI) implies (I) is trivial.
Finally, if there are $\Mult(\cH)$ totally null sets,  then Theorem~\ref{T:interpolation sets} shows that (I) implies (TN).
\end{proof}


\begin{bibdiv}
\begin{biblist}

\bib{AM02}{book}{
      author={Agler, Jim},
      author={M\textsuperscript{c}Carthy, John~E.},
       title={Pick interpolation and {H}ilbert function spaces},
      series={Graduate Studies in Mathematics},
      publisher={American Mathematical Society},
      address={Providence, RI},
      date={2002},
      volume={44},
}

\bib{AK06}{book}{
      author={Albiac, Fernando},
      author={Kalton, Nigel~J.},
       title={Topics in {B}anach space theory},
      series={Graduate Texts in Mathematics},
      publisher={Springer},
      address={New York},
      date={2006},
      volume={233},
}

\bib{AHM+17}{article}{
      author={Aleman, Alexandru},
      author={Hartz, Michael},
      author={M\textsuperscript{c}Carthy, John~E.},
      author={Richter, Stefan},
  title          = {Interpolating sequences in spaces with the complete {P}ick property},
  journal        = {Int. Math. Res. Not. IMRN},
  year           = {2019},
  volume         = {12},
  pages          = {3832--3854},
}

  \bib{AHM+18a}{incollection}{
      author={Aleman, Alexandru},
      author={Hartz, Michael},
      author={M\textsuperscript{c}Carthy, John~E.},
      author={Richter, Stefan},
      title     = {Radially weighted Besov spaces and the Pick property},
      booktitle = {Analysis of Operators on Function Spaces},
      publisher = {Springer International Publishing},
      year      = {2019},
      pages     = {29--61},
}

\bib{AHM+17a}{article}{
      author={Aleman, Alexandru},
      author={Hartz, Michael},
      author={M\textsuperscript{c}Carthy, John~E.},
      author={Richter, Stefan},
      title={The {S}mirnov class for spaces with the complete {P}ick  property},
      date={2017},
      journal={J. Lond. Math. Soc. (2)},
      volume={96},
      number={1},
      pages={228\ndash 242},
}

\bib{Ando73a}{article}{
  author     = {Ando, T.},
  title      = {Closed range theorems for convex sets and linear liftings},
  journal    = {Pacific J. Math.},
  year       = {1973},
  volume     = {44},
  pages      = {393--410},
  issn       = {0030-8730},
}

\bib{Arveson76}{book}{
      author={Arveson, William},
      title={An invitation to {$C\sp*$}-algebras},
      publisher={Springer-Verlag, New York-Heidelberg},
      date={1976},
      note={Graduate Texts in Mathematics, No. 39},
      review={\MR{0512360 (58 \#23621)}},
}

\bib{Arveson98}{article}{
     author={Arveson, William},
     title={Subalgebras of {$C\sp *$}-algebras. {III}. {M}ultivariable operator theory},
     date={1998},
     journal={Acta Math.},
     volume={181},
     number={2},
     pages={159\ndash 228},
}

\bib{BK77}{book}{
      author={Barbey, Klaus},
      author={K\"{o}nig, Heinz},
      title={Abstract analytic function theory and {H}ardy algebras},
      series={Lecture Notes in Mathematics, Vol. 593},
      publisher={Springer-Verlag, Berlin-New York},
      date={1977},
}

\bib{BHM17}{article}{
      author={Bickel, Kelly},
      author={Hartz, Michael},
      author={M\textsuperscript{c}Carthy, John~E.},
       title={A multiplier algebra functional calculus},
        year           = {2018},
  volume         = {370},
  number         = {12},
  pages          = {8467--8482},
     journal={Trans. Amer. Math. Soc.},
}

\bib{Bishop62}{article}{
      author={Bishop, Errett},
      title={A general {R}udin-{C}arleson theorem},
      date={1962},
      ISSN={0002-9939},
      journal={Proc. Amer. Math. Soc.},
      volume={13},
      pages={140\ndash 143},
}

\bib{Blackadar06}{book}{
      author={Blackadar, B.},
       title={Operator algebras},
      series={Encyclopaedia of Mathematical Sciences},
   publisher={Springer-Verlag},
     address={Berlin},
        date={2006},
      volume={122},
        ISBN={978-3-540-28486-4; 3-540-28486-9},
        note={Theory of $C{\sp{*}}$-algebras and von Neumann algebras, Operator
  Algebras and Non-commutative Geometry, III},
}

\bib{Blecher13}{article}{
  author     = {Blecher, David P.},
  title      = {Noncommutative peak interpolation revisited},
  journal    = {Bull. Lond. Math. Soc.},
  year       = {2013},
  volume     = {45},
  number     = {5},
  pages      = {1100--1106},
  issn       = {0024-6093},
}

\bib{BL04}{book}{
      author={Blecher, David~P.},
      author={Le~Merdy, Christian},
      title={Operator algebras and their modules---an operator space  approach},
      series={London Mathematical Society Monographs. New Series},
      publisher={The Clarendon Press, Oxford University Press, Oxford},
      date={2004},
      volume={30},
      note={Oxford Science Publications},
}

\bib{BO08}{book}{
  Author                   = {Brown, Nathanial P.},
  Author                   = {Ozawa, Narutaka},
  Title                    = {{$C^*$}-algebras and finite-dimensional approximations},
  Publisher                = {American Mathematical Society, Providence, RI},
  Year                     = {2008},
  Series                   = {Graduate Studies in Mathematics},
  Volume                   = {88},
}

\bib{Carleson57}{article}{
      author={Carleson, Lennart},
      title={Representations of continuous functions},
      date={1957},
      journal={Math. Z.},
      volume={66},
      pages={447\ndash 451},
}

\bib{CH20}{article}{
author      = {Chalmoukis, Nikolaos},
author      = {Hartz, Michael},
  title       = {Totally null sets and capacity in Dirichlet type spaces},
  journal     = {arXiv:2007.01569},
  year        = {2020},
}

\bib{CE77}{article}{
  author     = {Choi, Man Duen},
  author = {Effros, Edward G.},
  title      = {Lifting problems and the cohomology of {$C\sp*$}-algebras},
  journal    = {Canadian J. Math.},
  year       = {1977},
  volume     = {29},
  number     = {5},
  pages      = {1092--1111},
}

\bib{CD16}{article}{
      author={Clou\^atre, Rapha\"el},
      author={Davidson, Kenneth~R.},
      title={Duality, convexity and peak interpolation in the  {D}rury-{A}rveson space},
      date={2016},
      journal={Adv. Math.},
      volume={295},
      pages={90\ndash 149},
}

\bib{CD16b}{article}{
      author={Clou\^atre, Rapha\"el},
      author={Davidson, Kenneth~R.},
      TITLE = {Absolute continuity for commuting row contractions},
     JOURNAL = {J. Funct. Anal.},
     VOLUME = {271},
     YEAR = {2016},
     NUMBER = {3},
     PAGES = {620--641},
}

\bib{CD18}{article}{
      author={Clou\^atre, Rapha\"el},
      author={Davidson, Kenneth~R.},
      title={Ideals in a multiplier algebra on the ball},
      date={2018},
      journal={Trans. Amer. Math. Soc.},
      volume={370},
      number={3},
      pages={1509\ndash 1527},
}

\bib{Conway90}{book}{
  Title                    = {A course in functional analysis},
  Author                   = {Conway, John B.},
  Publisher                = {Springer-Verlag},
  Year                     = {1990},
  Address                  = {New York},
  Edition                  = {Second},
  Series                   = {Graduate Texts in Mathematics},
  Volume                   = {96},
}

\bib{CV95}{article}{
  author     = {Cohn, W. S},
  author     = {Verbitsky, I. E.},
  title      = {Nonlinear potential theory on the ball, with applications to exceptional and boundary interpolation sets},
  journal    = {Michigan Math. J.},
  year       = {1995},
  volume     = {42},
  number     = {1},
  pages      = {79--97},
}

\bib{Davidson96}{book}{
      author={Davidson, Kenneth~R.},
       title={{$C\sp *$}-algebras by example},
      series={Fields Institute Monographs},
      publisher={American Mathematical Society},
      address={Providence, RI},
      date={1996},
      volume={6},
}

\bib{DE12}{incollection}{
  Author  = {Douglas, Ronald G. },
  Author  = {Eschmeier, J{\"o}rg},
  Title      = {Spectral inclusion theorems},
  Booktitle = {Mathematical methods in systems, optimization, and control},
  Publisher= {Birkh\"auser/Springer Basel AG, Basel},
  Year      = {2012},
  Pages   = {113--128},
  Series   = {Oper. Theory Adv. Appl.},
  Volume = {222},
}

\bib{Drury78}{article}{
  Title                    = {A generalization of von {N}eumann's inequality to the complex ball},
  Author                   = {Drury, S. W.},
  Journal                  = {Proc. Amer. Math. Soc.},
  Year                     = {1978},
  Number                   = {3},
  Pages                    = {300--304},
  Volume                   = {68},
}

\bib{Dugundji51}{article}{
      author={Dugundji, J.},
      title={An extension of {T}ietze's theorem},
      date={1951},
      journal={Pacific J. Math.},
      volume={1},
      pages={353\ndash 367},
}

\bib{ER90}{article}{
  author     = {Effros, Edward G.},
  author     = {Ruan, Zhong-Jin},
  title      = {On nonselfadjoint operator algebras},
  journal    = {Proc. Amer. Math. Soc.},
  year       = {1990},
  volume     = {110},
  number     = {4},
  pages      = {915--922},
}

\bib{ER94}{article}{
  author     = {Effros, Edward G.},
  author = {Ruan, Zhong-Jin},
  title      = {Mapping spaces and liftings for operator spaces},
  journal    = {Proc. London Math. Soc. (3)},
  year       = {1994},
  volume     = {69},
  number     = {1},
  pages      = {171--197},
  issn       = {0024-6115},
}

\bib{EKM+14}{book}{
      author={El-Fallah, Omar},
      author={Kellay, Karim},
      author={Mashreghi, Javad},
      author={Ransford, Thomas},
      title={A primer on the {D}irichlet space},
      series={Cambridge Tracts in Mathematics},
      publisher={Cambridge University Press, Cambridge},
      date={2014},
      volume={203},
}

\bib{Gamelin69}{book}{
      author={Gamelin, Theodore~W.},
      title={Uniform algebras},
      publisher={Prentice-Hall, Inc., Englewood Cliffs, N. J.},
      date={1969},
}

\bib{GHX04}{article}{
      author={Guo, Kunyu},
      author={Hu, Junyun},
      author={Xu, Xianmin},
      title={{Toeplitz} algebras, subnormal tuples and rigidity on reproducing  {$\mathbf C[z_1,\dots,z_d]$}-modules},
      date={2004},
      journal={J. Funct. Anal.},
      volume={210},
      number={1},
      pages={214\ndash 247},
}

\bib{HWW93}{book}{
  author = {Harmand, P.},
  author = {Werner, D.},
  author = {Werner, W.},
  title      = {{$M$}-ideals in {B}anach spaces and {B}anach algebras},
  publisher  = {Springer-Verlag, Berlin},
  year       = {1993},
  volume     = {1547},
  series     = {Lecture Notes in Mathematics},
  isbn       = {3-540-56814-X},
  pages      = {viii+387},
}

\bib{Hartz17}{article}{
      author={Hartz, Michael},
      title={Henkin measures for the {D}rury-{A}rveson space},
      date={2018},
      journal={Michigan Math. J.},
      volume={67},
      number={4},
      pages={815\ndash 826},
}

\bib{Hartz17a}{article}{
  Title= {On the isomorphism problem for multiplier algebras of {N}evanlinna-{P}ick spaces},
  author={Hartz, Michael},
  Journal= {Canad. J. Math.},
  Year= {2017},
  Number= {1},
  Pages= {54--106},
  Volume= {69},
}

\bib{Hedenmalm89}{article}{
    AUTHOR = {Hedenmalm, H\aa kan},
    TITLE = {Closed ideals in the ball algebra},
    JOURNAL = {Bull. London Math. Soc.},
    VOLUME = {21},
     YEAR = {1989},
    NUMBER = {5},
    PAGES = {469--474},
}

\bib{Izzo18}{article}{
      author={Izzo, Alexander~J.},
      title={Pick and peak interpolation},
      date={2018},
      journal={Proc. Amer. Math. Soc.},
      volume={146},
      number={2},
      pages={717\ndash 721},
}

\bib{Katznelson04}{book}{
       author={Katznelson, Yitzhak},
       title={An introduction to harmonic analysis},
       edition={Third ed.},
       series={Cambridge Mathematical Library},
       publisher={Cambridge University Press, Cambridge},
       date={2004},
}

\bib{KS69}{article}{
      author={K\"{o}nig, Heinz},
      author={Seever, G.~L.},
      title={The abstract {F}. and {M}. {R}iesz theorem},
      date={1969},
      journal={Duke Math. J.},
      volume={36},
      pages={791\ndash 797},
  }

  \bib{Mueller07}{book}{
  Title                    = {Spectral theory of linear operators and spectral systems in {B}anach algebras},
  Author                   = {M{\"u}ller, Vladimir},
  Publisher                = {Birkh\"auser Verlag},
  Year                     = {2007},

  Address                  = {Basel},
  Edition                  = {Second},
  Series                   = {Operator Theory: Advances and Applications},
  Volume                   = {139},
}

\bib{Paulsen02}{book}{
      author={Paulsen, Vern},
      title={Completely bounded maps and operator algebras},
      series={Cambridge Studies in Advanced Mathematics},
      publisher={Cambridge University Press},
      address={Cambridge},
      date={2002},
      volume={78},
}

\bib{PR16}{book}{
      author={Paulsen, Vern~I.},
      author={Raghupathi, Mrinal},
      title={An introduction to the theory of reproducing kernel {H}ilbert  spaces},
      series={Cambridge Studies in Advanced Mathematics},
      publisher={Cambridge University Press},
      date={2016},
      volume={152},
}

\bib{Pelczynski64}{article}{
  author     = {Pe\l czy\'{n}ski, A.},
  title      = {On simultaneous extension of continuous functions. {A} generalization of theorems of {R}udin-{C}arleson and {B}ishop},
  journal    = {Studia Math.},
  year       = {1964},
  volume     = {24},
  pages      = {285--304},
  issn       = {0039-3223},
}

\bib{Pick15}{article}{
       author={Pick, Georg},
       title={\"{U}ber die {B}eschr\"ankungen analytischer {F}unktionen, welche durch vor\-ge\-ge\-be\-ne {F}unktionswerte bewirkt werden},
       date={1915},
       journal={Math. Ann.},
       volume={77},
       number={1},
       pages={7\ndash 23},
}

\bib{Pisier03}{book}{
      author={Pisier, Gilles},
      title={Introduction to operator space theory},
      series={London Mathematical Society Lecture Note Series},
      publisher={Cambridge University Press},
      address={Cambridge},
      date={2003},
      volume={294},
}

\bib{Rudin56}{article}{
      author={Rudin, Walter},
      title={Boundary values of continuous analytic functions},
      date={1956},
      journal={Proc. Amer. Math. Soc.},
      volume={7},
      pages={808\ndash 811},
}

\bib{Rudin57}{article}{
     AUTHOR = {Rudin, Walter},
     TITLE = {The closed ideals in an algebra of analytic functions},
    JOURNAL = {Canadian J. Math.},
    VOLUME = {9},
     YEAR = {1957},
     PAGES = {426--434},
}

\bib{Rudin91}{book}{
      author={Rudin, Walter},
      title={Functional analysis},
      edition={Second},
      series={International Series in Pure and Applied Mathematics},
      publisher={McGraw-Hill Inc.},
      address={New York},
      date={1991},
}

\bib{Rudin08}{book}{
      author={Rudin, Walter},
      title={Function theory in the unit ball of {$\mathbb C\sp n$}},
      series={Classics in Mathematics},
      publisher={Springer-Verlag},
      address={Berlin},
      date={2008},
      note={Reprint of the 1980 edition},
}

\bib{Sarason67}{article}{
      author={Sarason, Donald},
      title={Generalized interpolation in {$H^{\infty }$}},
      date={1967},
      journal={Trans. Amer. Math. Soc.},
      volume={127},
      pages={179\ndash 203},
}

\bib{Shields74}{incollection}{
      author={Shields, Allen~L.},
      title={Weighted shift operators and analytic function theory},
      date={1974},
      booktitle={Topics in operator theory},
      publisher={Amer. Math. Soc.},
      address={Providence, R.I.},
      pages={49\ndash 128. Math. Surveys, No. 13},
}

\bib{Stout71}{book}{
    AUTHOR = {Stout, Edgar Lee},
    TITLE = {The theory of uniform algebras},
    PUBLISHER = {Bogden \& Quigley, Inc., Tarrytown-on-Hudson, N. Y.},
    YEAR = {1971},
    PAGES = {x+509},
}

\bib{TakesakiI02}{book}{
      author={Takesaki, M.},
      title={Theory of operator algebras. {I}},
      series={Encyclopaedia of Mathematical Sciences},
      publisher={Springer-Verlag, Berlin},
      date={2002},
      volume={124},
      note={Reprint of the first (1979) edition, Operator Algebras and  Non-commutative Geometry, 5},
}

\bib{Wojtaszczyk91}{book}{
  author     = {Wojtaszczyk, P.},
  title      = {Banach spaces for analysts},
  publisher  = {Cambridge University Press, Cambridge},
  year       = {1991},
  volume     = {25},
  series     = {Cambridge Studies in Advanced Mathematics},
  pages      = {xiv+382},
}

\end{biblist}
\end{bibdiv}

\end{document}